\documentclass[11pt]{amsart}

\usepackage[T1]{fontenc}
\usepackage{lmodern}
\usepackage{amsmath,amssymb,mathtools}
\usepackage{booktabs}
\usepackage{array}
\usepackage{longtable}
\usepackage{graphicx}
\usepackage[table]{xcolor}
\usepackage{float}
\usepackage{enumitem}
\usepackage[hidelinks]{hyperref}
\usepackage{microtype}
\usepackage{placeins}

\makeatletter
\def\section{\@startsection{section}{1}%
  \z@{1.0\linespacing\@plus\linespacing}{.65\linespacing}%
  {\normalfont\Large\bfseries}}
\makeatother

\numberwithin{equation}{section}

\newtheorem{theorem}{Theorem}[section]
\newtheorem{proposition}[theorem]{Proposition}
\newtheorem{lemma}[theorem]{Lemma}
\newtheorem{corollary}[theorem]{Corollary}
\theoremstyle{definition}
\newtheorem{definition}[theorem]{Definition}
\newtheorem{example}[theorem]{Example}
\newtheorem{question}[theorem]{Question}
\theoremstyle{remark}
\newtheorem{remark}[theorem]{Remark}

\DeclareMathOperator{\supp}{supp}
\newcommand{\sel}[1]{\textcolor{blue!65!black}{\mathbf{#1}}}
\newcommand{\oeislink}[2]{%
  \begingroup
  \setlength{\fboxsep}{1.5pt}%
  \colorbox{orange!18}{\href{#1}{\textcolor{black}{\strut\textbf{#2}}}}%
  \endgroup
}

\title[Equilateral completion in floretion coordinates]{Equilateral completion in floretion triangular coordinates:\\
locality, product points, and reflection symmetry}
\author{Creighton Dement}
\address{Independent researcher}
\email{floretionguru@gmail.com}
\date{}
\hypersetup{
  pdftitle={Equilateral Completion in Floretion Triangular Coordinates: Locality, Product Points, and Reflection Symmetry},
  pdfauthor={Creighton Dement}
}
\subjclass[2020]{Primary 52C20; Secondary 05B30, 05E18, 20D15, 51M20}
\keywords{equilateral triangles, triangular lattices, floretions, quaternions, completion, local cyclic actions, product-centered triangles, main-axis geometry, reflection symmetry, anticommutation, scalar vertex-sum squares, Fibonacci numbers, linear recurrences}

\begin{document}
\begin{abstract}
We study unordered triples of order-\(n\) floretion base vectors whose tile
centroids form nondegenerate equilateral triangles.  A scaled integer centroid
map turns Euclidean completion into exact arithmetic on a triangular lattice,
and a residue obstruction modulo \(3\) shows that every equilateral centroid
triangle uses three tiles of one orientation.  Combined with finite
triangular-lattice completion counts, this gives
\[
  |E_n|=\frac{4^n(4^n-1)}{12}.
\]
For synchronized local \(\gamma\)-cycles,
\[
  |L_n|=\frac{7^n-4^n}{3},
  \qquad
  \frac{|L_n|}{|E_n|}\sim4\left(\frac7{16}\right)^n,
\]
while on the no-\(e\) support \(S_n=\{i,j,k\}^n\) locality is exhaustive and
\[
  |E_n^S|=|L_n^S|=(2^n-1)3^{n-1}.
\]
The union of the three main axes supports exactly
\[
  |E_n^{\mathrm{ax}}|=4^{n-1}+2^n-2
\]
equilateral triangles, split into the branches \(x=y=z\) and
\(x+y+z=0\).  For \(T\in E_n\), the unsigned vertex product defines a product point
\(C_T\); a digitwise parity criterion characterizes \(C_T=Q_T\) on local
cycles and yields Fibonacci subfamilies.  Multiplication-generation is
equivalent to \(p(T)=e_n\), hence \(C_T=0\); locally this gives exactly the
nontrivial global \(\gamma\)-orbits, and exact enumeration through order \(6\)
finds no nonlocal example.

Retaining the signs discarded by the unsigned product gives a second
classifier: a triangle has scalar vertex-sum square exactly when its three
vertices pairwise anticommute.  For local cycles this occurs exactly when
\(|S|\) is odd, giving
\[
  |\mathrm{AC}_n\cap L_n|=\frac{7^n-1}{6},
\]
while nonlocal pairwise-anticommuting examples already occur in order \(3\).
\end{abstract}

\maketitle

\section{Introduction}

The floretion coordinate model encodes the positive basis of
\(\mathbb H^{\otimes n}\) by words of length \(n\) in
\[
  \{i,j,k,e\}.
\]
The same \(4^n\) words label the tiles obtained after \(n\) stages of the
recursive midpoint subdivision of an equilateral triangle.  The associated
centroid map \(P\), its compatibility with that subdivision, and its
digitwise \(S_3\)-equivariance were developed in the preceding paper
\cite{Dement2026}.

The present paper asks a different, finite-geometric question.  For three
distinct order-\(n\) base words \(b_1,b_2,b_3\), when do
\[
  P(b_1),\quad P(b_2),\quad P(b_3)
\]
form a nondegenerate equilateral triangle?  Let \(E_n\) denote the set of
all such unordered triples.  The first main step is to rewrite \(P\) in
scaled integer triangular coordinates.  The two Euclidean completions of a
pair then become explicit integer formulas, while a residue calculation
modulo \(3\) separates the two tile orientations.  This gives an intrinsic
proof that an equilateral centroid triangle cannot mix orientations.  The
same conclusion is independently explained by the three-color
refinement-lattice theorem of Ivrissimtzis, Dodgson, and Sabin
\cite{IDS2004}.  The two orientation classes are triangular point sets with
\(2^n\) and \(2^n-1\) rows, so the finite completion formulas of Brouwer,
Joe, Noble, and Noble \cite{Brouwer2024} give
\[
  |E_n|=\frac{4^n(4^n-1)}{12}.
\]

The local cyclic construction from \cite[Theorem~4.8]{Dement2026}
supplies a distinguished subclass.  Following the canonical convention of
that theorem, local selections satisfy
\[
  \varnothing\ne S\subseteq\supp(b).
\]
The selected set is visible from the triangle itself, the local classes are
disjoint, and
\[
  |L_n|=\frac{7^n-4^n}{3},
  \qquad
  |E_n\setminus L_n|
   =\frac{16^n+3\cdot4^n-4\cdot7^n}{12}.
\]
The local class also has a natural group-action description: its triangles
are exactly the orbits of a free synchronized \(C_3\)-action on local-cycle
representatives.  This gives a second route to the local count and packages
several later divisions by \(3\) into one orbit construction.
Thus synchronized local cycles, although intrinsic to the coordinate model,
form an exponentially vanishing fraction of all equilateral centroid
triangles.

A second theme compares multiplication with Euclidean geometry.  Any two
positive basis words commute or anticommute, so permuting three factors
changes at most the global sign.  Hence the following unsigned product is
well defined.

\begin{definition}[Product point and product-centered triangle]
For \(T=\{b_1,b_2,b_3\}\in E_n\), let
\[
  p(T)=u(b_1b_2b_3),\qquad
  C_T=P(p(T)),\qquad
  Q_T=\frac{P(b_1)+P(b_2)+P(b_3)}{3},
\]
where \(u(\pm b)=b\) removes the global sign.  We call \(C_T\) the
\emph{product point} of \(T\).  The triangle is \emph{product-centered} if
\(C_T=Q_T\), and we write
\[
  P_n=\{T\in E_n:C_T=Q_T\}.
\]
\end{definition}

Theorem~4.8 of the preceding paper is the central structural result
governing synchronized local cycles, their products, and their Euclidean
centers, and it remains a principal input in the present paper.  In
particular, we shall see that the rule
\medskip
\begingroup
\setlength{\fboxsep}{4pt}
\begin{center}
\fbox{\parbox{0.88\linewidth}{\centering\small\itshape
Every unselected noncentral coordinate must have an even number of preceding selected coordinates.}}
\end{center}
\endgroup
\medskip
established in \cite[Theorem~4.8(4)]{Dement2026}, can be viewed as a
two-state machine that brings Fibonacci numbers naturally into the
enumeration.  Together with the unsigned product identity coming from
\cite[Theorem~4.8(2)]{Dement2026}, it yields a two-state recurrence for
all product-centered local triangles.  On the no-\(e\) support
\(S_n=\{i,j,k\}^n\), the absence of the zero residue modulo \(2\) gives a
second rigidity phenomenon: the last digit of an oriented equilateral
completion is forced to be constant or to follow the cyclic direction
determined by the global orientation.  Iterating the last-digit
decomposition proves
\[
  E_n^S=L_n^S,
  \qquad
  |E_n^S|=(2^n-1)3^{n-1}.
\]
Within this support, admissible product-centered selection patterns are
binary words assembled from \(0\) and \(11\), with a possible final
unpaired \(1\), producing the Fibonacci count
\[
  (F_{n+2}-1)3^{n-1}.
\]
Reflection symmetry gives a second Fibonacci family.  For one fixed main
axis, the local triangles whose product point lies on the axis number
\((5^n-2^n)/3\), while the product-centered local triangles symmetric about
the whole axis number \(F_{2n+2}-2^n\).

The same three axes also support a closed equilateral subfamily.  Every
axis-supported equilateral triangle has axis coordinates satisfying either
\(x=y=z\) or \(x+y+z=0\), and Section~\ref{sec:reflection} proves
\[
  |E_n^{\mathrm{ax}}|=4^{n-1}+2^n-2.
\]

We also ask when
\[
  \{b,c,u(bc)\}
\]
is itself an equilateral centroid triple.  Multiplication-generation turns
out to have a simple product-point characterization:
\[
  T\text{ is multiplication-generated}
  \quad\Longleftrightarrow\quad
  p(T)=e_n
  \quad\Longleftrightarrow\quad
  C_T=0.
\]
Within the local class this forces the selected set to be the full support,
so the multiplication-generated local triangles are exactly the nontrivial
global \(\gamma\)-orbits.  There are \((4^n-1)/3\) such orbits.  Exact
enumeration through order \(6\) finds no nonlocal multiplication-generated
examples.  Thus the remaining question in Section~9 is precisely whether
multiplication-generation can occur outside the local class.

Section~10 then retains the central sign discarded by \(u\).  For
\(T\in E_n\), the square \(\Theta(T)^2\) of the vertex sum is scalar
exactly when the three vertices pairwise anticommute.  For a local cycle
\(T(b,S)\), this occurs exactly when \(|S|\) is odd, giving
\[
  |\mathrm{AC}_n\cap L_n|=\frac{7^n-1}{6};
\]
nonlocal examples already occur in order \(3\).

\medskip
\noindent\textbf{Relation to the preceding paper.}
The base-vector notation, multiplication, recursive centroid map,
orientation rule, digitwise \(S_3\)-equivariance, reflection
anti-automorphisms, centralizer decomposition, and local cyclic construction
originate in \cite{Dement2026}.  The present paper restates the coordinate
data needed for Sections~2--4.  Theorem~4.8 of the preceding paper uses the
same canonical requirement \(S\subseteq\supp(b)\).  Its parts (2) and (4)
give, respectively, the signed product identity and the center-coincidence
criterion that drive the product-point enumerations in Sections~6--8.

For finite-order experimentation, an online floretion multiplier and
visualization tool is available at \cite{Calculator}.  Readers who prefer to
begin with concrete cases may consult Appendix~B first: it collects small
examples, mainly in order three, of the principal triangle classes and gives
the corresponding calculator input.  The order-two signed floretion group
was previously studied in the original notation by Mathar \cite{Mathar2010}.

\section{Base vectors, centroids, and orientation}

\begin{definition}[Base vectors]
For \(n\ge1\), let
\[
  \Delta_n=\{i,j,k,e\}^n.
\]
The signed basis group is
\[
  F_n=\{\pm b:b\in\Delta_n\},
\]
with identity \(e_n=ee\cdots e\).  Multiplication is coordinatewise
quaternion multiplication, with the local signs collected into one global
sign.
\end{definition}

The symbolic alphabet corresponds to the octal notation of
\cite{Dement2026} by
\[
  1\leftrightarrow i,\qquad
  2\leftrightarrow j,\qquad
  4\leftrightarrow k,\qquad
  7\leftrightarrow e.
\]
The octal alphabet is the natural encoding of floretion base vectors in
the broader framework.  It gives the positive base vectors a canonical
order and supports efficient bitwise multiplication.  It also makes
available arithmetic notions attached to that ordering, such as ordinary
and wrapped distances between base vectors.  These features are not used
in the arguments developed in the present paper, so the symbolic notation
\(i,j,k,e\) will be used for readability.

Retain the geometric convention
\[
\begin{aligned}
  v(i)&=(\cos210^\circ,\sin210^\circ) &&\text{(lower left)},\\
  v(j)&=(\cos 90^\circ,\sin 90^\circ) &&\text{(top)},\\
  v(k)&=(\cos330^\circ,\sin330^\circ) &&\text{(lower right)},\\
  v(e)&=(0,0).
\end{aligned}
\]
Thus \(v(i)+v(j)+v(k)=0\).  Fix \(d_1>0\) and put
\(d_r=d_1/2^{r-1}\).

\begin{definition}[Centroid map]
For \(b=b_1\cdots b_n\in\Delta_n\), define
\[
  P(b)=\sum_{r=1}^n \sigma_r(b)d_rv(b_r),
  \qquad
  \sigma_r(b)=(-1)^{\#\{s<r:b_s=e\}}.
\]
\end{definition}

This is the centroid of the tile labelled by \(b\) in the recursive
triangular tiling \cite{Dement2026}.  Let
\[
  \nu(b)=\#\{r:b_r\ne e\}.
\]
The tile is upward if and only if
\(\nu(b)\equiv n\pmod2\).  Since each noncentral quaternionic basis symbol
squares to \(-e\),
\begin{equation}\label{eq:square-orientation}
  b^2=(-1)^{\nu(b)}e_n.
\end{equation}
Thus, at fixed order, two tiles have the same orientation exactly when their
basis squares have the same sign.

Put \(N=2^n\).  The order-\(n\) tiling has
\begin{equation}\label{eq:updown-count}
  U_n=\frac{N(N+1)}2
  \quad\text{upward tiles},\qquad
  D_n=\frac{N(N-1)}2
  \quad\text{downward tiles}.
\end{equation}
The upward centroids form a triangular point set with \(N\) rows and the
downward centroids a translated triangular point set with \(N-1\) rows.

\subsection{Scaled integer centroid coordinates}

Put \(u=v(i)\) and \(w=v(j)\).  The angle between \(u\) and \(w\) is
\(120^\circ\), and \(v(k)=-u-w\).  Define
\[
  \lambda(i)=(1,0),\qquad
  \lambda(j)=(0,1),\qquad
  \lambda(k)=(-1,-1),\qquad
  \lambda(e)=(0,0).
\]

\begin{definition}[Scaled integer centroid-coordinate map]
For \(b=b_1\cdots b_n\in\Delta_n\), define
\begin{equation}\label{eq:Lambda}
  \Lambda_n(b)
   =\sum_{r=1}^n \sigma_r(b)\,2^{\,n-r}\lambda(b_r)\in\mathbb Z^2.
\end{equation}
If \(\Lambda_n(b)=(X_n(b),Y_n(b))\), these are the scaled integer centroid
coordinates of \(b\).
\end{definition}

\begin{lemma}[Integer realization of the centroid map]
For every \(b\in\Delta_n\),
\begin{equation}\label{eq:integer-realization}
  \frac{2^{n-1}}{d_1}P(b)=X_n(b)u+Y_n(b)w.
\end{equation}
In particular, \(\Lambda_n:\Delta_n\to\mathbb Z^2\) is injective.
\end{lemma}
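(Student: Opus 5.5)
The identity \eqref{eq:integer-realization} is linear in the digit contributions, so I would check it one digit at a time and then deduce injectivity from the tiling.

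\textbf{Step 1: one digit at a time.} For each symbol \(x\in\{i,j,k,e\}\) I would verify
\[
  v(x)=\lambda(x)_1\,u+\lambda(x)_2\,w .
\]
For \(x=i\) and \(x=j\) this is the definition of \(u\) and \(w\). For \(x=k\) it is the relation \(v(k)=-u-w\), which comes from \(v(i)+v(j)+v(k)=0\). For \(x=e\) both sides are zero.

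\textbf{Step 2: the scale factors.} I would multiply the definition of \(P(b)\) by \(2^{n-1}/d_1\). Since \(d_r=d_1/2^{r-1}\), the \(r\)th term gets coefficient
\[
  \frac{2^{n-1}}{d_1}\,d_r=2^{\,n-r}.
\]
The signs \(\sigma_r(b)\) are the same in \(P\) and in \(\Lambda_n\). Summing the identities of Step 1 with weights \(\sigma_r(b)2^{n-r}\) gives exactly \(X_n(b)u+Y_n(b)w\), which is \eqref{eq:integer-realization}.

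\textbf{Step 3: injectivity.} The vectors \(u\) and \(w\) are linearly independent, since they meet at \(120^\circ\). Hence the right side of \eqref{eq:integer-realization} determines \((X_n(b),Y_n(b))\), and so \(\Lambda_n(b)\) determines \(P(b)\). It therefore suffices to know that \(P\) is injective on \(\Delta_n\). This holds because \(P(b)\) is the centroid of the tile labelled \(b\) in the recursive tiling of \cite{Dement2026}, distinct words label distinct tiles, and distinct tiles have distinct centroids.

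\textbf{Main obstacle.} The only point needing care is whether the injectivity of \(P\) may simply be quoted from \cite{Dement2026}. If a self-contained argument is wanted, I would use induction on \(n\).

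Write \(b=b_1b'\). Then
\[
  \Lambda_n(b)=2^{n-1}\lambda(b_1)\pm\Lambda_{n-1}(b'),
\]
where the sign is \(-\) exactly when \(b_1=e\). By induction, \(\Lambda_{n-1}\) is injective on \(\Delta_{n-1}\). One then shows that the four images corresponding to the four choices of \(b_1\) are pairwise disjoint. They lie in disjoint scaled subtriangles, and when \(b_1=e\) the central subtriangle is the point reflection of the others. Distinguishing these images by a bounding region, or by a residue argument, would complete the proof.
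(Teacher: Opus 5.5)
Your proposal is correct and follows essentially the same route as the paper: substitute \(v(b_r)=\lambda_1(b_r)u+\lambda_2(b_r)w\) together with the weights \(\tfrac{2^{n-1}}{d_1}d_r=2^{n-r}\), and then deduce injectivity of \(\Lambda_n\) from the fact that distinct words label distinct tiles, which have distinct centroids. The linear independence of \(u,w\) is not needed for that last step, since by the identity \(\Lambda_n(b)=\Lambda_n(c)\) already forces \(P(b)=P(c)\); your optional inductive argument for the injectivity of \(P\) goes beyond the paper, which simply quotes the tiling fact.
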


\begin{proof}
Multiplying the centroid formula by \(2^{n-1}/d_1\) gives
\[
  \frac{2^{n-1}}{d_1}P(b)
   =\sum_{r=1}^n\sigma_r(b)2^{\,n-r}v(b_r).
\]
Writing \(\lambda(b_r)=(\lambda_1(b_r),\lambda_2(b_r))\), we have
\[
  v(b_r)=\lambda_1(b_r)u+\lambda_2(b_r)w.
\]
Hence the right-hand side becomes
\[
  \sum_{r=1}^n\sigma_r(b)2^{\,n-r}\lambda_1(b_r)\,u
  +\sum_{r=1}^n\sigma_r(b)2^{\,n-r}\lambda_2(b_r)\,w
  =X_n(b)u+Y_n(b)w,
\]
which is \eqref{eq:integer-realization}; this is exactly the coordinate pair
recorded by \eqref{eq:Lambda}.  Distinct words label distinct tiles of the
order-\(n\) triangulation, whose interior centroids are distinct; hence
\(\Lambda_n\) is injective.
\end{proof}

\begin{remark}[The triangular-lattice metric]
The coordinate basis \((u,w)\) is oblique.  Its induced quadratic form is
\begin{equation}\label{eq:qform}
  q(x,y)=x^2-xy+y^2.
\end{equation}
Thus \(q(1,0)=q(0,1)=q(-1,-1)=1\), preserving the symmetry among the
three noncentral digits.  More generally,
\[
  \|P(b)-P(c)\|^2
   =\left(\frac{d_1}{2^{n-1}}\right)^2
      q\!\left(\Lambda_n(b)-\Lambda_n(c)\right).
\]
\end{remark}

\begin{lemma}[Orientation residue classes]\label{lem:orientation-residues}
Let \(\Lambda_n(b)=(X,Y)\) and \(N=2^n\).  Then
\begin{equation}\label{eq:orientation-residues}
  X+Y\equiv
  \begin{cases}
    N-1\pmod3,&\text{if the tile of \(b\) is upward},\\
    N-2\pmod3,&\text{if the tile of \(b\) is downward}.
  \end{cases}
\end{equation}
Consequently the third residue class \(X+Y\equiv N\pmod3\) contains no
order-\(n\) tile centroids.
\end{lemma}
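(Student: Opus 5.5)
The plan is to compute \(X+Y\bmod 3\) directly from \eqref{eq:Lambda}. Since \(\lambda(i),\lambda(j),\lambda(k),\lambda(e)\) have coordinate sums \(1,1,-2,0\), each noncentral digit contributes \(1\) modulo \(3\) and the digit \(e\) contributes \(0\). Using \(2\equiv-1\pmod3\), we get
\[
  X+Y\equiv\sum_{r:\,b_r\ne e}\sigma_r(b)\,(-1)^{n-r}\pmod 3 .
\]
So the task reduces to evaluating a signed sum of \(\pm1\)'s modulo \(3\) and comparing it with \(N-1\equiv(-1)^n-1\) and \(N-2\equiv(-1)^n+1\).

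I would do this by induction on \(n\), reading the word from the end. Write \(b=b'b_n\) with \(b'\in\Delta_{n-1}\). Then \(\Lambda_n(b)=2\Lambda_{n-1}(b')+\sigma_n(b)\lambda(b_n)\), so
\[
  s_n(b)\equiv -s_{n-1}(b')+\sigma_n(b)\,[b_n\ne e]\pmod 3,
\]
where \(s\) denotes \(X+Y\).

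The base case \(n=1\) is a direct check. For \(i,j,k\) we get \(s=1\) or \(-2\), so \(s\equiv1=N-1\), and these tiles are upward since \(\nu=1\equiv n\). For \(e\) we get \(s=0=N-2\), and this tile is downward.

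For the inductive step, write \(N'=2^{n-1}\), so that \(N=2N'\equiv-N'\). Let \(c=\#\{s<n:b_s=e\}\), so \(\sigma_n=(-1)^c\), and note \(\nu(b')=(n-1)-c\). The orientation of \(b'\) is therefore determined by the parity of \(c\): \(b'\) is upward iff \(c\) is even.

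\textbf{Case \(b_n=e\).} Then \(\nu(b)=\nu(b')\), so the orientation flips relative to \(b'\), and \(s_n\equiv-s_{n-1}\).
\begin{itemize}
\item If \(b'\) is upward, then \(s_n\equiv-(N'-1)=N+1\equiv N-2\), matching downward.
\item If \(b'\) is downward, then \(s_n\equiv-(N'-2)=N+2\equiv N-1\), matching upward.
\end{itemize}

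\textbf{Case \(b_n\ne e\).} Now \(\nu(b)=\nu(b')+1\), so the orientation is preserved, and \(s_n\equiv -s_{n-1}+(-1)^c\).
\begin{itemize}
\item If \(b'\) is upward, then \(c\) is even and \(s_n\equiv N+1+1=N+2\equiv N-1\), matching upward.
\item If \(b'\) is downward, then \(c\) is odd and \(s_n\equiv N+2-1=N+1\equiv N-2\), matching downward.
\end{itemize}

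This completes the induction. The final assertion follows immediately: the residues \(N-1\), \(N-2\), \(N\) are distinct modulo \(3\), so no centroid lands in the class \(N\).

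The main obstacle is the bookkeeping that links the sign \(\sigma_n\) to the orientation of the prefix via \(\nu(b')=(n-1)-c\). This identity is exactly what makes the \(\pm1\) increment agree with the orientation in every case. I expect everything else to be routine.
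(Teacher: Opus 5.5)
Your proof is correct and follows the paper's argument essentially verbatim: induction on \(n\) via \(\Lambda_n(b)=2\Lambda_{n-1}(b')+\sigma_n(b)\lambda(b_n)\), with the same four cases according to the orientation of the prefix and whether the appended digit is \(e\). Your identity \(\nu(b')=(n-1)-c\) is the paper's observation that a tile is upward exactly when its number of \(e\)-digits is even, and it is what ties the sign \(\sigma_n\) to the orientation of the prefix.
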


\begin{proof}
The tile is upward exactly when the number of \(e\)-digits is even.  For
\(n=1\), the three noncentral digits have coordinate sums
\(1,1,-2\equiv1=N-1\pmod3\), whereas \(e\) has sum \(0=N-2\).

Assume the assertion at order \(n\), write \(N=2^n\), and append a digit
\(a\) to a word \(b\).  From \eqref{eq:Lambda},
\[
  \Lambda_{n+1}(ba)
   =2\Lambda_n(b)+(-1)^{\#\{r:b_r=e\}}\lambda(a).
\]
Modulo \(3\), the coordinate sum of \(\lambda(a)\) is \(1\) for
\(a\in\{i,j,k\}\) and \(0\) for \(a=e\).  If \(a\) is noncentral the parity
of the number of \(e\)'s is unchanged; if \(a=e\) it is reversed.  The four
cases are
\[
\begin{array}{c@{\qquad}c@{\qquad}c@{\qquad}c}
\toprule
b & a & ba & X_{n+1}(ba)+Y_{n+1}(ba)\pmod3\\
\midrule
\text{up}   & a\ne e & \text{up}   & 2(N-1)+1=2N-1\\
\text{down} & a\ne e & \text{down} & 2(N-2)-1=2N-5\equiv2N-2\\
\text{up}   & a=e    & \text{down} & 2(N-1)=2N-2\\
\text{down} & a=e    & \text{up}   & 2(N-2)=2N-4\equiv2N-1\\
\bottomrule
\end{array}
\]
which is precisely \eqref{eq:orientation-residues} at order \(n+1\).
\end{proof}

\begin{remark}[Main axes in scaled coordinates]\label{rem:main-axes-coordinates}
In the scaled integer coordinates \(\Lambda_n\), the three main reflection axes
have the forms
\[
  (x,0),\qquad (0,y),\qquad (-z,-z).
\]
The order-\(n\) centroid coordinates that actually occur on these axes will be
determined in Subsection~\ref{subsec:axis-supported}.
\end{remark}

\begin{figure}[t]
  \centering
  \includegraphics[width=.90\textwidth]{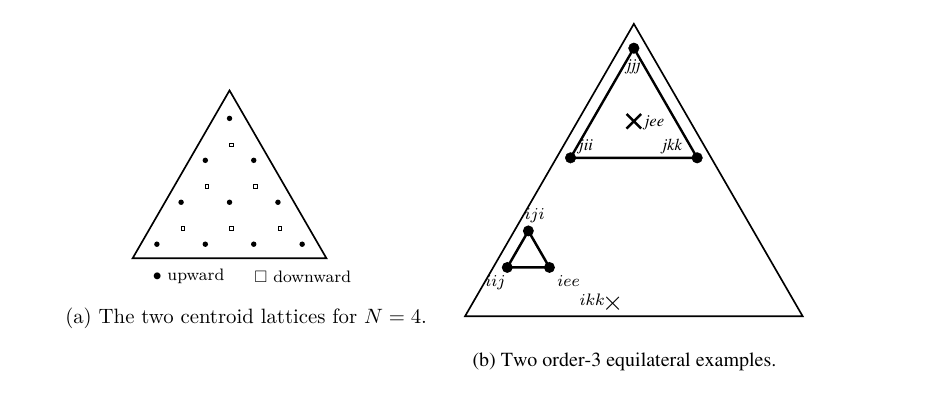}
  \caption{Centroid orientation classes and two order-\(3\) examples.
  In (b), the lower triangle \(\{iee,iij,iji\}\) has product point
  \(P(ikk)\), marked by a cross, which is not its Euclidean center.
  The upper triangle \(\{jii,jjj,jkk\}\) is product-centered, with
  \(C_T=Q_T=P(jee)\).}
  \label{fig:orientation}
\end{figure}

\section{Equilateral completion}

\begin{definition}[Completion number]
For distinct \(b_1,b_2\in\Delta_n\), define
\[
  \kappa_n(b_1,b_2)
  =\#\left\{
    b_3\in\Delta_n:
    \begin{array}{c}
      P(b_1),P(b_2),P(b_3)\text{ are the vertices}\\[-1mm]
      \text{of a nondegenerate equilateral triangle}
    \end{array}
  \right\}.
\]
We say that \(\{b_1,b_2\}\) can be completed if
\(\kappa_n(b_1,b_2)>0\).  Since a segment has exactly two Euclidean
equilateral third vertices,
\(\kappa_n(b_1,b_2)\in\{0,1,2\}\).
\end{definition}

Relative to the oblique basis \((u,w)\), rotations through \(+60^\circ\)
and \(-60^\circ\) are
\begin{equation}\label{eq:rot60}
  R_+(r,s)=(s,s-r),\qquad
  R_-(r,s)=(r-s,r).
\end{equation}
Both preserve \(q\) from \eqref{eq:qform}.

\begin{proposition}[The two candidate completions]\label{prop:two-completions}
Let
\[
  \Lambda_n(b_1)=(r_1,s_1),\qquad
  \Lambda_n(b_2)=(r_2,s_2),
  \qquad b_1\ne b_2.
\]
The two possible Euclidean third vertices have scaled integer coordinates
\begin{align}
  Z_+(b_1,b_2)
    &=(r_1+s_2-s_1,\;r_1-r_2+s_2),\label{eq:zplus}\\
  Z_-(b_1,b_2)
    &=(r_2-s_2+s_1,\;s_1+r_2-r_1).\label{eq:zminus}
\end{align}
Moreover,
\begin{equation}\label{eq:kappa-lookup}
  \kappa_n(b_1,b_2)
   =\#\bigl(
      \{Z_+(b_1,b_2),Z_-(b_1,b_2)\}
      \cap\Lambda_n(\Delta_n)
     \bigr).
\end{equation}
\end{proposition}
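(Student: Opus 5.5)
The plan is to reduce everything to the rotation maps \eqref{eq:rot60} and the integer realization \eqref{eq:integer-realization}. By that lemma, after the common scaling by \(2^{n-1}/d_1\) (a homothety, which preserves equilateral triangles), the point \(P(b)\) corresponds to the vector \(X_n(b)u+Y_n(b)w\). So it suffices to work in the oblique coordinates \((r,s)\) relative to \((u,w)\).

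\textbf{Step 1: the rotations.} First confirm \eqref{eq:rot60}. Since \(u,w\) are unit vectors at \(120^\circ\), the rotation \(\rho_+\) by \(+60^\circ\) takes \(w=v(j)\) (angle \(90^\circ\)) to angle \(150^\circ\), which is \(-v(k)=u+w\). It takes \(u\) (angle \(210^\circ\)) to angle \(270^\circ\), which is \(-w\)... wait, \(v(k)\) sits at \(330^\circ\), and the check is that angle \(270^\circ\) equals \(-w\). Hence
\[
\rho_+(ru+sw)=-rw+s(u+w)=su+(s-r)w,
\]
which is \(R_+\). The same computation with \(\rho_-\) (\(u\mapsto u+w\), \(w\mapsto -u\)) gives \(R_-\). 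Preservation of \(q\) can be checked directly, or it follows because the maps are Euclidean rotations.

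\textbf{Step 2: the third vertices.} The third vertex of an equilateral triangle on the segment from \(A\) to \(B\) is \(A+\rho_\pm(B-A)\). Take \(A=(r_1,s_1)\) and \(B-A=(r_2-r_1,s_2-s_1)\). Then
\[
A+R_+(B-A)=(r_1+s_2-s_1,\;s_1+(s_2-s_1)-(r_2-r_1)),
\]
which equals \eqref{eq:zplus}. Similarly,
\[
A+R_-(B-A)=(r_1+(r_2-r_1)-(s_2-s_1),\;s_1+r_2-r_1),
\]
which equals \eqref{eq:zminus}. Since \(b_1\ne b_2\), injectivity of \(\Lambda_n\) gives \(B\ne A\). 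Hence both triangles are nondegenerate and the two candidates \(Z_\pm\) are distinct, as reflections of each other across line \(AB\). These are the only possible third vertices, by the Euclidean fact quoted in the definition of \(\kappa_n\).

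\textbf{Step 3: the count \eqref{eq:kappa-lookup}.} A word \(b_3\) completes the pair exactly when the scaled vector of \(P(b_3)\) equals \(Z_+\) or \(Z_-\). By \eqref{eq:integer-realization} and the linear independence of \(u,w\), this happens exactly when \(\Lambda_n(b_3)\in\{Z_+,Z_-\}\). Injectivity of \(\Lambda_n\) then makes the correspondence \(b_3\mapsto\Lambda_n(b_3)\) a bijection onto \(\{Z_+,Z_-\}\cap\Lambda_n(\Delta_n)\). Note that \(b_3\) is automatically distinct from \(b_1,b_2\), since \(Z_\pm\ne A,B\).

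The only real risk is a sign or orientation slip in Step 1, that is, matching \(R_+\) to the correct rotation direction. Even a swap there would only interchange the labels \(Z_\pm\) and leave \eqref{eq:kappa-lookup} intact.
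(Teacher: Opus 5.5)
Your proof is correct and follows the paper's route. You write the two completions as \(A+R_\pm(B-A)\) in the oblique \((u,w)\)-coordinates, then use the integer realization and the injectivity of \(\Lambda_n\) to match completing words bijectively with the candidates \(Z_\pm\) lying in \(\Lambda_n(\Delta_n)\). Your explicit check of the rotation formulas \(R_\pm\), which the paper simply asserts, is a useful addition; you should clean up the mid-sentence self-correction in Step 1, because the conclusion \(\rho_+(u)=-w\) is right as stated.
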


\begin{proof}
Put \(A=\Lambda_n(b_1)\) and \(B=\Lambda_n(b_2)\).  The two Euclidean
completions of the segment \(AB\) are
\(A+R_+(B-A)\) and \(A+R_-(B-A)\).  For example,
\[
\begin{aligned}
  A+R_+(B-A)
   &=(r_1,s_1)+(s_2-s_1,\;s_2-s_1-r_2+r_1)\\
   &=(r_1+s_2-s_1,\;r_1-r_2+s_2),
\end{aligned}
\]
giving \eqref{eq:zplus}; \eqref{eq:zminus} is analogous.
Equation \eqref{eq:integer-realization} changes the centroid plane to
\(\Lambda_n\)-coordinates by a fixed similarity, and \(\Lambda_n\) is
injective.  Hence each candidate in \(\Lambda_n(\Delta_n)\) corresponds to
exactly one completing base vector.
\end{proof}

\subsection{Embedding the finite tiling in the full triangular lattice}

The order-\(n\) picture is finite, while the lattice theorem used below is
naturally stated for the regular triangular tessellation of the plane.

\begin{lemma}[Reflection extension]
Let \(T_n^{(0)}\) be the geometric order-\(n\) triangular tiling inside its
outer equilateral triangle \(T^{(0)}\).  Reflect \(T_n^{(0)}\) across each of
the three lines containing the sides of \(T^{(0)}\), and adjoin the three
reflected copies.  The result tiles an equilateral triangle \(T^{(1)}\) with
twice the side length and opposite orientation.  Repeating gives nested
tiled triangles
\[
  T^{(0)}\subset T^{(1)}\subset T^{(2)}\subset\cdots,
  \qquad
  \operatorname{side}(T^{(m)})
   =2^m\operatorname{side}(T^{(0)}),
\]
whose union is the full regular triangular tiling of \(\mathbb R^2\) at the
same mesh scale.
\end{lemma}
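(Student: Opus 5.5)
We argue geometrically, in three steps.

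\emph{Step 1: a single reflection is compatible.} Fix one side \(\ell\) of \(T^{(0)}\) and let \(\rho\) be the reflection in \(\ell\). The order-\(n\) tiling \(T_n^{(0)}\) is the restriction to \(T^{(0)}\) of the regular triangular tiling \(\mathcal T\) of mesh \(d=\operatorname{side}(T^{(0)})/2^n\) whose lattice lines are parallel to the three sides of \(T^{(0)}\) and which has a vertex at a corner of \(T^{(0)}\). This holds by induction on \(n\): midpoint subdivision of each triangle creates exactly the lines parallel to its sides through the midpoints. The side \(\ell\) is a line of \(\mathcal T\). Reflection in any line of a regular triangular tiling preserves that tiling: it maps each of the three parallel families to a parallel family, preserves the spacing, and fixes a lattice vertex on \(\ell\). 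Hence \(\rho(T_n^{(0)})\) consists of tiles of \(\mathcal T\) lying in \(\rho(T^{(0)})\). The two pieces meet only along \(\ell\), and their edge subdivisions agree there, since both consist of the \(2^n\) equal segments of \(\ell\cap T^{(0)}\).

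\emph{Step 2: the four triangles form \(T^{(1)}\).} Reflecting \(T^{(0)}\), with vertices \(A,B,C\), in the line \(BC\) gives the triangle \(A'CB\) with \(A'=B+C-A\), and similarly for the other two sides. The four triangles are therefore exactly the four midpoint pieces of the triangle with vertices \(B+C-A,\;C+A-B,\;A+B-C\). This triangle has \(A,B,C\) as its edge midpoints, so it has twice the side length and is rotated by \(180^\circ\), that is, it has opposite orientation. Its interior is covered without overlap, because \(T^{(0)}\) is the central piece of its midpoint subdivision. By Step 1 every piece is tiled by tiles of the same lattice \(\mathcal T\). Hence \(T^{(1)}\) is tiled by \(\mathcal T\), and this tiling is the order-\((n+1)\) tiling of \(T^{(1)}\) at the same mesh \(d\).

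\emph{Step 3: iterate and take the union.} Applying Steps 1 and 2 to \(T^{(m)}\) at each stage gives nested triangles with \(\operatorname{side}(T^{(m)})=2^m\operatorname{side}(T^{(0)})\), all tiled by the single lattice tiling \(\mathcal T\). The union equals all of \(\mathcal T\) provided the triangles exhaust \(\mathbb R^2\). \(T^{(0)}\) lies in the interior of \(T^{(1)}\) except that its corners are the edge midpoints of \(T^{(1)}\); this is harmless, since \(T^{(m)}\subset T^{(m+1)}\) anyway.

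The main obstacle is that all \(T^{(m)}\) share the same centroid \(G\), since the map \(X\mapsto B+C-A\) and its analogues fix \(G\). Consequently the inradius of \(T^{(m)}\) about \(G\) is \(2^m\) times that of \(T^{(0)}\), so it tends to infinity, and every disc about \(G\) is eventually contained in some \(T^{(m)}\). This step can be made explicit with the homothety \(X\mapsto G-2(X-G)\), which maps \(T^{(m)}\) onto \(T^{(m+1)}\).

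Every point of the plane therefore lies in some \(T^{(m)}\), whose tiles are tiles of \(\mathcal T\), which gives the full regular triangular tiling at mesh \(d\).
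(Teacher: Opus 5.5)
Your proposal is correct and follows essentially the same route as the paper's proof. Both reflect across the three sides to get the three adjacent copies, note that the four triangles form the doubled, oppositely oriented triangle, use that reflection in a mesh line preserves the mesh, iterate, and conclude from concentricity and inradii tending to infinity. You add details the paper leaves implicit: the lattice description of \(T_n^{(0)}\), the explicit vertices \(B+C-A\), etc., and the homothety \(X\mapsto 3G-2X\) carrying \(T^{(m)}\) onto \(T^{(m+1)}\). The only slips are cosmetic: ``the map \(X\mapsto B+C-A\)'' should read \(A\mapsto B+C-A\), and the common centroid \(G\) is what resolves the exhaustion step, not its obstacle.
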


\begin{proof}
Reflection across the three sides of one equilateral triangle produces its
three adjacent congruent copies.  The four triangles fit into a new
equilateral triangle of doubled side length and opposite orientation.
Reflection preserves the triangular mesh, so the same holds for the full
subdivision.  Iteration gives the displayed side lengths.  The triangles
remain concentric and their inradii tend to infinity, so every bounded
subset of the plane eventually lies in some \(T^{(m)}\).
\end{proof}

\subsection{The same-orientation theorem and its three-color interpretation}

In the full triangular mesh, consider the critical-point lattice consisting
of mesh vertices together with all face centroids.  Color mesh vertices with
one color, centroids of upward faces with a second, and centroids of
downward faces with a third.  Up to Euclidean similarity this is the
three-colored lattice of Ivrissimtzis, Dodgson, and Sabin.

\begin{proposition}[Three-color theorem; Ivrissimtzis--Dodgson--Sabin]
Let three points of this critical-point lattice form a nondegenerate
equilateral triangle.  Then the three points either all have the same color
or have three distinct colors.
\end{proposition}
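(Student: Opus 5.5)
We work in scaled oblique coordinates in which the critical-point lattice becomes a coset decomposition of one triangular lattice. Scale so that the mesh vertices form the lattice \(M=\mathbb Z u+\mathbb Z w\) with \(|u|=|w|=1\) at \(120^\circ\). The face centroids are then the points of \(M\) translated by \(\pm\frac13(u-w)\), up to the choice of sign convention. Scaling by \(3\) and using coordinates \((x,y)\) with respect to \((u,w)\), the critical-point lattice becomes the sublattice
\[
  L=\{(x,y)\in\mathbb Z^2: x+y\not\equiv\ \text{(one excluded residue)}\},
\]
or more cleanly a full lattice \(\Gamma\) containing \(3M\) with index \(3\). Concretely, \(\Gamma/3M\cong\mathbb Z/3\), and the three cosets are exactly the three colors. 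This matches Lemma~\ref{lem:orientation-residues}, where the class of \(X+Y \bmod 3\) separates upward centroids, downward centroids, and the empty third class. Here the empty class is filled by the mesh vertices.

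Define the color map \(c:\Gamma\to\mathbb Z/3\) as a group homomorphism with kernel \(3M\); for instance \(c(x,y)=x+y \bmod 3\) after a suitable choice of coordinates on \(\Gamma\). The key step is to show that \(c\) intertwines the \(60^\circ\) rotation in a controlled way. Using the formulas \eqref{eq:rot60}, \(R_+(r,s)=(s,s-r)\), we get \(c(R_+(r,s))=2s-r\equiv -(r+s)+3s\equiv -c(r,s)\pmod 3\). Thus \(c\circ R_+=-c\) on difference vectors, and similarly for \(R_-\). One must check that \(R_\pm\) preserve \(\Gamma\); this holds because the hexagonal lattice \(\Gamma\) (the \(A_2\) lattice rescaled) is invariant under \(60^\circ\) rotation.

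Now let \(A,B,C\in\Gamma\) form a nondegenerate equilateral triangle, so \(C=A+R_\pm(B-A)\). Applying the homomorphism \(c\) gives
\[
  c(C)=c(A)-c(B-A)=2c(A)-c(B)\equiv -c(A)-c(B)\pmod 3,
\]
hence \(c(A)+c(B)+c(C)\equiv0\pmod3\). In \(\mathbb Z/3\), a triple of residues with zero sum is either constant (\(a,a,a\) with \(3a=0\)) or a permutation of \(0,1,2\). Two equal values and one different is impossible, since \(2a+b\equiv0\) forces \(b\equiv a\). This is exactly the dichotomy claimed.

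The main obstacle is the bookkeeping in the first step: choosing coordinates so that \(\Gamma\) is visibly rotation-invariant and the color map is a homomorphism whose kernel is exactly the vertex sublattice, with the up and down centroids as the two nonzero cosets. I would verify this by checking that the three generators \(\frac13(2u+w)\)-type centroid offsets lie in a single nonzero coset. After that, the mod-\(3\) identity \(c\circ R_\pm=-c\) and the residue argument above are routine.
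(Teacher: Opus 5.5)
Your argument is correct, but it is a different route from the paper's: the paper gives no argument for this proposition and simply cites Proposition~4.1 of Ivrissimtzis, Dodgson, and Sabin. What you give is self-contained. It is essentially the paper's own \emph{first} proof of Theorem~\ref{thm:same-orientation}, run on the whole critical-point lattice rather than on centroids only.

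Concretely, your identity $c(A)+c(B)+c(C)\equiv0 \pmod 3$ is the relation $\Sigma(Z_\pm)+\Sigma(A)+\Sigma(B)\equiv0$ used there. As you note, the residue class that Lemma~\ref{lem:orientation-residues} leaves empty is exactly where the mesh vertices lie. In the paper's scaling the critical-point lattice is all of $\mathbb Z^2$, the three colors are the three classes of $X+Y \bmod 3$, and the vertices form the class $X+Y\equiv N$.

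Your route removes the dependence on the literature and shows that the paper's two proofs of the same-orientation theorem are really one mod-$3$ computation. The citation buys brevity and an external confirmation that does not rest on the paper's own residue arithmetic.

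One piece of bookkeeping needs repair. In your tripled $(u,w)$-coordinates the critical-point lattice is $\Gamma=\{(x,y)\in\mathbb Z^2:x+y\equiv0 \pmod 3\}$. So your first displayed set $L$ is the wrong description: it is really the centroid set in the paper's scaling, and it is not a lattice. Moreover, $x+y \bmod 3$ vanishes identically on $\Gamma$, so it cannot serve as the color map in those coordinates. Any of the following fixes works:
\begin{itemize}
\item Use $c(x,y)=x \bmod 3$ on $\Gamma$. Its kernel is exactly $3M$, and it satisfies $c\circ R_\pm=-c$ because $y\equiv-x$ on $\Gamma$.
\item Pass to the basis $u'=u-w$, $w'=u+2w$ of $\Gamma$. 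This is a $120^\circ$ basis with the same orientation as $(u,w)$, so \eqref{eq:rot60} still applies, and in it $3M=\{x'+y'\equiv0\}$.
\item Simplest, work directly in the paper's scaling with $c=X+Y \bmod 3$. There you do not need the kernel to be the vertex class, even though the origin is a centroid. The colors are just a bijective relabeling of the three residues, so ``all equal or all distinct'' for the residues transfers directly to the colors.
\end{itemize}
With any of these, the rest of your argument, including the zero-sum dichotomy in $\mathbb Z/3$, goes through unchanged.
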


\begin{proof}
This is Proposition~4.1 of Ivrissimtzis, Dodgson, and Sabin
\cite{IDS2004}.
\end{proof}

\begin{theorem}[Same-orientation theorem]\label{thm:same-orientation}
If \(P(b_1),P(b_2),P(b_3)\) form a nondegenerate equilateral triangle, then
the three associated tiles have the same orientation.  Equivalently,
\begin{equation}\label{eq:same-squares}
  b_1^2=b_2^2=b_3^2.
\end{equation}
\end{theorem}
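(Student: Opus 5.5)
Our plan is to give the intrinsic residue argument modulo \(3\) promised in the introduction. We work in the scaled coordinates \(\Lambda_n\) and use Proposition~\ref{prop:two-completions} together with Lemma~\ref{lem:orientation-residues}. Write \(\Lambda_n(b_1)=(r_1,s_1)\) and \(\Lambda_n(b_2)=(r_2,s_2)\). Set \(\sigma_1=r_1+s_1\), \(\sigma_2=r_2+s_2\), and \(\tau=r_2-s_2-r_1+s_1\). The plan is to compute the coordinate sum of each candidate third vertex modulo \(3\) and read off the only possible orientation of \(b_3\).

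From \eqref{eq:zplus}, the sum of coordinates of \(Z_+\) is \(2r_1-r_2+2s_2-s_1\). Modulo \(3\) this is \(-(r_1+s_1)-(r_2+s_2)\), using \(2\equiv-1\) and \(-s_1\equiv 2s_1\) suitably. We check the congruence carefully: \(2r_1-s_1\equiv -r_1-s_1\) and \(-r_2+2s_2\equiv -r_2-s_2\). So
\[
  \Sigma(Z_+)\equiv-\sigma_1-\sigma_2\pmod 3.
\]
From \eqref{eq:zminus} the sum is \(2r_2-s_2+2s_1-r_1\), which by the same reduction is again \(\equiv-\sigma_1-\sigma_2\pmod3\). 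Hence both completions satisfy \(\Sigma(Z)\equiv-(\sigma_1+\sigma_2)\). This is the same as the centroid identity \(\Sigma(Z)+\sigma_1+\sigma_2\equiv 0\): the Euclidean center of an equilateral triangle on the triangular lattice has coordinate sum \(\equiv0\) after scaling.

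Now we do the case analysis using Lemma~\ref{lem:orientation-residues}. Put \(c=N\bmod 3\). Upward tiles have \(\sigma\equiv c-1\), downward tiles have \(\sigma\equiv c-2\), and the class \(c\) is empty.
\begin{itemize}
\item If \(b_1,b_2\) are both upward, then \(\Sigma(Z)\equiv 2-2c\equiv c-1\), since \(3c\equiv 0\) gives \(2-2c\equiv c+2\equiv c-1\). So \(b_3\) is upward.
\item If both are downward, then \(\Sigma(Z)\equiv 4-2c\equiv c-2\), so \(b_3\) is downward.
\item If the orientations are mixed, then \(\Sigma(Z)\equiv 3-2c\equiv c\), the empty class. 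So no \(b_3\) exists.
\end{itemize}
In every case a realized third vertex shares the orientation of \(b_1\) and \(b_2\), and a mixed pair admits no completion. Applying this to the pair \(\{b_1,b_2\}\) of any equilateral triple gives the first claim. The equivalent form \eqref{eq:same-squares} follows from \eqref{eq:square-orientation}: at fixed \(n\), orientation is determined by the parity of \(\nu(b)\), and this parity determines the sign of \(b^2\).

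The main obstacle is the mod-\(3\) reduction of the two completion formulas, where sign slips are easy. As a cross-check, we would alternatively invoke the three-color theorem of Ivrissimtzis--Dodgson--Sabin, via the reflection-extension lemma to embed the finite tiling in the full lattice. Tile centroids are never mesh vertices, so the "three distinct colors" option is excluded, and that option is the only way to mix orientations.
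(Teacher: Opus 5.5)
Your proposal is correct and is essentially the paper's first proof: the same mod-\(3\) coordinate-sum identity for \(Z_\pm\), combined with Lemma~\ref{lem:orientation-residues}, shows that a mixed pair would need a completion in the empty residue class \(N\). Your three-color cross-check is the paper's second proof. Your extra observation that a same-orientation pair can only be completed within its own residue class is a harmless refinement; it is exactly the fact the paper uses afterwards in the same-orientation completion counts.
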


\begin{proof}[First proof: integer centroid coordinates]
For \(A=(r,s)\in\mathbb Z^2\), put
\[
  \Sigma(A)=r+s,\qquad
  \rho(A)\equiv\Sigma(A)\pmod3.
\]
Let \(A=(r_1,s_1)\) and \(B=(r_2,s_2)\).  Direct addition in
\eqref{eq:zplus}--\eqref{eq:zminus} gives
\[
  \Sigma(Z_+)+\Sigma(A)+\Sigma(B)=3(r_1+s_2),
\]
and
\[
  \Sigma(Z_-)+\Sigma(A)+\Sigma(B)=3(r_2+s_1).
\]
Hence either candidate completion satisfies
\begin{equation}\label{eq:completion-residue}
  \rho(Z_\pm)\equiv-\rho(A)-\rho(B)\pmod3.
\end{equation}

If one tile is upward and the other downward, Lemma
\ref{lem:orientation-residues} gives residues \(N-1\) and \(N-2\).
Equation \eqref{eq:completion-residue} then gives residue
\[
  -(N-1)-(N-2)\equiv N\pmod3.
\]
But Lemma~\ref{lem:orientation-residues} says that no order-\(n\) centroid
lies in this third residue class.  Thus an opposite-orientation pair cannot
be completed.  If three centroids already form an equilateral triangle,
each pair is completed by the third, so all three orientations agree.
Equation \eqref{eq:square-orientation} gives \eqref{eq:same-squares}.
\end{proof}

\begin{proof}[Second proof: three-color theorem]
By the reflection-extension lemma, the finite order-\(n\) tiling lies in the
full regular triangular mesh.  Each point under consideration is a face
centroid, so none has the mesh-vertex color.  The three-distinct-colors
alternative would require one point of each color, including the
mesh-vertex color, and is impossible.  Hence the three centroids share one
color.  The two centroid colors distinguish upward from downward faces, so
the tiles have one orientation.  Equation \eqref{eq:square-orientation}
again gives \eqref{eq:same-squares}.
\end{proof}

\begin{corollary}[Orientation obstruction]
If \(b_1^2\ne b_2^2\), then
\[
  \kappa_n(b_1,b_2)=0.
\]
\end{corollary}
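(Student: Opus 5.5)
This is a direct consequence of the Same-orientation theorem (Theorem~\ref{thm:same-orientation}), argued by contrapositive. Suppose \(b_1^2\ne b_2^2\) and, for contradiction, that \(\kappa_n(b_1,b_2)>0\). Then some \(b_3\in\Delta_n\) makes \(P(b_1),P(b_2),P(b_3)\) the vertices of a nondegenerate equilateral triangle. Theorem~\ref{thm:same-orientation} then gives \(b_1^2=b_2^2=b_3^2\), which contradicts the hypothesis. Hence no completing \(b_3\) exists and \(\kappa_n(b_1,b_2)=0\).

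Distinctness of \(b_1\) and \(b_2\), needed for \(\kappa_n\) to be defined, is automatic. Indeed, \(b_1=b_2\) would force \(b_1^2=b_2^2\). By \eqref{eq:square-orientation}, the condition \(b_1^2\ne b_2^2\) says exactly that the two tiles have opposite orientations.

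One can also argue at the level of coordinates, without invoking a third vertex. Lemma~\ref{lem:orientation-residues} places the residues of \(\Lambda_n(b_1)\) and \(\Lambda_n(b_2)\) in \(\{N-1,N-2\}\) modulo \(3\), one in each class. By \eqref{eq:completion-residue}, both candidates \(Z_\pm(b_1,b_2)\) then have residue \(N\pmod 3\). That class contains no order-\(n\) centroid, so by \eqref{eq:kappa-lookup} neither candidate lies in \(\Lambda_n(\Delta_n)\), and the count is \(0\).

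There is no real obstacle here. The only point needing care is translating the square condition into opposite orientation, which is immediate from \eqref{eq:square-orientation}.
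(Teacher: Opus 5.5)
Your proposal is correct and follows the paper: the corollary is stated there as an immediate consequence of the Same-orientation theorem, which is your contrapositive argument. Your coordinate variant is exactly the paper's first proof of that theorem, where opposite residues \(N-1\) and \(N-2\) push both candidates \(Z_\pm\) into the class \(N\pmod 3\), and that class contains no order-\(n\) centroid.
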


\begin{example}
In order \(2\),
\[
  (ei)^2=-e_2,\qquad (ik)^2=e_2.
\]
Consequently \(\{ei,ik\}\) cannot be completed, even though the
\mbox{definition uses} only centroid positions.  The converse is false: common orientation is
necessary but not sufficient.
\end{example}

\section{Counting pairs with zero, one, and two completions}

Let \(T_k\) denote the triangular point set with \(k\) rows, so
\[
  |T_k|=\frac{k(k+1)}2.
\]
For \(j\in\{0,1,2\}\), let \(a_j(k)\) be the number of unordered pairs of
points of \(T_k\) having exactly \(j\) equilateral completions in \(T_k\).
Brouwer, Joe, Noble, and Noble proved \cite[Theorems~3.3--3.5]{Brouwer2024}
that
\begin{equation}\label{eq:aj02}
  a_2(k)=a_0(k)=
  \begin{cases}
    \dfrac{(k-1)^2(k+1)(k+3)}{32},&k\text{ odd},\\[6pt]
    \dfrac{k(k-2)(k+2)^2}{32},&k\text{ even},
  \end{cases}
\end{equation}
and
\begin{equation}\label{eq:aj1}
  a_1(k)=
  \begin{cases}
    \dfrac{(k^2-1)(k^2+2k+3)}{16},&k\text{ odd},\\[6pt]
    \dfrac{k(k+2)(k^2+2)}{16},&k\text{ even}.
  \end{cases}
\end{equation}

\begin{theorem}[Same-orientation completion counts]
Let \(A_j(n)\) be the number of unordered pairs of order-\(n\) base vectors
having the same orientation and completion number \(j\).  With \(N=2^n\),
\begin{equation}\label{eq:A02}
  A_0(n)=A_2(n)=\frac{N^2(N^2-4)}{16},
\end{equation}
and
\begin{equation}\label{eq:A1}
  A_1(n)=\frac{N^2(N^2+2)}8.
\end{equation}
\end{theorem}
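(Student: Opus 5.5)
Split the same-orientation pairs by orientation class and treat the two classes separately. By the Same-orientation theorem, any completion of a pair of upward tiles is itself an upward tile, and likewise for downward pairs. So the completion number \(\kappa_n(b_1,b_2)\) of a same-orientation pair equals the number of completions of that pair inside its own orientation class. The upward centroids form the triangular point set \(T_N\) and the downward centroids a translated copy of \(T_{N-1}\). Here \(N=2^n\) is even and \(N-1\) is odd. Equilateral completion is invariant under similarities, and by \eqref{eq:integer-realization} the centroid plane is similar to the \(\Lambda_n\)-lattice. Hence the pair counts inside each class are exactly the Brouwer--Joe--Noble--Noble numbers, giving
\[
  A_j(n)=a_j(N)+a_j(N-1),\qquad j\in\{0,1,2\}.
\]

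One point needs care in this reduction. We must check that the upward (respectively downward) centroid set really is a standard triangular point set \(T_N\) (respectively \(T_{N-1}\)) with the unit equal to the nearest-neighbour spacing. We must also check that a candidate completion \(Z_\pm\) lands in \(\Lambda_n(\Delta_n)\) if and only if it lands in the corresponding \(T_k\). The first is the geometric statement recorded after \eqref{eq:updown-count}. The second follows because the third vertex lies in the correct residue class by \eqref{eq:completion-residue} and Lemma~\ref{lem:orientation-residues}, so membership in \(\Lambda_n(\Delta_n)\) is the same as membership in the same-orientation point set. I expect this identification to be the only conceptual obstacle; the rest is algebra.

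The final step is substitution. With \(k=N\) even, \eqref{eq:aj02} gives \(a_0(N)=N(N-2)(N+2)^2/32\). With \(k=N-1\) odd, it gives \(a_0(N-1)=(N-2)^2N(N+2)/32\). Summing and factoring out \(N(N-2)(N+2)/32\) leaves the bracket \((N+2)+(N-2)=2N\), so
\[
  A_0(n)=\frac{N^2(N^2-4)}{16},
\]
and the same value holds for \(A_2\) since \(a_2=a_0\). For \(A_1\), \eqref{eq:aj1} gives \(N(N+2)(N^2+2)/16\) for the even case and \(((N-1)^2-1)((N-1)^2+2(N-1)+3)/16=N(N-2)(N^2+2)/16\) for the odd case. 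Their sum is \(N(N^2+2)\cdot 2N/16=N^2(N^2+2)/8\), which is \eqref{eq:A1}.

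As a sanity check, \(A_0+A_1+A_2\) should equal \(\binom{U_n}{2}+\binom{D_n}{2}\). Indeed \(2\cdot N^2(N^2-4)/16+N^2(N^2+2)/8=N^2(2N^2-2)/8=N^2(N^2-1)/4\), which matches the sum of the two binomials computed from \eqref{eq:updown-count}.
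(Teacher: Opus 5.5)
Your proof is correct and follows the paper's route: the same-orientation theorem reduces \(A_j(n)\) to \(a_j(N)+a_j(N-1)\) for the upward copy of \(T_N\) and the downward copy of \(T_{N-1}\), and substituting the Brouwer--Joe--Noble--Noble formulas then gives the stated values. The paper's proof does not include your residue-class check on membership in \(\Lambda_n(\Delta_n)\) or your consistency check against \(\binom{U_n}{2}+\binom{D_n}{2}\), but both are valid.
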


\begin{proof}
The upward centroid set is a copy of \(T_N\) and the downward centroid set
a copy of \(T_{N-1}\).  By Theorem~\ref{thm:same-orientation}, a completion
of a same-orientation pair remains in the same centroid set.  Hence
\[
  A_j(n)=a_j(N)+a_j(N-1).
\]
Since \(N=2^n\) is even and \(N-1\) is odd, \eqref{eq:aj02} gives
\[
\begin{aligned}
  A_0(n)=A_2(n)
   &=\frac{N(N-2)(N+2)^2}{32}
     +\frac{(N-2)^2N(N+2)}{32}\\
   &=\frac{N^2(N^2-4)}{16}.
\end{aligned}
\]
Likewise \eqref{eq:aj1} gives
\begin{align*}
  A_1(n)
   &=\frac{N(N+2)(N^2+2)}{16}
     +\frac{N(N-2)(N^2+2)}{16}\\
   &=\frac{N^2(N^2+2)}8.\qedhere
\end{align*}
\end{proof}

\begin{corollary}[All-pair completion counts]
Let \(B_j(n)\) be the number of all unordered pairs
\(\{b_1,b_2\}\subset\Delta_n\) with \(\kappa_n(b_1,b_2)=j\).  Then
\begin{align}
  B_0(n)&=\frac{N^2(5N^2-8)}{16},\label{eq:B0}\\
  B_1(n)&=\frac{N^2(N^2+2)}8,\qquad
  B_2(n)=\frac{N^2(N^2-4)}{16}.\label{eq:B12}
\end{align}
\end{corollary}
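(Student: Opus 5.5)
The corollary follows from the preceding theorem on same-orientation completion counts together with the orientation obstruction. I will split all unordered pairs of distinct base vectors into same-orientation and opposite-orientation pairs and count each part separately.

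By the orientation obstruction corollary, every opposite-orientation pair has \(\kappa_n=0\). Hence \(B_1(n)=A_1(n)\) and \(B_2(n)=A_2(n)\), which are read directly from \eqref{eq:A02} and \eqref{eq:A1}. This gives \eqref{eq:B12}.

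For \(B_0\), I write
\[
  B_0(n)=A_0(n)+U_nD_n.
\]
Here \(U_nD_n\) is the number of opposite-orientation pairs, using \eqref{eq:updown-count}. With \(N=2^n\) this product is
\[
  U_nD_n=\frac{N(N+1)}2\cdot\frac{N(N-1)}2=\frac{N^2(N^2-1)}4.
\]
Adding \(A_0(n)=N^2(N^2-4)/16\) gives
\[
  \frac{N^2\bigl((N^2-4)+4(N^2-1)\bigr)}{16}=\frac{N^2(5N^2-8)}{16},
\]
which is \eqref{eq:B0}.

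As a consistency check, the three counts should sum to \(\binom{N^2}{2}=N^2(N^2-1)/2\). Indeed,
\[
  \frac{N^2\bigl[(5N^2-8)+2(N^2+2)+(N^2-4)\bigr]}{16}=\frac{N^2(8N^2-8)}{16}=\frac{N^2(N^2-1)}2.
\]
No step is a real obstacle. The only point needing care is that the same-orientation counts \(A_j\) already include both orientation classes, the \(T_N\) copy and the \(T_{N-1}\) copy, so no pair is counted twice or omitted.
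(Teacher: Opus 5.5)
Your proof is correct and follows the paper's argument: the $U_nD_n$ opposite-orientation pairs all have $\kappa_n=0$ and so add only to $B_0$, while $B_1=A_1$ and $B_2=A_2$ come directly from the same-orientation theorem. Your closing check that the three counts sum to $\binom{N^2}{2}$ is a small addition that the paper does not include.
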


\begin{proof}
There are
\[
  U_nD_n=\frac{N^2(N^2-1)}4
\]
pairs of opposite orientation, and every such pair has completion number
zero.  Thus \(B_0(n)=U_nD_n+A_0(n)\), giving \eqref{eq:B0}; the other two
counts are \(A_1(n)\) and \(A_2(n)\).
\end{proof}

\begin{remark}
Exactly one half of all unordered pairs have opposite orientation and are
immediately noncompletable.  Among same-orientation pairs, the proportions
with zero, one, and two completions tend respectively to
\[
  \frac14,\qquad\frac12,\qquad\frac14.
\]
Thus same-orientation pairs that cannot be completed form a substantial
class rather than a boundary anomaly.
\end{remark}

\begin{corollary}[Total number of equilateral centroid triangles]
Let \(E_n\) be the set of nondegenerate equilateral triangles formed by
order-\(n\) tile centroids.  Then
\begin{equation}\label{eq:En-total}
  |E_n|=\frac{N^2(N^2-1)}{12}
       =\frac{4^n(4^n-1)}{12}.
\end{equation}
\end{corollary}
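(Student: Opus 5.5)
We count triangles by double counting pairs. Each nondegenerate equilateral triangle \(T\in E_n\) contains exactly three unordered pairs of vertices. For each such pair, the third vertex of \(T\) is one of the completing base vectors counted by \(\kappa_n\). Conversely, a pair \(\{b_1,b_2\}\) together with a completion \(b_3\) determines the triangle \(\{b_1,b_2,b_3\}\in E_n\). Proposition~\ref{prop:two-completions} and the injectivity of \(\Lambda_n\) make this correspondence exact. Hence
\[
  3|E_n|=\sum_{\{b_1,b_2\}}\kappa_n(b_1,b_2)=B_1(n)+2B_2(n).
\]

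By the all-pair completion counts \eqref{eq:B12}, the right-hand side equals
\[
  \frac{N^2(N^2+2)}8+\frac{N^2(N^2-4)}8=\frac{N^2(2N^2-2)}8=\frac{N^2(N^2-1)}4.
\]
Dividing by \(3\) gives \(|E_n|=N^2(N^2-1)/12\). Substituting \(N=2^n\) yields \eqref{eq:En-total}.

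The main point to check is that the incidence count is clean. Each triangle is counted once for each of its three edges, and each pair with two completions contributes two different triangles, one on each side of the segment. Both facts are immediate, since the candidates \(Z_+\) and \(Z_-\) are distinct points. Nondegeneracy holds automatically because \(b_1\ne b_2\). Integrality of the quotient serves as a consistency check: \(N^2-1=4^n-1\) is divisible by \(3\), and \(N^2/4\) is an integer for \(n\ge1\).
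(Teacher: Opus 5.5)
Your proposal is correct and follows the paper's proof exactly: the paper also double counts triangle–edge incidences to get \(3|E_n|=B_1(n)+2B_2(n)\) and then substitutes the all-pair completion counts \eqref{eq:B12}. Your arithmetic, \(\frac{N^2(N^2+2)}{8}+\frac{N^2(N^2-4)}{8}=\frac{N^2(N^2-1)}{4}\), is right, and your remark that the two completions \(Z_\pm\) are distinct points correctly justifies the factor \(2\) for pairs in \(B_2(n)\).
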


\begin{proof}
Each equilateral triangle contributes its three sides to the pair count,
while a pair of completion number \(j\) lies in exactly \(j\) triangles.
Therefore
\[
  3|E_n|=B_1(n)+2B_2(n),
\]
and substitution of \eqref{eq:B12} gives \eqref{eq:En-total}.
\end{proof}

\begin{remark}[A second derivation and OEIS notes]
Kagey gives a bijective proof \cite{Kagey2022} of the classical
triangular-grid enumeration that a point set \(T_k\) contains
\[
  \binom{k+2}{4}
\]
equilateral triangles, with no restriction on orientation.  The same count
is recorded as \oeislink{https://oeis.org/A000332}{OEIS A000332}.
Since the two centroid orientation classes are \(T_N\) and \(T_{N-1}\),
Theorem~\ref{thm:same-orientation} gives directly
\[
  |E_n|
   =\binom{N+2}{4}+\binom{N+1}{4}
   =\frac{N^2(N^2-1)}{12},
\]
which is a second derivation of \eqref{eq:En-total}.  The pair-completion
formulas above retain the finer information encoded by
\(B_0(n),B_1(n),B_2(n)\).

The values
\[
  1,20,336,5440,87296,\ldots
\]
of \(|E_n|\) appear, after an index shift, as
\oeislink{https://oeis.org/A166984}{OEIS A166984}.  Likewise
\[
  3,36,528,8256,131328,\ldots
\]
of \(B_1(n)\) appear to reproduce
\oeislink{https://oeis.org/A233196}{OEIS A233196}, again after an index shift.
\end{remark}

\section{Local cycles and locality}\label{sec:locality}

Let
\[
  \gamma:i\mapsto j\mapsto k\mapsto i,\qquad \gamma(e)=e,
\]
and for \(b=b_1\cdots b_n\) put
\[
  \supp(b)=\{r\in\{1,\ldots,n\}:b_r\ne e\}.
\]

\begin{definition}[Local cycle]
For fixed \(b\in\Delta_n\), an \emph{admissible selected set} is a nonempty
set
\[
  S\subseteq\supp(b).
\]
For such \(S\), let \(\gamma_S(b)\) be obtained by applying \(\gamma\) in
the coordinates of \(S\) and fixing the others.  The associated local orbit
is
\[
  T(b,S)=\{b,\gamma_S(b),\gamma_S^2(b)\}.
\]
\end{definition}

This is the canonical local-cycle convention of
\cite[Theorem~4.8]{Dement2026}.  By that theorem, every \(T(b,S)\) above is a
nondegenerate equilateral centroid triangle.

\begin{definition}[Local and nonlocal classes]
For nonempty \(S\subseteq\{1,\ldots,n\}\), put
\[
  L_n(S)=\{T(b,S):b\in\Delta_n,\ S\subseteq\supp(b)\}.
\]
Define
\[
  L_n=\bigcup_{\varnothing\ne S\subseteq\{1,\ldots,n\}}L_n(S).
\]
A triangle \(T\in E_n\) is \emph{local} if \(T\in L_n\), and
\emph{nonlocal} otherwise.  Thus
\[
  X_n=E_n\setminus L_n
\]
is the nonlocal class.
\end{definition}

\begin{definition}[Common cyclic direction]
Let \((b_0,b_1,b_2)\) be an ordered triple and let \(S\) be a set of
coordinates.  The coordinates in \(S\) have a \emph{common cyclic
direction} if there is one \(\varepsilon\in\{+1,-1\}\) such that for every
\(r\in S\),
\[
  (b_{0r},b_{1r},b_{2r})
   =\bigl(a_r,\gamma^\varepsilon(a_r),\gamma^{2\varepsilon}(a_r)\bigr)
\]
for some \(a_r\in\{i,j,k\}\), with exponents read modulo \(3\).
\end{definition}

This is stronger than requiring each varying coordinate to contain
\(i,j,k\).  For example, in
\[
  (ij,ji,kk)
\]
the first coordinate is \((i,j,k)\), whereas the second is \((j,i,k)\);
the two coordinates have opposite cyclic directions.  Reordering the words
rotates or reverses both coordinate orders simultaneously, so the mismatch
cannot be removed.

\begin{proposition}[Digitwise recognition of local cycles]\label{prop:local-recognition}
An unordered equilateral triple
\[
  T=\{b_0,b_1,b_2\}\in E_n
\]
belongs to \(L_n\) if and only if its vertices can be ordered so that in
every coordinate \(r\) either
\[
  (b_{0r},b_{1r},b_{2r})=(a,a,a)
  \qquad(a\in\{i,j,k,e\}),
\]
or
\[
  \{b_{0r},b_{1r},b_{2r}\}=\{i,j,k\},
\]
and all coordinates of the second type have a common cyclic direction.
In that case \(S\) is exactly the set of coordinates of the second type.
\end{proposition}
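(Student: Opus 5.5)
Both directions come straight from the definitions of \(T(b,S)\) and of common cyclic direction; the geometric input that the result is equilateral is supplied by \cite[Theorem~4.8]{Dement2026}.

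\emph{Forward direction.} Suppose \(T=T(b,S)\) with \(\varnothing\ne S\subseteq\supp(b)\). Order the vertices as \((b_0,b_1,b_2)=(b,\gamma_S(b),\gamma_S^2(b))\). For \(r\notin S\) the three words agree at \(r\), since \(\gamma_S\) fixes that coordinate, giving type \((a,a,a)\). For \(r\in S\) we have \(b_r\in\{i,j,k\}\) because \(S\subseteq\supp(b)\), so the column is \((b_r,\gamma(b_r),\gamma^2(b_r))\). This is a copy of \(\{i,j,k\}\) traversed in the \(\varepsilon=+1\) direction. So every coordinate of \(S\) is of the second type with the common direction \(+1\). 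The coordinates of the second type are thus exactly \(S\): each column is either constant or a permutation of \(\{i,j,k\}\), never both.

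\emph{Converse.} Assume an ordering \((b_0,b_1,b_2)\) with the stated column structure. Let \(S\) be the set of second-type coordinates and \(\varepsilon\) the common direction.

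\begin{itemize}
\item \textbf{\(S\) is nonempty.} If \(S=\varnothing\), every column is constant, so \(b_0=b_1=b_2\). This contradicts that the three vertices of \(T\in E_n\) are distinct.
\item \textbf{\(\varepsilon=+1\).} If \(\varepsilon=-1\), swap \(b_1\) and \(b_2\). This reverses every varying column simultaneously and leaves constant columns unchanged.
\item \textbf{Identification.} We now have \(b_{1r}=\gamma(b_{0r})\) and \(b_{2r}=\gamma^2(b_{0r})\) for \(r\in S\), with all three words equal to \(b_0\) off \(S\). Hence \(b_1=\gamma_S(b_0)\) and \(b_2=\gamma_S^2(b_0)\). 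Moreover \(S\subseteq\supp(b_0)\), because each varying column consists of \(i,j,k\) only. So \(T=T(b_0,S)\in L_n(S)\subseteq L_n\).
\end{itemize}

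\emph{Intrinsic form of \(S\).} The final claim, that \(S\) is exactly the set of second-type coordinates, should hold for every representation, not just the one built above. Suppose \(T=T(b',S')\) for some other admissible pair. The forward direction applied to that pair shows that \(S'\) equals the set of coordinates where the columns of \(T\) are nonconstant. That set depends only on the unordered triple, so \(S'=S\).

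The only step needing care is the reordering in the converse: a global reorientation, here the swap of \(b_1\) and \(b_2\), acts uniformly on all columns. This is exactly why a common direction is required.
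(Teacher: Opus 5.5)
Your proof is correct and matches the paper's argument: the forward direction reads off the column patterns of $(b,\gamma_S(b),\gamma_S^2(b))$, and the converse takes $S$ to be the set of varying coordinates, notes $S\subseteq\supp(b_0)$, and recovers the other two words by $\gamma_S$ or $\gamma_S^{-1}$ (your swap of $b_1$ and $b_2$ is the same thing). Your explicit checks that $S\ne\varnothing$ by distinctness of the vertices, and that $S$ does not depend on the chosen representation, are small details the paper leaves implicit.
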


\begin{proof}
A local cycle has the stated fixed patterns outside \(S\), while in \(S\)
its three digits are
\(a,\gamma(a),\gamma^2(a)\) with one common direction.  Conversely, suppose
an ordering with the stated patterns exists and let \(S\) be the varying
coordinates.  The first word has no \(e\)-digit in \(S\), so
\(S\subseteq\supp(b_0)\).  Applying either \(\gamma_S\) or
\(\gamma_S^{-1}\), according to the common direction, gives the other two
words.  Thus \(T\) is a local orbit.  The set \(S\) is intrinsic to the
triple as its set of varying coordinates.
\end{proof}

\begin{proposition}[\(C_3\)-orbits of local-cycle representatives]
\label{prop:C3-local-representatives}
Let
\[
  \widetilde L_n
   :=\{(b,S):b\in\Delta_n,\ 
       \varnothing\ne S\subseteq\supp(b)\},
\]
and let
\[
  \pi:\widetilde L_n\longrightarrow L_n,
  \qquad
  \pi(b,S)=T(b,S).
\]
The cyclic group \(C_3=\langle\tau\rangle\) acts on \(\widetilde L_n\) by
\[
  \tau\cdot(b,S)=(\gamma_S(b),S).
\]
This action is free, meaning that every stabilizer is trivial.  For each
\(T\in L_n\), the three representatives sent to \(T\) by \(\pi\) form
exactly one \(C_3\)-orbit.  Consequently, the \(C_3\)-orbits of
\(\widetilde L_n\) are in bijection with \(L_n\).  More generally, if
\(A\subseteq\widetilde L_n\) is a union of \(C_3\)-orbits (equivalently,
\(C_3\)-invariant), then
\begin{equation}\label{eq:C3-invariant-subset}
  |\pi(A)|=\frac{|A|}{3}.
\end{equation}
\end{proposition}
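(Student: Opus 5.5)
First I will check that the action is well defined. If \(S\subseteq\supp(b)\), then \(\gamma_S(b)\) changes only the digits at positions in \(S\). On \(\{i,j,k\}\) it is a permutation, so no \(e\) is created or destroyed, and \(\supp(\gamma_S(b))=\supp(b)\supseteq S\). Hence \((\gamma_S(b),S)\in\widetilde L_n\). Since \(\gamma^3=\mathrm{id}\) on \(\{i,j,k\}\), we get \(\tau^3=\mathrm{id}\), so \(\tau\) generates a genuine \(C_3\)-action.

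For freeness it suffices to show \(\tau\cdot(b,S)\ne(b,S)\), because \(C_3\) has prime order and so any nontrivial stabilizer would contain \(\tau\). Choose \(r\in S\), which exists because \(S\) is nonempty. Then \(b_r\in\{i,j,k\}\) and \(\gamma(b_r)\ne b_r\), so \(\gamma_S(b)\ne b\).

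Next comes the fibre description. Clearly \(\pi\) is constant on orbits: \(T(\gamma_S(b),S)=\{\gamma_S(b),\gamma_S^2(b),b\}=T(b,S)\), and the three orbit elements are distinct by freeness. The main step is the converse. Suppose \(\pi(c,S')=\pi(b,S)=T\). Proposition~\ref{prop:local-recognition} says that \(S\) is intrinsic to \(T\): it is the set of coordinates at which the three words of \(T\) do not all agree. Therefore \(S'=S\). Moreover \(c\in T=\{b,\gamma_S(b),\gamma_S^2(b)\}\), so \((c,S')=\tau^m\cdot(b,S)\) for some \(m\). Hence each fibre \(\pi^{-1}(T)\) is exactly one orbit, of size \(3\). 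This gives the bijection between orbits and \(L_n\), since \(\pi\) is surjective by the definition of \(L_n\).

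Finally, let \(A\) be \(C_3\)-invariant. Then \(A\) is a disjoint union of orbits, each of size \(3\), and \(\pi\) maps distinct orbits to distinct triangles. So \(|\pi(A)|\) equals the number of orbits in \(A\), which is \(|A|/3\).

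The only nonroutine point is the intrinsic recovery of \(S\) from \(T\). I would make it explicit. At \(r\in S\) the three digits are \(a,\gamma(a),\gamma^2(a)\), which are pairwise distinct. At \(r\notin S\) all three digits equal \(b_r\). So \(S\) is precisely the set of non-constant coordinates of the triple.
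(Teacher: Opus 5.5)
Your proof is correct and follows the paper's argument. Freeness holds because a nonempty $S$ of noncentral digits is moved by $\gamma_S$. Each fibre is a single orbit because $S$ is recovered intrinsically from $T$ as its set of varying coordinates (Proposition~\ref{prop:local-recognition}), and division by the orbit size then gives \eqref{eq:C3-invariant-subset}. Your extra checks (well-definedness of the action, the prime-order shortcut, and the direct verification that the varying coordinates of $T(b,S)$ are exactly $S$) only make explicit what the paper's terser proof leaves implicit.
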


\begin{proof}
If \((b,S)\in\widetilde L_n\), then \(S\ne\varnothing\) and every selected
digit is noncentral.  Hence neither \(\gamma_S(b)\) nor
\(\gamma_S^2(b)\) equals \(b\), so the stabilizer of \((b,S)\) in \(C_3\)
is trivial.  Orbit--Stabilizer therefore gives an orbit of size \(3\).

For a local triangle \(T=T(b,S)\), Proposition~\ref{prop:local-recognition}
shows that \(S\) is intrinsic: it is exactly the set of varying coordinates.
The three possible starting vertices are
\[
  b,\qquad \gamma_S(b),\qquad \gamma_S^2(b),
\]
so the three representatives of \(T\) are precisely one \(C_3\)-orbit.
Thus the orbits are in bijection with \(L_n\).  If \(A\) is
\(C_3\)-invariant, it is a union of such three-element orbits, which gives
\eqref{eq:C3-invariant-subset}.
\end{proof}

This is complementary to the use of Orbit--Stabilizer in the preceding
paper: there the conjugation stabilizer is the centralizer and determines
its cardinality \cite[Theorem~6.3]{Dement2026}, whereas here the stabilizers
are trivial and the orbits themselves are the geometric objects being
counted.

\begin{corollary}[The \(e\)-support test]
All three vertices of a local triangle have the same \(e\)-support.
Consequently, unequal \(e\)-supports imply nonlocality.
\end{corollary}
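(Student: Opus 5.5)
The plan is to read this off directly from the definition of a local orbit, using Proposition~\ref{prop:local-recognition}. Let \(T\in L_n\), so \(T=T(b,S)\) for some \(b\in\Delta_n\) and some nonempty \(S\subseteq\supp(b)\). The three vertices are \(b\), \(\gamma_S(b)\) and \(\gamma_S^2(b)\). I will show that \(\gamma_S\) preserves the set of \(e\)-positions \(\{r:b_r=e\}\), which is the complement of \(\supp(b)\); the claim then follows by applying this twice.

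The pointwise check splits into two cases.
\begin{itemize}
\item If \(r\notin S\), then \(\gamma_S\) fixes the digit \(b_r\), so \(b_r=e\) holds before and after.
\item If \(r\in S\), then \(b_r\in\{i,j,k\}\) because \(S\subseteq\supp(b)\). Since \(\gamma\) permutes \(\{i,j,k\}\), the digit \(\gamma(b_r)\) is again noncentral.
\end{itemize}
Hence \(\supp(\gamma_S(b))=\supp(b)\). The set \(S\) is unchanged by the action, and \(S\subseteq\supp(\gamma_S(b))\), so the same argument applies to \(\gamma_S(b)\). This gives \(\supp(\gamma_S^2(b))=\supp(b)\) as well.

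Alternatively, Proposition~\ref{prop:local-recognition} gives the same conclusion coordinatewise. In each coordinate the pattern is either \((a,a,a)\) or a permutation of \(\{i,j,k\}\), so the letter \(e\) appears in coordinate \(r\) either in all three vertices or in none.

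The second sentence of the statement is the contrapositive. If two vertices of some \(T\in E_n\) have different \(e\)-supports, then \(T\) cannot be local, so \(T\in X_n\).

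There is no real obstacle here. The only point needing care is the canonical convention \(S\subseteq\supp(b)\), which is what keeps \(e\) digits from entering the selected coordinates. That convention is built into the definition of \(L_n(S)\), so no extra argument is needed.
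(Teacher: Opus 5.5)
Your proof is correct and essentially matches the paper. The paper gives this corollary with no separate proof, as an immediate consequence of Proposition~\ref{prop:local-recognition}: each coordinate of a local triangle is either constant or a permutation of \(\{i,j,k\}\), which is exactly your alternative argument. Your direct check that \(\gamma_S\) preserves \(\supp(b)\) under the convention \(S\subseteq\supp(b)\) is the same observation read straight from the definition of a local orbit.
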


\begin{proposition}[Fixed selected-set count]\label{prop:fixed-S-count}
For every nonempty \(S\subseteq\{1,\ldots,n\}\),
\[
  |L_n(S)|=3^{|S|-1}4^{n-|S|}.
\]
Moreover, the classes \(L_n(S)\) are pairwise disjoint.
\end{proposition}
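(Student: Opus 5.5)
The plan is to count representatives and then pass to orbits using Proposition~\ref{prop:C3-local-representatives}. Fix a nonempty \(S\subseteq\{1,\ldots,n\}\) and set
\[
  A_S=\{(b,S)\in\widetilde L_n : S\subseteq\supp(b)\}.
\]
A word \(b\) gives a pair in \(A_S\) exactly when each coordinate in \(S\) is one of \(i,j,k\) and each coordinate outside \(S\) is arbitrary in \(\{i,j,k,e\}\). Hence
\[
  |A_S|=3^{|S|}4^{n-|S|}.
\]

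The set \(A_S\) is \(C_3\)-invariant. The reason is that \(\tau\) leaves the second component \(S\) unchanged, and \(\gamma_S\) keeps the noncentral digits in \(S\) noncentral, so \(\supp(\gamma_S(b))=\supp(b)\) and the condition \(S\subseteq\supp(b)\) is preserved. By definition \(\pi(A_S)=L_n(S)\). Formula \eqref{eq:C3-invariant-subset} then gives
\[
  |L_n(S)|=\frac{|A_S|}{3}=3^{|S|-1}4^{n-|S|}.
\]

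For disjointness, suppose \(T\in L_n(S)\cap L_n(S')\). Proposition~\ref{prop:local-recognition} says the selected set of a local triangle is intrinsic: it is exactly the set of coordinates in which the three vertices take the three distinct values \(i,j,k\). Every other coordinate is constant across the three vertices. If \(T=T(b,S)\), then this set of varying coordinates equals \(S\), because the coordinates in \(S\) are cycled through \(a,\gamma(a),\gamma^2(a)\) and the remaining coordinates are fixed. The same reasoning applied to \(T=T(b',S')\) shows the set equals \(S'\). Hence \(S=S'\).

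Neither step is a real obstacle, since the substance is already contained in the two cited propositions. The only point needing care is to check that \(A_S\) is a union of \(C_3\)-orbits, so that division by \(3\) is legitimate. This follows from the freeness of the action and the fact that \(\tau\) fixes the \(S\)-component.
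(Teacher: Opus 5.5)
Your proof is correct and follows essentially the paper's own argument. You count the \(3^{|S|}4^{n-|S|}\) starting words, divide by the three cyclic representatives of each orbit, and get disjointness because the varying-coordinate set of Proposition~\ref{prop:local-recognition} is intrinsic to the triangle. The only difference is that you justify the division by \(3\) through \eqref{eq:C3-invariant-subset}, whereas the paper simply notes that each orbit has three representatives. Strictly, the \(C_3\)-invariance of \(A_S\) needs only that \(\tau\) fixes \(S\) and preserves \(\supp(b)\); freeness is what makes the division by \(3\) valid, and it is already built into that formula.
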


\begin{proof}
For fixed \(S\), each selected coordinate has three possible starting digits
and each unselected coordinate four, giving
\(3^{|S|}4^{n-|S|}\) starting words.  Every nondegenerate orbit has three
cyclic representatives.  Proposition~\ref{prop:local-recognition} shows
that \(S\) is exactly the varying-coordinate set, so a local triangle cannot
belong to two different \(L_n(S)\).
\end{proof}

\begin{theorem}[Number of local-cycle triangles]
For \(n\ge1\),
\begin{equation}\label{eq:Ln}
  |L_n|=\frac{7^n-4^n}{3}.
\end{equation}
\end{theorem}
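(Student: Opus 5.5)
The count follows by summing the fixed-selected-set counts over all nonempty \(S\). By Proposition~\ref{prop:fixed-S-count}, the classes \(L_n(S)\) are pairwise disjoint, and by definition \(L_n\) is their union over nonempty \(S\subseteq\{1,\ldots,n\}\). Hence
\[
  |L_n|=\sum_{\varnothing\ne S\subseteq\{1,\ldots,n\}}3^{|S|-1}4^{n-|S|}
       =\frac13\sum_{s=1}^{n}\binom{n}{s}3^{s}4^{n-s}.
\]

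Evaluate this sum with the binomial theorem. Adding and subtracting the \(s=0\) term \(4^n\) gives
\[
  \sum_{s=0}^{n}\binom{n}{s}3^s4^{n-s}=(3+4)^n=7^n,
\]
so \(|L_n|=(7^n-4^n)/3\), which is \eqref{eq:Ln}.

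As a cross-check, and an alternative route, use Proposition~\ref{prop:C3-local-representatives}. The set \(\widetilde L_n\) has cardinality \(\sum_{b}(2^{\nu(b)}-1)\). Each coordinate contributes a factor \(1\) if it is \(e\), and a factor \(3\cdot 2\) if it is noncentral, since the digit may be either selected or unselected. This gives \(|\widetilde L_n|=7^n-4^n\), and dividing by \(3\) via \eqref{eq:C3-invariant-subset} gives the same result.

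There is no genuine obstacle here. The only point needing care is disjointness, so that the sum does not overcount; that is already supplied by the intrinsic recognition of \(S\) in Proposition~\ref{prop:local-recognition}.
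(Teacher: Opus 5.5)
Your proof is correct and is essentially the paper's: you sum the disjoint fixed-$S$ counts $3^{|S|-1}4^{n-|S|}$ from Proposition~\ref{prop:fixed-S-count} over nonempty $S$ and evaluate by the binomial theorem. Your cross-check via $|\widetilde L_n|=7^n-4^n$ and the free $C_3$-action is also the paper's second derivation. The only difference there is bookkeeping: you split each coordinate's weight $7$ by digit, as $1+3\cdot 2$, while the paper splits it by selection status, as $4+3$.
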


\begin{proof}
By the preceding proposition,
\[
  |L_n|
    =\sum_{\varnothing\ne S\subseteq\{1,\ldots,n\}}
       3^{|S|-1}4^{n-|S|}
    =\frac13\sum_{k=1}^n\binom nk3^k4^{n-k}
    =\frac{7^n-4^n}{3}.
\]
\end{proof}

Proposition~\ref{prop:C3-local-representatives} gives a second derivation.
Indeed, in each coordinate a representative has four possibilities when the
coordinate is unselected and three when it is selected, so
\[
  |\widetilde L_n|=7^n-4^n,
\]
after removing the empty selection.  Since every \(C_3\)-orbit has size
\(3\), the same proposition gives \eqref{eq:Ln}.  Thus the division by
\(3\) records the orbit size, rather than merely correcting for three
choices of starting vertex.

\FloatBarrier
\begin{table}[t]
\centering
\caption{Completion, local-cycle, and nonlocal counts for small orders.}
\label{tab:completion-local}
\small
\begin{tabular}{crrrrrrr}
\toprule
\(n\)&\(N\)&\(B_0(n)\)&\(B_1(n)\)&\(B_2(n)\)&\(|E_n|\)&\(|L_n|\)&\(|X_n|\)\\
\midrule
1&2&3&3&0&1&1&0\\
2&4&72&36&12&20&11&9\\
3&8&1248&528&240&336&93&243\\
4&16&20352&8256&4032&5440&715&4725\\
5&32&327168&131328&65280&87296&5261&82035\\
6&64&5240832&2098176&1047552&1397760&37851&1359909\\
\bottomrule
\end{tabular}
\end{table}

\begin{corollary}[Number of nonlocal equilateral triangles]
For \(n\ge1\),
\begin{equation}\label{eq:Xn}
  |X_n|
   =|E_n|-|L_n|
   =\frac{16^n+3\cdot4^n-4\cdot7^n}{12}.
\end{equation}
The first values are
\[
  0,9,243,4725,82035,1359909,\ldots,
\]
and
\[
  \frac{|L_n|}{|E_n|}
  \sim4\left(\frac7{16}\right)^n.
\]
Thus locality is not asymptotically typical.
\end{corollary}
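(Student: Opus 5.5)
The plan is to subtract \eqref{eq:Ln} from \eqref{eq:En-total} and then simplify. Since \(L_n\subseteq E_n\) (every local orbit is a nondegenerate equilateral centroid triangle by \cite[Theorem~4.8]{Dement2026}), we have \(|X_n|=|E_n|-|L_n|\) with no overcounting. Putting both terms over the denominator \(12\) gives
\[
  \frac{16^n-4^n}{12}-\frac{4(7^n-4^n)}{12}
  =\frac{16^n+3\cdot4^n-4\cdot7^n}{12},
\]
which is the closed form in \eqref{eq:Xn}.

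For the listed values, I will evaluate this formula at \(n=1,\dots,6\). Alternatively I can read the differences \(|E_n|-|L_n|\) off Table~\ref{tab:completion-local}. For example, \(n=2\) gives \((256+48-196)/12=9\), and \(n=3\) gives \((4096+192-1372)/12=243\). The remaining orders are routine arithmetic and match the table column \(|X_n|\).

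For the asymptotic ratio, I will write
\[
  \frac{|L_n|}{|E_n|}=\frac{4(7^n-4^n)}{16^n-4^n}
  =4\left(\frac7{16}\right)^n\cdot\frac{1-(4/7)^n}{1-4^{-n}}.
\]
The last factor tends to \(1\), which gives \(|L_n|/|E_n|\sim4(7/16)^n\). Because \(7/16<1\), this ratio tends to \(0\), so locality is not asymptotically typical. None of these steps is a real obstacle. The only point needing care is the inclusion \(L_n\subseteq E_n\), which was already recorded with the definition of local cycles.
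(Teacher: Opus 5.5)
Your proof is correct and matches the paper's (implicit) argument. You use $L_n\subseteq E_n$, which holds by the cited Theorem~4.8, to subtract $|L_n|=(7^n-4^n)/3$ from $|E_n|=(16^n-4^n)/12$, and you factor $4(7/16)^n$ out of the ratio. Your closed form, the six listed values, and the limit $(1-(4/7)^n)/(1-4^{-n})\to 1$ all check out.
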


\begin{remark}[Global cycles]
For \(b\ne e_n\), the global orbit
\(\{b,\gamma(b),\gamma^2(b)\}\) is the local cycle represented by the
canonical selected set
\[
  S=\supp(b).
\]
This is the global special case of
Proposition~\ref{prop:C3-local-representatives}.  Since \(e_n\) is the
unique word fixed by \(\gamma\), the \(C_3\)-action on the remaining
\(4^n-1\) words is free, giving
\[
  \frac{4^n-1}{3}
\]
nontrivial global cycles.  The same family reappears in
Section~\ref{sec:multiplication} as the multiplication-generated
global-\(\gamma\) family.
\end{remark}

\section{Product points and product-centered triangles}\label{sec:product-points}

Definition~1.1 applies to both local and nonlocal equilateral triangles.
For local cycles, the product and center statements needed here are direct
consequences of \cite[Theorem~4.8]{Dement2026}.

As an immediate corollary of
\cite[Theorem~4.8(2)]{Dement2026}, we obtain the unsigned product identity
in the notation used here.

\begin{corollary}[Unsigned product identity for local cycles]
\label{cor:local-product}
Let \(T=T(b,S)\) be a local triangle, with
\(\varnothing\ne S\subseteq\supp(b)\), and let \(c=c(b,S)\) be obtained
from \(b\) by replacing the digits in \(S\) by \(e\).  Then
\[
  p(T)=c,\qquad C_T=P(c).
\]
\end{corollary}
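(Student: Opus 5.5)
The plan is to compute the product \(b\,\gamma_S(b)\,\gamma_S^2(b)\) coordinate by coordinate and then discard the global sign. The paper says this corollary follows directly from \cite[Theorem~4.8(2)]{Dement2026}, but a self-contained verification is short, so I would present both.

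Multiplication in \(F_n\) is coordinatewise quaternion multiplication with all local signs gathered into one global sign. The unsigned product \(u(b_1b_2b_3)\) is therefore the word whose \(r\)-th digit is the unsigned quaternion product of the three \(r\)-th digits. The definition of \(p(T)\) shows that the ordering of the three factors is irrelevant. I would use the ordering \(b,\gamma_S(b),\gamma_S^2(b)\) and split the coordinates into two cases.

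\emph{Unselected coordinate} \(r\notin S\). All three digits equal \(b_r\).
\begin{itemize}
\item If \(b_r=e\), the product is \(e\).
\item If \(b_r\in\{i,j,k\}\), then \(b_r^3=-b_r\), so the unsigned digit is \(b_r\).
\end{itemize}
In both cases the \(r\)-th digit of \(p(T)\) is \(b_r\), which agrees with \(c_r\).

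\emph{Selected coordinate} \(r\in S\). Here \(S\subseteq\supp(b)\), so the three digits are \(a,\gamma(a),\gamma^2(a)\) for some \(a\in\{i,j,k\}\), i.e. a cyclic rotation of \((i,j,k)\). Since \(ijk=jki=kij=-1\), the unsigned digit is \(e\), which agrees with \(c_r\).

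Hence \(p(T)=u(b\,\gamma_S(b)\,\gamma_S^2(b))=c(b,S)\), and therefore \(C_T=P(p(T))=P(c)\) by Definition~1.1. Alternatively, one can quote the signed identity of \cite[Theorem~4.8(2)]{Dement2026} and apply \(u\), after checking that its notation agrees with the present one: octal \(7\leftrightarrow e\), and the same canonical convention \(S\subseteq\supp(b)\).

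The only point needing care is the well-definedness of \(p(T)\) under reordering of the vertices. This was already noted before Definition~1.1: pairwise commutation or anticommutation of basis words changes only the global sign. With that in hand, no step is a genuine obstacle.
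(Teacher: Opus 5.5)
Your proof is correct. Your ``alternative'' route, quoting \cite[Theorem~4.8(2)]{Dement2026} and removing the sign, is exactly what the paper does. It cites the signed identity \(b\,\gamma_S(b)\,\gamma_S^2(b)=(-1)^{\nu(b)}c\) and applies \(u\), with no digitwise computation.

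Your main argument is the more elementary route. It derives the identity directly from the coordinatewise multiplication rule, so it does not depend on the preceding paper. It also shows that the result is a purely local fact about the two digit patterns \((a,a,a)\) and \((a,\gamma(a),\gamma^2(a))\).

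The citation additionally supplies the global sign, which your computation discards. You could recover it at no cost:
\begin{itemize}
\item each unselected noncentral coordinate contributes \(b_r^3=-b_r\);
\item each selected coordinate contributes \(a\gamma(a)\gamma^2(a)=-e\);
\item each unselected \(e\)-coordinate contributes \(+e\).
\end{itemize}
With \(S\subseteq\supp(b)\), the total sign is therefore \((-1)^{\nu(b)}\), which reproduces the signed form of Theorem~4.8(2).
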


\begin{proof}
By \cite[Theorem~4.8(2)]{Dement2026}, the cyclically ordered product of the
three vertices is \((-1)^{\nu(b)}c\).  Removing the global sign gives
\(p(T)=c\), and therefore \(C_T=P(c)\).
\end{proof}

Likewise, the center criterion \cite[Theorem~4.8(4)]{Dement2026} gives
directly, in the present notation, that \(T(b,S)\) is product-centered if
and only if
\begin{equation}\label{eq:pc-parity}
  \#\{s\in S:s<r\}\equiv0\pmod2
  \qquad
  \text{for every \(r\notin S\) with \(b_r\ne e\)}.
\end{equation}

The parity condition gives an exact count over all local cycles.

\begin{theorem}[Number of product-centered local triangles]\label{thm:pc-local-count}
Let \(H_0=1\), \(H_1=7\), and for \(n\ge2\) let
\begin{equation}\label{eq:Hrec}
  H_n=5H_{n-1}+5H_{n-2}.
\end{equation}
Then
\begin{equation}\label{eq:pc-local-count}
  |P_n\cap L_n|=\frac{H_n-4^n}{3}.
\end{equation}
The first values are
\[
  1,8,57,373,2342,14343,\ldots.
\]
\end{theorem}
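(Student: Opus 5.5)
The plan is to count local representatives rather than triangles, using Proposition~\ref{prop:C3-local-representatives}. Let
\[
  A_n=\{(b,S)\in\widetilde L_n:\ (b,S)\text{ satisfies the parity condition \eqref{eq:pc-parity}}\}.
\]
Condition \eqref{eq:pc-parity} depends only on \(S\) and on \(\supp(b)\). Both are unchanged by \(\tau:(b,S)\mapsto(\gamma_S(b),S)\), because \(\gamma\) fixes \(e\) and permutes \(\{i,j,k\}\). So \(A_n\) is \(C_3\)-invariant. The center criterion \cite[Theorem~4.8(4)]{Dement2026} says \(T(b,S)\in P_n\) exactly when \((b,S)\) satisfies \eqref{eq:pc-parity}, so \(\pi(A_n)=P_n\cap L_n\). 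By \eqref{eq:C3-invariant-subset}, \(|P_n\cap L_n|=|A_n|/3\). It therefore suffices to show \(|A_n|=H_n-4^n\).

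To count \(A_n\), I will temporarily allow \(S=\varnothing\). Let \(\widehat A_n\) be the set of all pairs \((b,S)\) with \(S\subseteq\supp(b)\), possibly empty, satisfying \eqref{eq:pc-parity}. When \(S=\varnothing\) the condition holds vacuously, since every count of preceding selected coordinates is \(0\). These empty-selection pairs contribute exactly \(4^n\) elements, so \(|A_n|=|\widehat A_n|-4^n\). The claim then reduces to \(|\widehat A_n|=H_n\).

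I will read each pair in \(\widehat A_n\) coordinate by coordinate, left to right, as a walk on two states. The state records the parity of the number of selected coordinates seen so far, and the walk starts in the even state. At each coordinate \(r\) there are three kinds of choice:
\begin{itemize}
\item select \(r\): there are \(3\) choices of digit \(b_r\in\{i,j,k\}\), and the parity flips;
\item leave \(r\) unselected with \(b_r=e\): there is \(1\) choice, allowed in either state, and the parity is unchanged;
\item leave \(r\) unselected with \(b_r\) noncentral: there are \(3\) choices, allowed only in the even state by \eqref{eq:pc-parity}, and the parity is unchanged.
\end{itemize}
Ordering the states as (even, odd), this gives the transfer matrix
\[
  M=\begin{pmatrix}4&3\\3&1\end{pmatrix},
  \qquad
  |\widehat A_n|=(1,0)\,M^n\,(1,1)^{\mathsf T}.
\]
The matrix \(M\) has trace \(5\) and determinant \(-5\). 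By Cayley--Hamilton, \(M^2=5M+5I\), so every entry sequence of \(M^n\), and hence \(|\widehat A_n|\), satisfies \eqref{eq:Hrec}. The initial values are \(|\widehat A_0|=1\), from the empty word, and \(|\widehat A_1|=4+3=7\), which match \(H_0\) and \(H_1\). This proves \eqref{eq:pc-local-count}.

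As a check, the even and odd end states at \(n=2\) carry totals \(16+9=25\) and \(12+3=15\). Their sum \(40\) agrees with \(H_2=5\cdot7+5\cdot1\), and it gives \((40-16)/3=8\), the second listed value.

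The only step needing real care is the identification of \(A_n\) with the representatives of \(P_n\cap L_n\). This requires \eqref{eq:pc-parity} to be exactly the product-centered condition, as stated in \cite[Theorem~4.8(4)]{Dement2026}, with no further dependence on which vertex of the triangle is taken as \(b\). The \(C_3\)-invariance argument above handles that dependence, since all three vertices share \(S\) and \(\supp(b)\). Once this is settled, the rest is the routine \(2\times2\) transfer-matrix computation.
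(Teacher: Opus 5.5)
Your proposal is correct and follows the paper's own proof essentially step for step. Both count pairs $(b,S)$ with $S$ temporarily allowed to be empty, using the parity transfer matrix $\begin{pmatrix}4&3\\3&1\end{pmatrix}$ with characteristic polynomial $x^2-5x-5$. Both then remove the $4^n$ empty selections and divide by $3$ via Proposition~\ref{prop:C3-local-representatives}, which applies because \eqref{eq:pc-parity} depends only on $S$ and on digits that $\gamma_S$ leaves unchanged.
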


\begin{proof}
Count admissible pairs \((b,S)\), temporarily allowing \(S=\varnothing\),
and track the parity of the number of selected coordinates already read.
Let \(E_m\) and \(O_m\) be the weighted counts after \(m\) coordinates with
even and odd selected parity.  At even parity, an unselected coordinate may
contain any of \(i,j,k,e\), while a selected coordinate has three choices
and changes parity.  At odd parity, \eqref{eq:pc-parity} forces an
unselected coordinate to be \(e\), while a selected coordinate again has
three choices and changes parity.  Thus
\[
  \binom{E_{m+1}}{O_{m+1}}
   =
   \begin{pmatrix}4&3\\3&1\end{pmatrix}
   \binom{E_m}{O_m},
  \qquad
  \binom{E_0}{O_0}=\binom10.
\]
Writing \(H_m=E_m+O_m\), the characteristic polynomial is
\(x^2-5x-5\), so \eqref{eq:Hrec} holds with \(H_0=1\), \(H_1=7\).
The empty selection contributes \(4^n\) pairs and must be removed.
The remaining admissible pairs form a \(C_3\)-invariant subset of
\(\widetilde L_n\): the action changes only selected digits, while
\eqref{eq:pc-parity} depends on \(S\) and on the unchanged digits outside
\(S\).  Thus the recurrence counts \(H_n-4^n\) admissible representatives,
and Proposition~\ref{prop:C3-local-representatives} passes from those
representatives to triangles by division by \(3\).  This gives
\eqref{eq:pc-local-count}.
\end{proof}

For comparison, exhaustive exact enumeration by
\eqref{eq:zplus}--\eqref{eq:kappa-lookup} gives the split in
Table~\ref{tab:pc} through order \(6\).  No closed formula for the nonlocal
column is asserted here.

\begin{table}[htbp]
\centering
\caption{Product-centered equilateral triangles for small orders.}
\label{tab:pc}
\begin{tabular}{crrr}
\toprule
\(n\)&\(|P_n|\)&\(|P_n\cap L_n|\)&\(|P_n\cap X_n|\)\\
\midrule
1&1&1&0\\
2&8&8&0\\
3&72&57&15\\
4&598&373&225\\
5&4622&2342&2280\\
6&34638&14343&20295\\
\bottomrule
\end{tabular}
\end{table}

\section{Main axes, reflection symmetry, and product points}\label{sec:reflection}

The centroid map intertwines the digitwise \(S_3\)-action with the
dihedral symmetries of the triangular tiling \cite{Dement2026}.  In
particular, each transposition of \(i,j,k\) acts as reflection in one of the
three main triangular axes.

Fix one such transposition \(\tau\), and let
\[
  A_n(\tau)=\{T\in E_n:\tau(T)=T\}.
\]
Let \(A_n\) be the union over the three transpositions, so \(A_n\) is the
set of equilateral centroid triangles symmetric about at least one main
axis.

There is also a weaker axis condition involving only the product point.
Let \(f_\tau\in\{i,j,k\}\) be the unique noncentral digit fixed by \(\tau\),
and put
\[
  Y_n(\tau)=\{T\in L_n:C_T
     \text{ lies on the main axis fixed by }\tau\}.
\]
The triangle itself need not be \(\tau\)-invariant.

\begin{proposition}[Local triangles with product point on a fixed axis]\label{prop:Y-axis}
For any one of the three main axes,
\begin{equation}\label{eq:Y-axis}
  |Y_n(\tau)|=\frac{5^n-2^n}{3}.
\end{equation}
\end{proposition}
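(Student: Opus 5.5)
The plan is to turn "\(C_T\) lies on the axis of \(\tau\)" into a digitwise condition on a representative \((b,S)\), and then count representatives coordinate by coordinate as in the proof of Theorem~\ref{thm:pc-local-count}.

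\emph{Reduction to a word condition.} By Corollary~\ref{cor:local-product}, for \(T=T(b,S)\) we have \(C_T=P(c)\), where \(c=c(b,S)\) replaces the digits in \(S\) by \(e\). The centroid map intertwines the digitwise action of \(\tau\) with the reflection \(\rho_\tau\) in the corresponding main axis \cite{Dement2026}, so \(\rho_\tau(P(c))=P(\tau(c))\). A point lies on the axis exactly when \(\rho_\tau\) fixes it. Hence \(P(c)\) is on the axis if and only if \(P(\tau(c))=P(c)\). Since \(P\) is injective (the integer realization lemma), this holds if and only if \(\tau(c)=c\). The transposition \(\tau\) fixes only \(e\) and \(f_\tau\), so \(\tau(c)=c\) means every digit of \(c\) lies in \(\{e,f_\tau\}\). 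Digits of \(c\) in \(S\) are \(e\) automatically, so the condition is: \(b_r\in\{e,f_\tau\}\) for every \(r\notin S\).

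\emph{Counting representatives.} I will count pairs \((b,S)\), temporarily allowing \(S=\varnothing\). Each coordinate contributes independently. A selected coordinate has \(3\) choices, since it must be noncentral. An unselected coordinate has \(2\) choices, namely \(e\) or \(f_\tau\). This gives \(5^n\) pairs. The empty selection accounts for \(2^n\) of them, so there are \(5^n-2^n\) admissible representatives in \(\widetilde L_n\).

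\emph{Passing to triangles.} The condition depends only on \(S\) and on the unselected digits, and \(\gamma_S\) changes only selected digits. So the set of admissible representatives is \(C_3\)-invariant. Proposition~\ref{prop:C3-local-representatives}, via \eqref{eq:C3-invariant-subset}, then gives \(|Y_n(\tau)|=(5^n-2^n)/3\).

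\emph{Main obstacle.} The only delicate step is the equivalence "\(P(c)\) on the axis \(\iff \tau(c)=c\)". It relies on two inputs: the exact equivariance \(P\circ\tau=\rho_\tau\circ P\), imported from \cite{Dement2026}, and the injectivity of \(P\). If the equivariance had to be checked directly, I would verify it digitwise from the centroid formula. The key facts are that \(\tau\) swaps two of the vectors \(v(i),v(j),v(k)\) and fixes the third, that this is exactly \(\rho_\tau\) applied to these vectors, and that the signs \(\sigma_r\) depend only on the positions of \(e\), which \(\tau\) preserves.
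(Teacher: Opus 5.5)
Your proposal is correct and follows the paper's proof essentially step for step. The steps are the reduction $C_T=P(c(b,S))$ via Corollary~\ref{cor:local-product}, equivariance plus injectivity to obtain $b_r\in\{e,f_\tau\}$ for $r\notin S$, the count of $5^n-2^n$ $C_3$-invariant representatives, and division by $3$ via Proposition~\ref{prop:C3-local-representatives}; your added digitwise check of $P\circ\tau=\rho_\tau\circ P$ (linearity of $\rho_\tau$, and $\sigma_r$ depending only on the positions of $e$) is a sound way to make the imported equivariance self-contained.
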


\begin{proof}
Let \(T=T(b,S)\) be a local triangle.  By
Corollary~\ref{cor:local-product},
\(C_T=P(c(b,S))\), where \(c(b,S)\) is obtained from \(b\) by replacing the
selected digits by \(e\).  The injective centroid map intertwines \(\tau\)
with reflection in the corresponding main axis.  Therefore \(C_T\) lies on
that axis if and only if
\[
  \tau(c(b,S))=c(b,S).
\]
Since \(\tau\) fixes exactly \(e\) and \(f_\tau\) among the four symbolic
digits, this is equivalent to
\[
  b_r\in\{e,f_\tau\}
  \qquad\text{for every }r\notin S.
\]
The set of local-cycle representatives \((b,S)\in\widetilde L_n\) of
Proposition~\ref{prop:C3-local-representatives} satisfying this condition is
\(C_3\)-invariant, since \(\gamma_S\) changes only selected coordinates.
For fixed nonempty \(S\), it has
\(3^{|S|}2^{n-|S|}\) representatives, and therefore its total cardinality is
\[
  \sum_{\varnothing\ne S\subseteq\{1,\ldots,n\}}
    3^{|S|}2^{n-|S|}
   =(2+3)^n-2^n
   =5^n-2^n.
\]
Proposition~\ref{prop:C3-local-representatives} now gives
\[
  |Y_n(\tau)|=\frac{5^n-2^n}{3}.
\]
\end{proof}

\begin{remark}[A reverse recurrence-to-geometry step]
The count \eqref{eq:Y-axis} is also the \(r=3\) member of the translated
Jacobsthal family in Appendix~A.  Once the neighboring cases \(r=2,4,5\)
had been identified with existing triangle classes, the missing \(r=3\)
term suggested looking for a local class with exactly two allowed
unselected digits.  The fixed alphabet \(\{e,f_\tau\}\) supplies precisely
such a class.  The proof above is independent of that heuristic.
\end{remark}

\subsection{Axis-supported equilateral triangles}\label{subsec:axis-supported}

We now impose a condition on the vertices themselves rather than only on a
product point or a reflection symmetry.  Let
\[
  E_n^{\mathrm{ax}}
  =
  \{\,T\in E_n:\text{ each vertex lies on a main axis}\,\}.
\]
The identity word \(e_n\), whose centroid is the common axis intersection, is
included in the three axes.

We now make use of Remark~\ref{rem:main-axes-coordinates} and determine the
possible integer values on these axes.  Put \(N=2^n\) and \(m=2^{n-1}\).
Let \(\tau_I\) be the transposition of \(j\) and \(k\), whose geometric action
is reflection in the \(I\)-axis.  By equivariance and injectivity of the
centroid map,
\[
  P(b)\text{ lies on the \(I\)-axis}
  \quad\Longleftrightarrow\quad
  \tau_I(b)=b.
\]
Since \(\tau_I\) fixes exactly the digits \(i\) and \(e\), the \(I\)-axis
words are exactly the words in \(\{i,e\}^n\), and their scaled integer
centroid coordinates have the form \(\Lambda_n(b)=(x,0)\).  By digitwise
\(S_3\)-equivariance, the corresponding coordinates on the other two axes
have the forms \((0,y)\) and \((-z,-z)\).

\begin{lemma}[Scaled integer coordinates on a main axis]\label{lem:axis-scalars}
Let \(R_n\) denote the set of parameters \(x\in\mathbb Z\) for which
\((x,0)\) occurs as an \(I\)-axis coordinate.  Then
\begin{equation}\label{eq:axis-scalars}
  R_n
  =
  \{\,x\in\mathbb Z:
      1-m\le x\le2m-1,\quad
      x\equiv0\text{ or }m\pmod3\,\}.
\end{equation}
The corresponding \(J\)- and \(K\)-axis coordinates are respectively
\((0,x)\) and \((-x,-x)\), with the same parameter \(x\in R_n\).
In particular, \(|R_n|=2m=2^n\).
\end{lemma}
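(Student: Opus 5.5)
We determine \(R_n\) by induction on \(n\), using the one-digit recursion from the proof of Lemma~\ref{lem:orientation-residues}:
\[
  \Lambda_{n+1}(ba)=2\Lambda_n(b)+(-1)^{\#\{r:b_r=e\}}\lambda(a).
\]
The words in question lie in \(\{i,e\}^n\), and \(\lambda(i)=(1,0)\), \(\lambda(e)=(0,0)\). So the first-coordinate parameter \(x_{n+1}\) of \(ba\) is \(2x_n\) when \(a=e\), and \(2x_n\pm1\) when \(a=i\). The sign is \(+\) if \(b\) has an even number of \(e\)'s and \(-\) otherwise.

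It is cleaner to prepend the first digit and read off the structure directly. Write \(b=b_1b'\) with \(b'\in\{i,e\}^{n-1}\). From \eqref{eq:Lambda}, the parameter is
\[
  x_n(b)=2^{n-1}\lambda_1(b_1)+\sigma\,x_{n-1}(b'),
\]
where \(\sigma=+1\) if \(b_1=i\) and \(\sigma=-1\) if \(b_1=e\). This gives
\[
  R_n=(m+R_{n-1})\cup(-R_{n-1}),\qquad m=2^{n-1},\qquad R_1=\{0,1\}.
\]
The claim is that the set \(A_n\) described in \eqref{eq:axis-scalars} satisfies the same recursion. The cardinality statement \(|R_n|=2^n\) then follows from the injectivity of \(\Lambda_n\), since the \(I\)-axis words are exactly \(\{i,e\}^n\) and there are \(2^n\) of them. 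Alternatively, once the recursion is verified, the union is disjoint because the two pieces lie in disjoint intervals or residue classes.

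The key verification is \(A_n=(m+A_{n-1})\cup(-A_{n-1})\). Let \(m'=m/2\), so that \(A_{n-1}=\{x:1-m'\le x\le 2m'-1,\ x\equiv0\text{ or }m'\pmod3\}\).

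Then \(-A_{n-1}\) is the set of integers in \([1-m,\,m'-1]\) with residue \(0\) or \(-m'\). Since \(m=2m'\equiv-m'\pmod3\), the residue \(-m'\) is the residue \(m\).

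Similarly, \(m+A_{n-1}\) is the set of integers in \([m'+1,\,2m-1]\) with residue \(m\) or \(m+m'\equiv3m'\equiv0\).

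The two intervals together cover \([1-m,\,2m-1]\) except the single point \(m'\) (for \(n\ge2\)). Note \(m'\not\equiv0\) and \(m'\not\equiv m\pmod3\), because \(m'\) is a power of \(2\) and \(m\equiv-m'\). So \(m'\) is excluded from \(A_n\) anyway, and the union equals \(A_n\).

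The base case \(R_1=\{0,1\}\) matches \(A_1\) with \(m=1\): the range is \([0,1]\) with residue \(0\) or \(1\).

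For the \(J\)- and \(K\)-axes, we apply the \(S_3\)-equivariance quoted just before the lemma. The digitwise maps sending \(i\) to \(j\) and \(i\) to \(k\) (fixing \(e\)) transform \(\lambda(i)=(1,0)\) into \((0,1)\) and \((-1,-1)\). Because \(\sigma_r\) depends only on the \(e\)-pattern, which these maps preserve, \(\Lambda_n\) of the image is \(x\lambda(j)\) or \(x\lambda(k)\). This gives coordinates \((0,x)\) and \((-x,-x)\) with the same parameter.

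The main obstacle is the bookkeeping in the residue and interval matching. In particular, we must check that the overlaps and gaps of \(m+A_{n-1}\) and \(-A_{n-1}\) occur exactly at the excluded point \(m'\). The residue facts \(2\equiv-1\pmod3\) and \(3\nmid m'\) handle this.
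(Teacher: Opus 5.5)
Your proof is correct, but it takes a different route from the paper's.

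The paper argues by a squeeze. First, it bounds every $I$-axis parameter by dyadic sums, giving $1-m\le x\le 2m-1$; negative contributions need a preceding $e$, so they are confined to positions $r\ge 2$. Second, it applies Lemma~\ref{lem:orientation-residues} with $Y=0$, which restricts $x$ to the residues $N-1$ and $N-2$, and these are exactly $0$ and $m$ modulo $3$. Third, it observes that the interval contains exactly $2m$ integers in those two classes. It then concludes equality because injectivity of $\Lambda_n$ already supplies $2m$ distinct values from the words $\{i,e\}^n$.

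You instead peel off the first digit to obtain the self-similar recursion $R_n=(m+R_{n-1})\cup(-R_{n-1})$. You then check that the target set satisfies the same recursion. The only bookkeeping is the omitted point $m'=2^{n-2}$, which lies in neither residue class.

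The paper's version is shorter given the lemmas already in place. However, it relies on the geometric injectivity of the centroid map and on a residue count over the interval.

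Your argument is self-contained and uses neither Lemma~\ref{lem:orientation-residues} nor injectivity. The residue classes emerge from the recursion rather than from orientation. You also get $|R_n|=2^n$, and hence injectivity on the axis words, for free from the disjointness of $[1-m,m'-1]$ and $[m'+1,2m-1]$. In addition, the recursion gives an explicit address for the word realizing each parameter.

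Your treatment of the $J$- and $K$-axes, substituting $j$ or $k$ for $i$ while keeping the same $e$-pattern, is the same as the paper's use of $\gamma$.
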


\begin{proof}
Consider the \(I\)-axis.  Its words are exactly the \(2^n=2m\) words in
\(\{i,e\}^n\).  Since \(\Lambda_n\) is injective, their integer coordinates
are \(2m\) distinct values.

For every such coordinate \(x\), \eqref{eq:Lambda} shows that each positive
contribution is bounded by its corresponding dyadic weight, so
\[
  x\le 2^{n-1}+2^{n-2}+\cdots+1=2^n-1=2m-1.
\]
A negative contribution can occur only after at least one earlier
\(e\)-digit, so the total magnitude of all possible negative contributions
is at most
\[
  2^{n-2}+2^{n-3}+\cdots+1=m-1.
\]
Hence every \(I\)-axis coordinate \(x\) satisfies
\[
  1-m\le x\le2m-1.
\]
Since \(Y=0\) on this axis, Lemma~\ref{lem:orientation-residues} says
\[
  x\equiv N-1\quad\text{or}\quad N-2\pmod3.
\]
Because \(N=2m\) and \(m\not\equiv0\pmod3\), these two residues are precisely
\(0\) and \(m\pmod3\).  The interval \([1-m,2m-1]\) contains exactly
\(2m\) integers in these two residue classes.  Since the \(I\)-axis already
supplies \(2m\) distinct coordinate values, they are precisely the integers
in \eqref{eq:axis-scalars}.

Applying the digitwise cycle
\(\gamma:i\mapsto j\mapsto k\mapsto i\) sends an \(I\)-axis word with
coordinate \((x,0)\) successively to \(J\)- and \(K\)-axis words with
coordinates \((0,x)\) and \((-x,-x)\).  Hence the same parameter set \(R_n\)
describes all three axes in the stated forms.
\end{proof}

\begin{theorem}[Axis-supported equilateral triangles]\label{thm:axis-supported}
For \(n\ge1\),
\begin{equation}\label{eq:axis-total}
  |E_n^{\mathrm{ax}}|
  =4^{n-1}+2^n-2.
\end{equation}
More precisely, after labeling the vertices by the three main axes in scaled
integer centroid coordinates as
\[
  P=(x,0),\qquad Q=(0,y),\qquad R=(-z,-z),
  \qquad x,y,z\in R_n,
\]
every nondegenerate triangle in \(E_n^{\mathrm{ax}}\) belongs to exactly one
of the two branches
\[
  E_n^{\mathrm{ax},=}
   :=\{\,T\in E_n^{\mathrm{ax}}:x=y=z\,\},
  \qquad
  E_n^{\mathrm{ax},0}
   :=\{\,T\in E_n^{\mathrm{ax}}:x+y+z=0\,\},
\]
and
\begin{align}
  |E_n^{\mathrm{ax},=}|&=2^n-1,\label{eq:axis-equal-branch}\\
  |E_n^{\mathrm{ax},0}|&=4^{n-1}-1.\label{eq:axis-zero-branch}
\end{align}
\end{theorem}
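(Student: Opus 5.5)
The plan is to reduce everything to the quadratic form \(q\) of \eqref{eq:qform} and Lemma~\ref{lem:axis-scalars}. First I would settle the labeling. The three axes meet only at \(P(e_n)\), the origin. I would show that a nondegenerate axis-supported triangle has at most one vertex on each axis, except that the origin may serve any axis. Three vertices on one line are collinear. If two vertices \((x,0),(x',0)\) lie on the \(I\)-axis and the third lies on the \(J\)- or \(K\)-axis, the third vertex must lie on the perpendicular bisector of the segment at height \(\tfrac{\sqrt3}{2}|x-x'|\). I would intersect this with the other axis (a line at \(60^\circ\) to the \(I\)-axis) and get an explicit linear relation, forcing one of \(x,x'\) to be \(0\). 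That is exactly the case where the origin is relabelled as a point of the missing axis. So every \(T\in E_n^{\mathrm{ax}}\) admits a labeling \(P=(x,0)\), \(Q=(0,y)\), \(R=(-z,-z)\) with \(x,y,z\in R_n\), and this labeling is unique unless a vertex is the origin.

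Next I would write the three squared side lengths:
\[
  q(P-Q)=x^2+xy+y^2,\qquad q(Q-R)=y^2+yz+z^2,\qquad q(R-P)=z^2+zx+x^2 .
\]
Subtracting these pairwise gives \((x-z)(x+y+z)=0\), \((y-x)(x+y+z)=0\) and \((z-y)(x+y+z)=0\). Hence either \(x+y+z=0\) or \(x=y=z\). Both hold only at \(x=y=z=0\), which is degenerate, so the branches are disjoint. Conversely every such triple gives equal sides, and nondegeneracy only excludes \((0,0,0)\) (any other coincidence of vertices forces all three to be the origin). For the equal branch, every \(x\in R_n\setminus\{0\}\) works and none of the vertices is the origin, so the labeling is unique. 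Lemma~\ref{lem:axis-scalars} then gives \(|R_n|-1=2^n-1\), which is \eqref{eq:axis-equal-branch}.

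For the zero branch I would count triples \((x,y,z)\in R_n^3\) with \(x+y+z=0\), \((x,y,z)\ne 0\). By Lemma~\ref{lem:axis-scalars}, every element of \(R_n\) is \(\equiv 0\) or \(m\pmod 3\), and \(m\not\equiv0\). A zero sum therefore forces all three residues to be \(0\) or all to be \(m\), in accordance with Theorem~\ref{thm:same-orientation}.
\begin{itemize}
  \item I would parametrize \(R_n\cap(3\mathbb Z)\) and \(R_n\cap(m+3\mathbb Z)\) as arithmetic progressions inside \([1-m,2m-1]\).
  \item I would count lattice points \((x,y)\) in each class with \(-(x+y)\) also in range; this is a hexagon-type region in the plane \(x+y+z=0\).
  \item I would then correct for the origin: a triple with one coordinate \(0\), say \((0,y,-y)\), describes the same triangle as the relabelings in which the origin plays another axis. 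I would count these \(y\) (those with \(\pm y\in R_n\)) and divide the overcount appropriately.
\end{itemize}
I would check the result against \(n=1,2\), where the target values are \(0\) and \(3\), before simplifying to \(4^{n-1}-1\). Adding \eqref{eq:axis-equal-branch} then gives \eqref{eq:axis-total}.

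I expect the main obstacle to be this last lattice count together with its bookkeeping. The asymmetric interval \([1-m,2m-1]\) and the two residue classes make the region for \((x,y)\) an irregular hexagon. On top of that, the identification of triangles containing the origin must be handled without double counting. My first attempt would be a direct sum over \(x\) of the number of admissible \(y\), done separately for each residue class. If that becomes messy, my fallback would be a bijective or inductive argument in \(n\), using \(R_{n+1}\) versus \(2R_n\) together with the \(\pm\) digit contributions, to derive the recurrence \(a_{n+1}=4a_n+3\).
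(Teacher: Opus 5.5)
Your labeling step, the factorizations such as \((x-z)(x+y+z)=0\), and the equal-branch count \(|R_n|-1\) follow the paper's proof. Using the perpendicular bisector instead of comparing values of \(q\) is an equivalent computation.

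\textbf{The real gap is the origin.} You claim the labeling is unique ``unless a vertex is the origin,'' and you plan to ``correct for the origin'' in the zero-sum branch by dividing out an overcount. That premise is false. In a nondegenerate triangle with the origin as a vertex, the other two vertices are nonzero. They cannot share an axis, since they would then be collinear with the origin. So each lies on exactly one axis, these two fix two of the labels, and the origin is forced onto the third axis. Hence \((x,y,z)\mapsto T\) is already a bijection from nonzero zero-sum triples in \(R_n^3\) onto \(E_n^{\mathrm{ax},0}\), and no division is needed. For \(n=3\), where \(R_3=\{-3,-2,0,1,3,4,6,7\}\):
\begin{itemize}
\item \((0,3,-3)\) gives \(\{eee,jej,ekk\}\);
\item \((3,0,-3)\) gives \(\{iei,eee,ekk\}\);
\item the six permutations of \((0,3,-3)\) are six distinct triangles, and all of them are needed to reach \(15=4^2-1\).
\end{itemize}
Any overcount correction therefore gives a wrong answer. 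Your planned checks at \(n=1,2\) would not catch this, because at those orders the zero branch contains no triangle through the origin.

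\textbf{The decisive count is also left undone.} The ``irregular hexagon'' you expect does not occur. If \(x,y,z\ge 1-m\) and \(x+y+z=0\), then each coordinate is at most \(2m-2<2m-1\). So the upper end of the interval in Lemma~\ref{lem:axis-scalars} never binds, and the region is a triangle. This is exactly what the paper exploits:
\begin{itemize}
\item write each residue class explicitly, as \(3a\) and \(3a+1\) when \(m=3h+1\), or as \(3a\) and \(3a+2\) when \(m=3h+2\);
\item shift, so each class becomes \(A+B+C=m-1\) or \(A+B+C=m-2\) with \(A,B,C\ge 0\) and the upper bounds automatic;
\item apply stars and bars: \(\binom{m+1}{2}+\binom{m}{2}=m^2=4^{n-1}\) triples including \((0,0,0)\);
\item remove the origin triple to get \(4^{n-1}-1\).
\end{itemize}
Your fallback recurrence \(a_{n+1}=4a_n+3\) is numerically consistent with this, but you give no argument for it.
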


\begin{proof}
Using \(q(u,v)=u^2-uv+v^2\) for squared lengths, up to the common scale
factor, we first justify labeling one vertex on each main axis.  If two
distinct vertices lie on the same axis, by symmetry write them as
\[
  P=(a,0),\qquad Q=(b,0),
\]
and put the third vertex on a second axis, say \(R=(0,c)\).  Then
\[
  q(R-P)=a^2+ac+c^2,
  \qquad
  q(R-Q)=b^2+bc+c^2.
\]
Equality of these squared lengths gives
\[
  (a-b)(a+b+c)=0.
\]
Since \(P\ne Q\), we have \(a\ne b\), so \(a+b+c=0\).  Substituting
\(c=-a-b\) and comparing with \(q(Q-P)=(a-b)^2\) then gives \(3ab=0\).
Thus one of the two same-axis vertices is the origin.  Since the origin lies
on all three main axes, it may be assigned to the remaining axis.  The case
in which all three vertices lie on one axis is degenerate.  Therefore every
nondegenerate member of \(E_n^{\mathrm{ax}}\) admits a labeling
\[
  P=(x,0),\qquad Q=(0,y),\qquad R=(-z,-z),
  \qquad x,y,z\in R_n.
\]
For a nondegenerate triangle this labeling is unique once the three axis
names are fixed.  If no vertex is the origin, each vertex lies on exactly
one main axis.  If one vertex is the origin, the other two nonzero vertices
lie on distinct axes and determine those two axis labels, while the origin
is assigned to the third.

For this labeling, the three squared side lengths, up to the same common
scale factor, are
\[
  q(Q-P)=x^2+xy+y^2,
\]
\[
  q(R-P)=x^2+xz+z^2,
  \qquad
  q(R-Q)=y^2+yz+z^2.
\]
Thus equality of the three side lengths gives
\[
  (y-z)(x+y+z)=0,
  \qquad
  (x-z)(x+y+z)=0.
\]
Hence either
\[
  x=y=z
  \qquad\text{or}\qquad
  x+y+z=0.
\]
The two branches meet only at \(x=y=z=0\), which is the degenerate origin
triple.

For the equal-coordinate branch, every nonzero \(x\in R_n\) gives the
global \(\gamma\)-orbit obtained from the unique \(I\)-axis word with axis
coordinate \(x\).  Therefore
\[
  |E_n^{\mathrm{ax},=}|=|R_n|-1=2^n-1.
\]

It remains to count the zero-sum branch.  We first include the degenerate
solution \((0,0,0)\).  Since \(m=2^{n-1}\), either
\(m=3h+1\) or \(m=3h+2\).  In all four shifted counting problems below,
the original interval bounds become \(0\le A,B,C\le m-1\), while the
prescribed sum is either \(m-1\) or \(m-2\).  Hence the upper bounds are
automatic and ordinary stars and bars applies.

If \(m=3h+1\), then \eqref{eq:axis-scalars} splits as
\[
  R_n
  =
  \{\,3a:-h\le a\le2h\,\}
  \ \cup\
  \{\,3a+1:-h\le a\le2h\,\}.
\]
For the residue-\(1\) class, for example,
\(-3h\le3a+1\le6h+1\) is equivalent to
\(-h-\tfrac13\le a\le2h\), hence to \(-h\le a\le2h\) because \(a\) is an
integer.
A zero-sum triple must use three entries from the same residue class.
For three residue-\(0\) entries, \(a+b+c=0\).  After shifting
\[
  A=a+h,\qquad B=b+h,\qquad C=c+h,
\]
we obtain
\[
  A+B+C=3h=m-1,
  \qquad A,B,C\ge0.
\]
Thus stars and bars gives
\[
  \binom{m+1}{2}
\]
solutions.  For three residue-\(1\) entries,
\(a+b+c=-1\), and the same shift gives
\[
  A+B+C=3h-1=m-2,
\]
hence
\[
  \binom m2
\]
solutions.

If \(m=3h+2\), then
\[
  R_n
  =
  \{\,3a:-h\le a\le2h+1\,\}
  \ \cup\
  \{\,3a+2:-h-1\le a\le2h\,\}.
\]
Again a zero-sum triple must lie wholly in one residue class.  The
residue-\(0\) class gives, after shifting by \(h\),
\[
  A+B+C=3h=m-2,
\]
and therefore \(\binom m2\) solutions.  For the residue-\(2\) class,
\(a+b+c=-2\); shifting each variable by \(h+1\) gives
\[
  A+B+C=3h+1=m-1,
\]
and therefore \(\binom{m+1}{2}\) solutions.

In either case the number of zero-sum triples including the origin triple is
\[
  \binom{m+1}{2}+\binom m2=m^2=4^{n-1}.
\]
Removing \((0,0,0)\) gives \eqref{eq:axis-zero-branch}.  Adding the two
nondegenerate branches proves \eqref{eq:axis-total}.
\end{proof}

\medskip
\noindent\textbf{Order-two example.}
For \(n=2\), Lemma~\ref{lem:axis-scalars} gives
\(R_2=\{-1,0,2,3\}\).  The equal-coordinate choice \(x=y=z=2\)
gives the global \(\gamma\)-orbit
\[
  \{ie,je,ke\},
  \qquad
  (2,0),\ (0,2),\ (-2,-2),
\]
where the displayed pairs are the scaled centroid coordinates.  The
zero-sum choice \((x,y,z)=(-1,-1,2)\) gives instead
\[
  \{ei,ej,ke\},
  \qquad
  (-1,0),\ (0,-1),\ (-2,-2),
\]
and the other permutations of \((-1,-1,2)\) give the remaining two
zero-sum triangles.  Thus already in order two the two branches contribute
three triangles each, for \(|E_2^{\mathrm{ax}}|=6\).  Order-three examples
of the equal-coordinate and zero-sum branches are given in Appendix~B.

\begin{remark}[A square hidden in the zero-sum count]
The identity
\[
  \binom{m+1}{2}+\binom m2=m^2
\]
shows that the two residue classes combine into a square count.  Thus the
zero-sum axis solutions, with the degenerate origin triple restored, are
equinumerous with an \(m\times m\) square, or equivalently with the
\(4^{n-1}\) order-\((n-1)\) base words.  Finding a natural digitwise
bijection would give a combinatorial explanation of this square count.
\end{remark}

We next count reflection-symmetric triangles in an ordinary finite
triangular point set.

\begin{lemma}[Axis-symmetric triangles in \(T_k\)]
Let \(f(k)\) be the number of equilateral triangles in the \(k\)-row
triangular point set \(T_k\) that are symmetric about one fixed main axis.
Then
\begin{equation}\label{eq:f-axis}
  f(k)=
  \begin{cases}
    \dfrac{k(3k-2)}8,&k\text{ even},\\[6pt]
    \dfrac{3(k^2-1)}8,&k\text{ odd}.
  \end{cases}
\end{equation}
\end{lemma}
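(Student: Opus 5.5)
The approach is to reduce axis symmetry to an explicit parametrization of lattice-aligned triangles in row coordinates, and then count by parity-restricted sums.

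\emph{Coordinates and the geometric reduction.} Place \(T_k\) in row coordinates: the points are \((a,b)\) with \(0\le b\le a\le k-1\), where \(a\) is the row index from the apex and \(b\) the position within the row. The fixed main axis through the apex is then the set of points with \(2b=a\), and reflection in it is \((a,b)\mapsto(a,a-b)\). Let \(T\) be an equilateral triangle fixed by this reflection. The reflection permutes the three vertices, and it cannot fix all three, since a nondegenerate triangle is not contained in a line. So it fixes exactly one vertex, the apex on the axis, and swaps the other two. Those two form a segment perpendicular to the axis, hence a horizontal segment lying in one row. An equilateral triangle with a horizontal side is lattice-aligned, either upright or inverted, with its third vertex in the row \(s\) above or below the base, where \(s\) is the side length. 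Conversely, every aligned triangle whose off-base vertex lies on the axis is symmetric. So \(f(k)\) equals the number of aligned triangles in \(T_k\) whose apex satisfies \(2b=a\).

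\emph{Parametrization.}
\begin{itemize}
\item \textbf{Upright triangles.} The apex is \((a,a/2)\) with \(a\) even, and the base is \((a+s,a/2)\), \((a+s,a/2+s)\). The base midpoint is automatically on the axis. The constraints reduce to \(s\ge1\) and \(a+s\le k-1\), so the contribution is
\[
  \sum_{0\le a\le k-1,\ a\text{ even}}(k-1-a).
\]
\item \textbf{Inverted triangles.} The base is \((a,b)\), \((a,b+s)\) and the apex is \((a+s,b+s)\). The axis condition \(2(b+s)=a+s\) forces \(a\equiv s\pmod 2\) and \(b=(a-s)/2\). The constraint \(b\ge0\) gives \(s\le a\), and the apex lying in \(T_k\) gives \(a+s\le k-1\). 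So the contribution is
\[
  \#\{(a,s):1\le s\le a,\ a+s\le k-1,\ a\equiv s\!\!\pmod 2\}.
\]
\end{itemize}

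\emph{Evaluation.} I would evaluate both sums separately for \(k\) even and \(k\) odd. For the inverted count, substitute \(a+s=2t\), \(a-s=2u\) with \(u\ge0\), \(t>u\) and \(2t\le k-1\), which turns it into a triangular-number sum. Adding the two contributions should give \(k(3k-2)/8\) for \(k\) even and \(3(k^2-1)/8\) for \(k\) odd. As a check, \(k=2\) gives \(1+0=1\) and \(k=3\) gives \(2+1=3\), matching the formula.

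\emph{Main obstacle.} The geometric reduction is clean. The main risk is the parity bookkeeping in the final summation: off-by-one errors at the endpoints \(a=k-1\) and in the parity of \(k-1\). I would guard against this by checking \(k=4,5\) directly against the formula.
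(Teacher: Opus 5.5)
Your proposal is correct and is essentially the paper's argument. You both use the same steps: the fixed vertex lies on the axis, the swapped pair forms a side perpendicular to it (you see this geometrically, the paper gets it from $3rs=0$), and you split according to whether that side lies away from or toward the corner the axis passes through. Your substitution turns the upright count into $\sum_t (k-1-2t)$ and the inverted count into $\sum_t t$, so the total is exactly the paper's $\sum_{t=0}^{\lfloor (k-1)/2\rfloor}(k-1-t)$, which evaluates to the stated formulas.
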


\begin{proof}
The case \(k=1\) is immediate.  Put \(m=k-1\).  We use the same oblique
lattice coordinates \((u,w)\) as above, for which the squared-length form is
\[
  q(x,y)=x^2-xy+y^2
\]
by \eqref{eq:qform}.  After translating the triangular point set and choosing
the lattice spacing as unit, \(T_k\) is represented by
\[
  \Omega_m
   =\{(x,y)\in\mathbb Z^2:x\ge0,\ y\le0,\ x-y\le m\}.
\]
Indeed, taking \(p\) lattice steps in the \(u\)-direction and \(q\) lattice
steps in the \(-w\)-direction gives (see
Figure~\ref{fig:translated-triangular-point-set} for the case \(m=3\))
\[
  (x,y)=(p,-q),\qquad
  p,q\in\mathbb Z_{\ge0},\quad p+q\le m,
\]
and every point of \(\Omega_m\) arises uniquely in this way.

Choose the main reflection that bisects these two side directions.  In the
\((u,w)\)-coordinates it is
\[
  \rho(x,y)=(-y,-x).
\]
Thus the axis is \(x=-y\), and its lattice points in \(\Omega_m\) are
\[
  P=(t,-t),\qquad 0\le t\le\left\lfloor\frac m2\right\rfloor.
\]
A reflection-invariant nondegenerate triangle has exactly one vertex fixed
by the reflection and its other two vertices exchanged.  Write
\[
  Q=P+(r,s),\qquad
  R=P+\rho(r,s)=P+(-s,-r).
\]
Since reflection already gives \(PQ=PR\), the triangle is equilateral if
and only if \(PQ=QR\).  By the quadratic form above,
\[
  PQ^2\propto q(r,s)=r^2-rs+s^2,
\]
whereas
\[
  R-Q=(-r-s,-r-s),
  \qquad
  QR^2\propto q(-r-s,-r-s)=(r+s)^2.
\]
\begin{figure}[H]
  \centering
  \includegraphics[width=.74\textwidth]{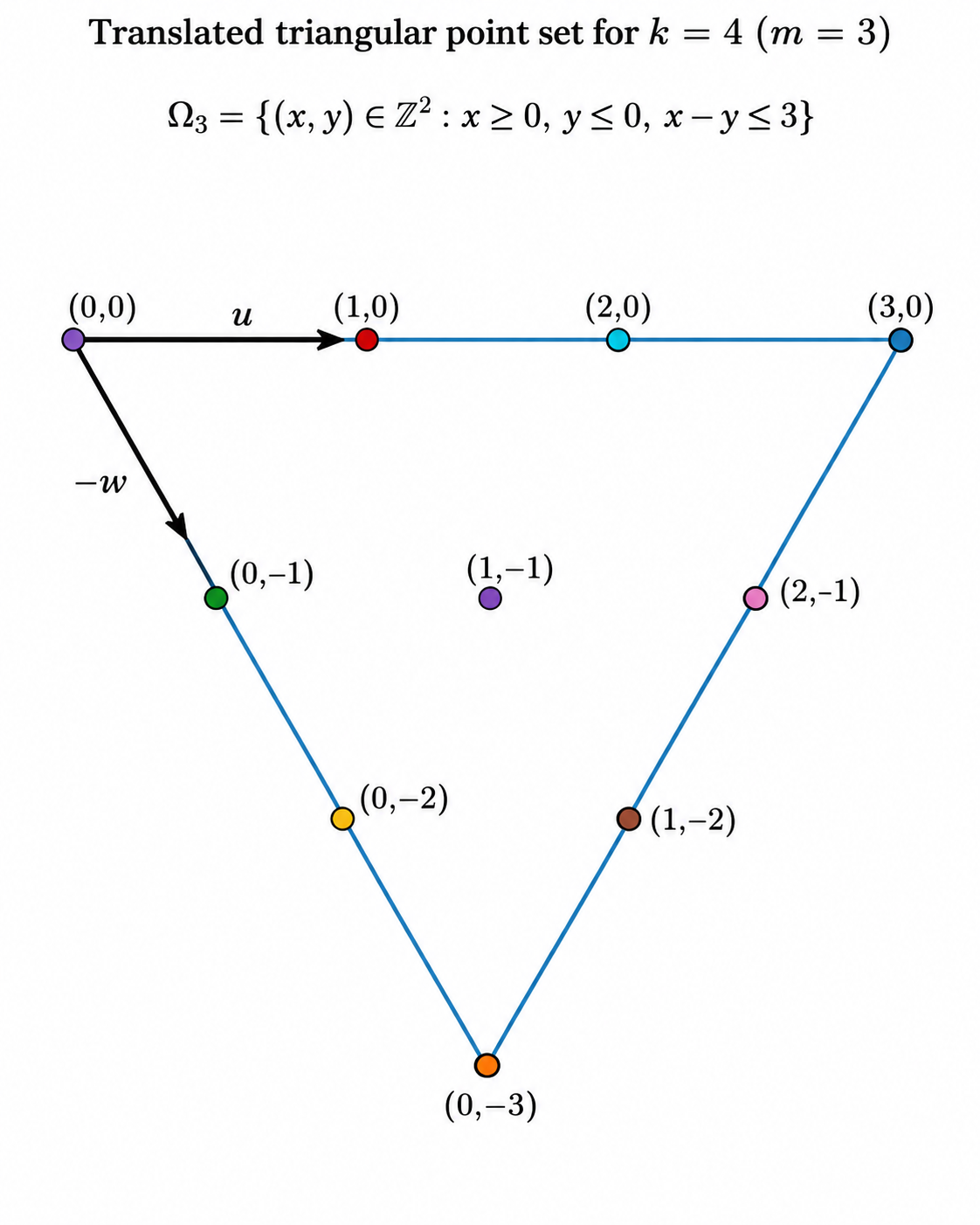}
  \caption{The translated triangular point set \(\Omega_m\) in the
  illustrative case \(k=4\) (so \(m=3\)).  The arrows indicate the
  \(u\)- and \(-w\)-directions used in the coordinate description of
  \(\Omega_m\).}
  \label{fig:translated-triangular-point-set}
\end{figure}
Hence the equilateral condition is
\[
  r^2-rs+s^2=(r+s)^2,
\]
or equivalently
\[
  3rs=0.
\]
Thus \(r=0\) or \(s=0\).  These cases differ only by interchanging \(Q\)
and \(R\), so take \(s=0\).

If \(r=d>0\), then
\[
  Q=(t+d,-t),\qquad R=(t,-t-d),
\]
and the boundary condition \(x-y\le m\) gives
\[
  1\le d\le m-2t.
\]
If \(r=-d<0\), then
\[
  Q=(t-d,-t),\qquad R=(t,-t+d),
\]
and the sign conditions \(x\ge0\), \(y\le0\) give
\[
  1\le d\le t.
\]
Therefore the axis point \(P=(t,-t)\) contributes
\[
  (m-2t)+t=m-t
\]
triangles.  Summing over the axis points,
\[
  f(k)=\sum_{t=0}^{\lfloor m/2\rfloor}(m-t).
\]
If \(k=2h\), then \(m=2h-1\) and
\[
  f(k)=\frac{h(3h-1)}2=\frac{k(3k-2)}8.
\]
If \(k=2h+1\), then \(m=2h\) and
\[
  f(k)=\frac{3h(h+1)}2=\frac{3(k^2-1)}8.
\]
This proves \eqref{eq:f-axis}.
\end{proof}

\begin{theorem}[Reflection-symmetric counts]\label{thm:reflection-counts}
For any fixed one of the three main axes,
\begin{align}
  |A_n(\tau)|&=3\cdot4^{n-1}-2^n,\label{eq:Aaxis}\\
  |A_n(\tau)\cap L_n|&=3^n-2^n,\label{eq:Aaxis-local}\\
  |A_n(\tau)\cap X_n|&=3\cdot4^{n-1}-3^n,\label{eq:Aaxis-nonlocal}\\
  |A_n(\tau)\cap L_n\cap P_n|&=F_{2n+2}-2^n.\label{eq:Aaxis-pc}
\end{align}
Counting each triangle only once when it is symmetric about more than one
axis gives
\begin{align}
  |A_n|&=9\cdot4^{n-1}-5\cdot2^n+2,\label{eq:Aall}\\
  |A_n\cap L_n|&=3^{n+1}-5\cdot2^n+2,\label{eq:Aall-local}\\
  |A_n\cap X_n|&=9\cdot4^{n-1}-3^{n+1},\label{eq:Aall-nonlocal}\\
  |A_n\cap L_n\cap P_n|
    &=3F_{2n+2}-5\cdot2^n+2.\label{eq:Aall-pc}
\end{align}
\end{theorem}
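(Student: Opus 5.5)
The plan is to compute each fixed-axis count in \eqref{eq:Aaxis}--\eqref{eq:Aaxis-pc} separately, and then pass to the union over the three axes by inclusion--exclusion.

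\textbf{Total count \eqref{eq:Aaxis}.} By the same-orientation theorem, a \(\tau\)-invariant triangle lies entirely in the upward class \(T_N\) or entirely in the downward class \(T_{N-1}\). The reflection fixed by \(\tau\) is a main axis of the whole tiling. It preserves each orientation class, and it acts on each of these triangular point sets as reflection in one of its main axes. The lemma on axis-symmetric triangles therefore gives \(|A_n(\tau)|=f(N)+f(N-1)\). With \(N\) even and \(N-1\) odd,
\[
  \frac{N(3N-2)}8+\frac{3((N-1)^2-1)}8=\frac{3N^2}4-N=3\cdot4^{n-1}-2^n .
\]

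\textbf{Local count \eqref{eq:Aaxis-local}.} Take \(\tau\) to swap \(j,k\) and fix \(i\). For a local triangle \(T=T(b,S)\), the image \(\tau(T)\) is again a local triangle, its varying set is \(S\), and its cyclic direction is reversed. By Proposition~\ref{prop:local-recognition}, \(\tau(T)=T\) holds iff two conditions hold:
\begin{enumerate}[label=(\roman*)]
\item every unselected digit is fixed by \(\tau\), that is, \(b_r\in\{e,i\}\) for \(r\notin S\);
\item \(\tau(b)=\gamma_S^t(b)\) for a single exponent \(t\).
\end{enumerate}
On selected digits we have \(\tau(i)=\gamma^0(i)\), \(\tau(j)=\gamma^1(j)\) and \(\tau(k)=\gamma^2(k)\), so condition (ii) says exactly that all selected digits of \(b\) are equal. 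Both conditions are preserved by \(\gamma_S\), so the set of such representatives is \(C_3\)-invariant. For fixed \(S\) it contains \(3\cdot2^{n-|S|}\) representatives. Summing over nonempty \(S\) gives \(3(3^n-2^n)\) representatives, and Proposition~\ref{prop:C3-local-representatives} then yields \(3^n-2^n\) triangles. The nonlocal count \eqref{eq:Aaxis-nonlocal} is the difference of \eqref{eq:Aaxis} and \eqref{eq:Aaxis-local}.

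\textbf{Product-centered count \eqref{eq:Aaxis-pc}.} Here I impose the parity criterion \eqref{eq:pc-parity} in addition to (i) and (ii). The only unselected noncentral digit now allowed is \(i\). Factor out the common selected digit, which contributes a global factor \(3\), and read the word with two parity states as in Theorem~\ref{thm:pc-local-count}. In the even state an unselected position has \(2\) choices (\(e\) or \(i\)). In the odd state an unselected position must be \(e\), so it has \(1\) choice. A selected position contributes weight \(1\) and switches the state. The transfer matrix is
\[
  M=\begin{pmatrix}2&1\\1&1\end{pmatrix},
  \qquad
  M^n=\begin{pmatrix}F_{2n+1}&F_{2n}\\F_{2n}&F_{2n-1}\end{pmatrix},
\]
and the column sum from the start vector \((1,0)\) is \(F_{2n+2}\). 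Remove the \(2^n\) words with \(S=\varnothing\), multiply by \(3\), and divide by the orbit size \(3\); this gives \(F_{2n+2}-2^n\). The set is again \(C_3\)-invariant, because \eqref{eq:pc-parity} depends only on \(S\) and on the unselected digits. The step I expect to need the most care is the closed form for \(M^n\), which I will prove by induction using \(M\) itself as the base case.

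\textbf{Union over the three axes, \eqref{eq:Aall}--\eqref{eq:Aall-pc}.} Reflections in two distinct main axes generate the full dihedral group \(D_3\). Hence any pairwise intersection of the sets \(A_n(\tau)\) equals the triple intersection \(I_n\), and inclusion--exclusion gives
\[
  |A_n|=3|A_n(\tau)|-2|I_n| .
\]
A \(D_3\)-invariant equilateral triangle is invariant under the \(120^\circ\) rotation about the axis intersection, so its center is \(P(e_n)=0\). Each vertex is fixed by one of the reflections, so each vertex lies on a main axis. Therefore \(I_n\) is the equal-coordinate branch \(E_n^{\mathrm{ax},=}\) of Theorem~\ref{thm:axis-supported}, with \(|I_n|=2^n-1\). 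These triangles are the nontrivial global \(\gamma\)-orbits, so they are local. They are also product-centered: \(S=\supp(b)\), so condition \eqref{eq:pc-parity} is vacuous. Consequently the same correction \(2(2^n-1)\) applies to the total, local and product-centered counts. Substituting \eqref{eq:Aaxis}, \eqref{eq:Aaxis-local} and \eqref{eq:Aaxis-pc} gives \eqref{eq:Aall}, \eqref{eq:Aall-local} and \eqref{eq:Aall-pc}. Subtracting \eqref{eq:Aall-local} from \eqref{eq:Aall} gives \eqref{eq:Aall-nonlocal}.
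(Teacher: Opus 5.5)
Your proposal is correct and follows the paper's proof essentially step for step: $f(N)+f(N-1)$ for the total, a fixed-$S$ sum for the local count, and the two-state matrix $\begin{pmatrix}2&1\\1&1\end{pmatrix}$ for the product-centered count; the paper writes this matrix as $\begin{pmatrix}1&1\\1&0\end{pmatrix}^{2}$, which replaces your induction for $M^n$. The union over the three axes then uses inclusion--exclusion with a triple intersection of size $2^n-1$, as in your proposal. The only minor variations are these: the paper counts each symmetric local triangle once through its unique $\tau$-fixed vertex, all of whose selected digits equal $f_\tau$, instead of dividing $C_3$-representatives by $3$, and it identifies the triple intersection directly as the size-$3$ digitwise $S_3$-orbits rather than through the equal-coordinate axis branch; both routes are equivalent.
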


\begin{proof}
The upward and downward centroid classes are copies of \(T_N\) and
\(T_{N-1}\), respectively, with \(N=2^n\), and their main reflection axes
are aligned with the corresponding digitwise transpositions.  The preceding
lemma gives
\[
  |A_n(\tau)|=f(N)+f(N-1)=3\cdot4^{n-1}-2^n.
\]

Since an involution acting on a three-element set fixes one element, a
\(\tau\)-invariant local triangle has a vertex fixed by \(\tau\).  For
fixed nonempty \(S\), after choosing this fixed vertex every selected digit
is \(f_\tau\), while each common unselected digit lies in
\(\{e,f_\tau\}\).  Thus \(S\) contributes \(2^{n-|S|}\) symmetric local
triangles, and
\[
  \sum_{\varnothing\ne S\subseteq\{1,\ldots,n\}}2^{n-|S|}
   =3^n-2^n.
\]
Subtracting from \eqref{eq:Aaxis} gives \eqref{eq:Aaxis-nonlocal}.

For product-centered symmetric local triangles, encode \(S\) by a binary
word.  At even selected parity an unselected coordinate may be either
\(e\) or \(f_\tau\); at odd parity the criterion
\eqref{eq:pc-parity} forces it to be \(e\).  A selected coordinate changes
parity.  Hence the two-state matrix is
\[
  \begin{pmatrix}2&1\\1&1\end{pmatrix}
  =
  \begin{pmatrix}1&1\\1&0\end{pmatrix}^{\!2}.
\]
Starting from even parity, the total weight of all length-\(n\) patterns is
\(F_{2n+2}\).  The empty selection contributes \(2^n\), proving
\eqref{eq:Aaxis-pc}.

It remains to correct overlaps among the three axes.  A triangle invariant
under two distinct reflections is invariant under the full digitwise
\(S_3\)-action.  Hence the three pairwise intersections of the axis-fixed
families all coincide with their triple intersection.  A nondegenerate
three-point \(S_3\)-invariant set must be a size-\(3\) orbit.  Such an orbit
is obtained by choosing a nonempty support \(S\), taking a word equal to one
fixed noncentral digit on \(S\) and \(e\) outside \(S\), and applying the
global \(S_3\)-action.  Hence there are \(2^n-1\) such triangles.  They are
local and product-centered.  Inclusion-exclusion therefore gives
\[
  3|A_n(\tau)|-3(2^n-1)+(2^n-1)
   =3|A_n(\tau)|-2(2^n-1),
\]
and the same correction applies to the local and product-centered local
subfamilies, yielding \eqref{eq:Aall}--\eqref{eq:Aall-pc}.
\end{proof}

\begin{remark}[Reflection-restricted product-centered counts]\label{rem:reflection-pc-enumeration}
The first four data columns of Table~\ref{tab:reflection} are given by
Theorem~\ref{thm:reflection-counts}.  The final two columns were obtained by
exhaustive exact enumeration through order \(6\).  No closed formula is
presently known for either
\[
  |A_n\cap P_n|
  \qquad\text{or}\qquad
  |A_n\cap X_n\cap P_n|.
\]
Since
\[
  |A_n\cap X_n\cap P_n|
   =|A_n\cap P_n|-|A_n\cap L_n\cap P_n|,
\]
and the local term is known exactly from Theorem~\ref{thm:reflection-counts},
a closed formula for either of the two unknown sequences would immediately
give the other.  Determining such a formula is left for future work.

At \(n=6\), the exact local product-centered count and the enumerated
nonlocal product-centered count coincide:
\[
  |A_6\cap L_6\cap P_6|
   =|A_6\cap X_6\cap P_6|
   =813.
\]
No structural explanation for this equality is presently known.  It may be
an isolated finite-order coincidence, reminiscent of the accidental
order-two symmetry enhancement in the preceding paper
\cite[Subsection~5.3]{Dement2026}.

\end{remark}

\begin{table}[htbp]
\centering
\caption{Triangles symmetric about at least one main axis.}
\label{tab:reflection}
\scriptsize
\setlength{\tabcolsep}{3.5pt}
\begin{tabular}{crrrrrr}
\toprule
\(n\)
&\(|A_n|\)
&\(|A_n\cap L_n|\)
&\(|A_n\cap X_n|\)
&\(|A_n\cap L_n\cap P_n|\)
&\(|A_n\cap P_n|\)
&\(|A_n\cap X_n\cap P_n|\)\\
\midrule
1&1&1&0&1&1&0\\
2&18&9&9&6&6&0\\
3&106&43&63&25&34&9\\
4&498&165&333&87&138&51\\
5&2146&571&1575&274&490&216\\
6&8898&1869&7029&813&1626&813\\
\bottomrule
\end{tabular}
\end{table}

\FloatBarrier
\section{The no-\texorpdfstring{\(e\)}{e} Sierpiński support}\label{sec:sierpinski}

Let
\[
  S_n=\{i,j,k\}^n\subseteq\Delta_n,
\]
and write
\[
  E_n^S=\{T\in E_n:T\subseteq S_n\},
  \qquad
  L_n^S=E_n^S\cap L_n.
\]
Its formal sum is the finite Sierpiński-type support \(S_E(n)\) introduced in
\cite[Example~5.9]{Dement2026}, where it is written in the octal alphabet
\(\{1,2,4\}\).  There it arises because it is fixed by every digit
permutation, hence symmetric with respect to all three triangular axes.
Here the same support appears because it is closed under every local cyclic
action.

\begin{proposition}[Local triangles inside \(S_n\)]
For every \(b\in S_n\) and every nonempty
\(S\subseteq\{1,\ldots,n\}\), the selected set \(S\) is admissible and
\[
  \gamma_S^t(b)\in S_n
  \qquad(t=0,1,2).
\]
The number of local equilateral triangles all of whose vertices lie in
\(S_n\) is
\begin{equation}\label{eq:LS}
  |L_n^S|=(2^n-1)3^{n-1}.
\end{equation}
\end{proposition}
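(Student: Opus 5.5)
Since \(b\in S_n\) has no \(e\)-digit, \(\supp(b)=\{1,\ldots,n\}\), so every nonempty \(S\) satisfies \(S\subseteq\supp(b)\) and is admissible.  The map \(\gamma\) permutes \(\{i,j,k\}\) and \(\gamma_S\) changes only the digits in \(S\).  Hence each \(\gamma_S^t(b)\) again has all digits in \(\{i,j,k\}\), which gives closure of \(S_n\) under the local cyclic actions.

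For the count we pass through local-cycle representatives.  Let
\[
  \widetilde L_n^S=\{(b,S)\in\widetilde L_n: b\in S_n\}.
\]
By the closure just shown, this set is \(C_3\)-invariant under \(\tau\cdot(b,S)=(\gamma_S(b),S)\).  Its image under \(\pi\) is exactly \(L_n^S\), by the following two inclusions.
\begin{itemize}[leftmargin=*]
  \item If \(b\in S_n\), all three vertices of \(T(b,S)\) lie in \(S_n\), so \(T(b,S)\in L_n^S\).
  \item Conversely, if \(T\in L_n\) has all vertices in \(S_n\), then any representative \((b,S)\) of \(T\) has \(b\in T\subseteq S_n\).
\end{itemize}
For the second inclusion it suffices to use Proposition~\ref{prop:C3-local-representatives}: the three representatives of \(T\) are its three vertices paired with the intrinsic set \(S\).

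Then \(|\widetilde L_n^S|=3^n(2^n-1)\): there are \(3^n\) words \(b\), and any of the \(2^n-1\) nonempty sets \(S\) may be paired with each.  Applying \eqref{eq:C3-invariant-subset} gives
\[
  |L_n^S|=\frac{3^n(2^n-1)}{3}=(2^n-1)3^{n-1}.
\]
As a cross-check, Proposition~\ref{prop:fixed-S-count} restricted to \(S_n\) gives, for each fixed \(S\), \(3^{|S|}3^{n-|S|}/3=3^{n-1}\) triangles.  The classes are disjoint and there are \(2^n-1\) choices of \(S\), which gives the same total.

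The only delicate point is the identification \(\pi(\widetilde L_n^S)=L_n^S\), specifically that no local triangle inside \(S_n\) is missed or double counted.  This follows from the intrinsic nature of \(S\) (Proposition~\ref{prop:local-recognition}) together with the freeness of the \(C_3\)-action.  Everything else is routine counting.
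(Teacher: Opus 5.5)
Your proposal is correct and takes essentially the same route as the paper. You restrict \(\widetilde L_n\) to \(b\in S_n\), observe that this subset has \(3^n(2^n-1)\) elements and is \(C_3\)-invariant, and divide by \(3\) via Proposition~\ref{prop:C3-local-representatives}. Your explicit check that \(\pi\) maps this subset onto \(L_n^S\), and your cross-check via Proposition~\ref{prop:fixed-S-count}, only spell out steps the paper leaves implicit.
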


\begin{proof}
There are no \(e\)-digits, so
\(\supp(b)=\{1,\ldots,n\}\), and \(\gamma\) permutes \(i,j,k\).
Thus the restriction of \(\widetilde L_n\) to \(b\in S_n\) consists of all
pairs
\[
  (b,S),\qquad b\in S_n,\quad
  \varnothing\ne S\subseteq\{1,\ldots,n\},
\]
and has cardinality \(3^n(2^n-1)\).  This subset is \(C_3\)-invariant, so
Proposition~\ref{prop:C3-local-representatives} gives
\[
  |L_n^S|
    =\frac{3^n(2^n-1)}3
    =(2^n-1)3^{n-1}.
\]
\end{proof}

\medskip
\noindent\textbf{OEIS cross-reference.}
The values
\[
  1,9,63,405,2511,15309,92583,\ldots
\]
of \(|L_n^S|\) form
\oeislink{https://oeis.org/A016137}{OEIS A016137} with the order index shifted
by one.  After adjoining the order-zero term \(0\), a comment by Paul Barry
on that entry identifies the sequence as the fourth binomial transform of
the Jacobsthal sequence
\oeislink{https://oeis.org/A001045}{A001045} \cite{OEIS}.  Appendix~A gives a
direct floretion realization of this relation using the order-two
recurrence-generator framework of \cite[Appendix~B]{Dement2026}.

\begin{proposition}[Fibonacci count for product-centered local triangles in \(S_n\)]
Let \(F_1=F_2=1\).  The number of product-centered local triangles contained
in \(S_n\) is
\begin{equation}\label{eq:CS}
  C_n^S=(F_{n+2}-1)3^{n-1}.
\end{equation}
\end{proposition}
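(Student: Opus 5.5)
The plan is to count local-cycle representatives \((b,S)\) with \(b\in S_n\), nonempty \(S\), satisfying the parity criterion \eqref{eq:pc-parity}, and then divide by \(3\) using Proposition~\ref{prop:C3-local-representatives}. As in the proof of \eqref{eq:LS}, every nonempty \(S\) is admissible for \(b\in S_n\). The pairs satisfying \eqref{eq:pc-parity} form a \(C_3\)-invariant set, since \(\gamma_S\) changes only selected digits while the condition depends on \(S\) and on whether unselected digits are noncentral. Membership in \(S_n\) is also preserved, because \(\gamma\) permutes \(i,j,k\).

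The key simplification is that in \(S_n\) every coordinate is noncentral. Hence \eqref{eq:pc-parity} becomes a condition on \(S\) alone: every \(r\notin S\) must have an even number of selected coordinates before it. The digits of \(b\) are then unconstrained, giving a factor \(3^n\) for each admissible \(S\). I would encode \(S\) as a binary word of length \(n\), with \(1\) marking selected positions. The condition says that every \(0\) occurs at even selected parity. Reading left to right, the selected positions must therefore pair up consecutively: after a \(1\) at even parity, the next letter must again be \(1\), unless the word ends there. So admissible words are concatenations of blocks \(0\) and \(11\), possibly followed by one final unpaired \(1\). This is the two-state machine of Theorem~\ref{thm:pc-local-count}, with transition matrix \(\begin{pmatrix}1&1\\1&0\end{pmatrix}\), where unselected letters carry weight \(1\) and are allowed only at even parity.

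To count, let \(g_m\) be the number of words of length \(m\) tiled by \(0\) and \(11\), so \(g_m=F_{m+1}\). The admissible words of length \(n\) number \(g_n+g_{n-1}=F_{n+1}+F_n=F_{n+2}\), where the second term covers a final unpaired \(1\). Removing the empty selection (the all-zero word) leaves \(F_{n+2}-1\) admissible nonempty patterns. The number of representatives is therefore \(3^n(F_{n+2}-1)\), and dividing by \(3\) gives \eqref{eq:CS}.

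The only delicate step is making sure the parity criterion is stated correctly: it must be imposed only at unselected positions, and the trailing unpaired \(1\) must be permitted because no later unselected coordinate tests it. I would check both points against the small cases \(n=1\) (count \(1\)) and \(n=2\) (patterns \(01,10\)? no, only \(01,11\); so \(2\cdot3=6\)). These agree with \((F_3-1)\cdot1=1\) and \((F_4-1)\cdot3=6\).
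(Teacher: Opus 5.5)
Your proposal is correct and follows essentially the same route as the paper. You reduce the parity criterion on \(S_n\) to a condition on \(S\) alone, encode admissible selections as words built from \(0\) and \(11\) with an optional final \(1\) (giving \(F_{n+2}\) words, hence \(F_{n+2}-1\) nonempty ones), and divide the \(3^n\) starting words per selection by the free \(C_3\)-orbit size; your explicit split \(F_{n+1}+F_n\) and the \(n=2\) check usefully fill in the count that the paper simply asserts.
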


\begin{proof}
The parity criterion \eqref{eq:pc-parity}, originating in
\cite[Theorem~4.8(4)]{Dement2026}, simplifies sharply on \(S_n\).
Every unselected digit is noncentral, so a local triangle
\(T(b,S)\subseteq S_n\) is product-centered if and only if
\begin{equation}\label{eq:Sn-parity}
  \#\{s\in S:s<r\}\equiv0\pmod2
  \qquad\text{for every }r\notin S.
\end{equation}
In particular, the condition depends only on \(S\), not on the starting
word \(b\).

Encode \(S\) by a binary word of length \(n\), writing \(1\) for a selected
coordinate and \(0\) otherwise.  Condition \eqref{eq:Sn-parity} says that a
\(0\) may occur only after an even number of preceding \(1\)'s.  Thus once
a \(1\) changes the running parity from even to odd, another \(1\) is
forced before a \(0\) can occur again.  Equivalently, the word is assembled
from blocks \(0\) and \(11\), with the possibility of one final unpaired
\(1\).

The number of such length-\(n\) binary words, including the all-zero word,
is \(F_{n+2}\).  Removing the empty selection leaves \(F_{n+2}-1\)
nonempty sets \(S\).

For a fixed nonempty \(S\), there are \(3^n\) choices of starting word
\(b\in S_n\).  The synchronized action
\[
  b\longmapsto\gamma_S(b)
\]
is free, since \(S\ne\varnothing\) and no coordinate of \(b\) is \(e\).
Its orbits therefore have size \(3\), and
Proposition~\ref{prop:C3-local-representatives} identifies each orbit with
one local triangle \(T(b,S)\).  Hence each admissible \(S\) contributes
\[
  \frac{3^n}{3}=3^{n-1}
\]
distinct local triangles.  Multiplying by the \(F_{n+2}-1\) admissible
nonempty selections gives \eqref{eq:CS}.
\end{proof}

The first values of \(C_n^S\) are
\[
  1,6,36,189,972,4860,24057,\ldots,
\]
and among local triangles in \(S_n\) the product-centered fraction is
\[
  \frac{F_{n+2}-1}{2^n-1}.
\]

As an independent computational check, exhaustive exact enumeration with
\eqref{eq:zplus}--\eqref{eq:kappa-lookup} gives
\[
  1,9,63,405,2511,15309,92583
\]
equilateral triples contained entirely in \(S_n\) for \(1\le n\le7\),
matching the local count \eqref{eq:LS}.  The argument below proves that this
agreement holds for every \(n\).

\begin{remark}[Digit-stripping strategy for Theorem~\ref{thm:noe-rigidity}]
\label{rem:noe-rigidity-strategy}
The main goal of this section is to show that locality is exhaustive on
\[
  S_n=\{i,j,k\}^n.
\]
To this end, let \(T=\{x,y,z\}\subseteq S_n\) be a nondegenerate
equilateral centroid triangle.  Choose an ordering of its vertices and
\(\varepsilon\in\{+,-\}\) such that, with
\[
  A=\Lambda_n(x),\qquad
  B=\Lambda_n(y),\qquad
  C=\Lambda_n(z),
\]
one has the oriented completion relation
\begin{equation}\label{eq:noe-oriented-completion}
  C=A+R_\varepsilon(B-A).
\end{equation}
Write the last digits of \(x,y,z\) as \(a,b,c\), and their prefixes as
\(x',y',z'\):
\[
  x=x'a,\qquad y=y'b,\qquad z=z'c,
  \qquad a,b,c\in\{i,j,k\}.
\]
Put
\[
  A'=\Lambda_{n-1}(x'),\qquad
  B'=\Lambda_{n-1}(y'),\qquad
  C'=\Lambda_{n-1}(z').
\]
Because no \(e\)-digit occurs, the sign factors in
\eqref{eq:Lambda} are all \(+1\), and the same last-digit decomposition
used in Lemma~\ref{lem:orientation-residues} becomes
\begin{equation}\label{eq:noe-last-digit}
  A=2A'+\lambda(a),\qquad
  B=2B'+\lambda(b),\qquad
  C=2C'+\lambda(c).
\end{equation}
Substituting \eqref{eq:noe-last-digit} into
\eqref{eq:noe-oriented-completion} and collecting prefix terms gives
\[
  2\bigl[C'-A'-R_\varepsilon(B'-A')\bigr]
  =
  \lambda(a)+R_\varepsilon\bigl(\lambda(b)-\lambda(a)\bigr)-\lambda(c).
\]
This motivates the notation
\begin{equation}\label{eq:Depsilon}
  D_\varepsilon(a,b)
  :=
  \lambda(a)+R_\varepsilon\bigl(\lambda(b)-\lambda(a)\bigr),
\end{equation}
so that
\begin{equation}\label{eq:noe-stripping-identity}
  2\bigl[C'-A'-R_\varepsilon(B'-A')\bigr]
   =D_\varepsilon(a,b)-\lambda(c).
\end{equation}
The left-hand side lies in \(2\mathbb Z^2\).  Hence
\begin{equation}\label{eq:noe-digit-congruence}
  D_\varepsilon(a,b)\equiv\lambda(c)
  \pmod{2\mathbb Z^2}.
\end{equation}
The next lemma shows that, on the no-\(e\) alphabet, considerably more is
true: the congruence forces an exact digit identity and fixes the cyclic
order of every varying coordinate.

The proof of Theorem~\ref{thm:noe-rigidity} will therefore follow the chain
\[
\begin{gathered}
\text{global oriented equilateral relation}\\
\Downarrow\\
\text{mod-\(2\) last-digit constraint}\\
\Downarrow\\
\text{exact digit identity}\\
\Downarrow\\
\text{the same oriented relation for the prefixes}\\
\Downarrow\\
\text{the same \(\varepsilon\) at every coordinate}\\
\Downarrow\\
\text{one common \(\gamma\)-direction}\\
\Downarrow\\
\text{locality by Proposition~\ref{prop:local-recognition}.}
\end{gathered}
\]
Thus the argument iterates an affine relation rather than inducting on
nondegenerate prefix triangles.  We shall see that the prohibition on
\(e\) is not just simplifying the coordinates; it is the actual source of
the rigidity.
\end{remark}

\begin{lemma}[No-\(e\) forcing of digit triples]\label{lem:noe-digit-forcing}
Let \(a,b,c\in\{i,j,k\}\) and \(\varepsilon\in\{+,-\}\).  Suppose
\[
  D_\varepsilon(a,b)\equiv\lambda(c)
  \pmod{2\mathbb Z^2}.
\]
Then exactly one of the following occurs:
\[
\begin{array}{c|cc}
 & b & c\\
\hline
\varepsilon=- & a & a\\
              & \gamma(a) & \gamma^2(a)\\[1mm]
\varepsilon=+ & a & a\\
              & \gamma^2(a) & \gamma(a).
\end{array}
\]
Moreover, in every allowed case the congruence upgrades to the exact
identity
\begin{equation}\label{eq:noe-digit-exact}
  D_\varepsilon(a,b)=\lambda(c)
  \qquad\text{in }\mathbb Z^2.
\end{equation}
\end{lemma}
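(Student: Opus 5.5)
The approach is a finite verification, reduced by the cyclic symmetry to a single starting digit.

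First I will record that \(\gamma\) acts linearly on \(\lambda\)-values. Let \(G(x,y)=(-y,x-y)\). Then \(G\lambda(i)=\lambda(j)\), \(G\lambda(j)=\lambda(k)\) and \(G\lambda(k)=\lambda(i)\). One checks directly that \(G=R_+^2\), the rotation through \(120^\circ\) in the oblique basis \((u,w)\). Hence \(G\) commutes with \(R_\pm\) from \eqref{eq:rot60}, and by linearity
\[
  D_\varepsilon(\gamma a,\gamma b)=G\,D_\varepsilon(a,b).
\]
Since \(G\in GL_2(\mathbb Z)\) preserves \(2\mathbb Z^2\), both the congruence hypothesis and the exact identity \eqref{eq:noe-digit-exact} are invariant under replacing \((a,b,c)\) by \((\gamma a,\gamma b,\gamma c)\). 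The table is invariant in the same way. It therefore suffices to treat \(a=i\).

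Next comes one observation that makes \(c\) well determined. Modulo \(2\), the values \(\lambda(i)\equiv(1,0)\), \(\lambda(j)\equiv(0,1)\) and \(\lambda(k)\equiv(1,1)\) are exactly the three nonzero classes of \(\mathbb Z^2/2\mathbb Z^2\). So for given \((a,b,\varepsilon)\), the congruence has a solution \(c\in\{i,j,k\}\) if and only if \(D_\varepsilon(a,b)\not\equiv(0,0)\), and then the solution is unique. This uniqueness is also why "exactly one" of the listed cases occurs. The missing zero class corresponds to \(\lambda(e)\); this is where excluding \(e\) enters.

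Finally I will compute the six cases \(b\in\{i,j,k\}\), \(\varepsilon=\pm\), with \(a=i\) and \(\lambda(a)=(1,0)\), using \(R_+(r,s)=(s,s-r)\) and \(R_-(r,s)=(r-s,r)\).
\begin{itemize}
\item For \(b=i\), \(D_\pm=(1,0)=\lambda(i)\), so \(c=a\), with exact equality.
\item For \(b=j\), the difference is \((-1,1)\). This gives \(D_-=(-1,-1)=\lambda(k)\), so \(c=\gamma^2(a)\) exactly. It gives \(D_+=(2,2)\equiv0\), so there is no solution.
\item For \(b=k\), the difference is \((-2,-1)\). This gives \(D_+=(0,1)=\lambda(j)\), so \(c=\gamma(a)\) exactly. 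It gives \(D_-=(0,-2)\equiv0\), so there is no solution.
\end{itemize}
These outcomes reproduce the table precisely, and in every surviving case \(D_\varepsilon(a,b)\) equals \(\lambda(c)\) on the nose, not just modulo \(2\).

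I expect the only real obstacle to be verifying the equivariance cleanly, namely that \(G\) commutes with \(R_\pm\). If that proves awkward, the fallback is to carry out the direct check for all \(18\) triples \((a,b,\varepsilon)\), which is equally routine.
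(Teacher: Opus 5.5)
Your proof is correct and is essentially the paper's argument: a direct case computation of \(D_\varepsilon(a,b)\), combined with the fact that \(\lambda(i),\lambda(j),\lambda(k)\) are exactly the three nonzero classes modulo \(2\); your \(\gamma\)-equivariance only reduces the check to \(a=i\). One small slip: your \(G\) equals \(R_-^2\), not \(R_+^2\) (for instance \(R_+^2(1,0)=(-1,-1)\ne(0,1)=G(1,0)\)), but \(R_-^2\) is still a rotation commuting with \(R_\pm\), so the reduction and your six computed cases all stand.
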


\begin{proof}
Modulo \(2\), the three vectors
\[
  \lambda(i)=(1,0),\qquad
  \lambda(j)=(0,1),\qquad
  \lambda(k)=(-1,-1)
\]
are exactly the three nonzero elements of \(\mathbb F_2^2\).
Using
\[
  R_+(r,s)=(s,s-r),
  \qquad
  R_-(r,s)=(r-s,r),
\]
direct substitution in \eqref{eq:Depsilon} gives
\[
\begin{array}{c|ccc}
 & b=a & b=\gamma(a) & b=\gamma^2(a)\\
\hline
D_-(a,b) &
 \lambda(a) &
 \lambda(\gamma^2(a)) &
 0 \pmod{2}\\
D_+(a,b) &
 \lambda(a) &
 0 \pmod{2} &
 \lambda(\gamma(a)).
\end{array}
\]
Since \(\lambda(c)\) is never \(0\pmod2\), the zero-residue cases are
impossible.  In each remaining case, direct substitution of
\(\lambda(i),\lambda(j),\lambda(k)\) gives equality in \(\mathbb Z^2\).
For example,
\[
  D_-(i,j)=(-1,-1)=\lambda(k),
  \qquad
  D_+(i,j)=(2,2)\equiv(0,0)\pmod2.
\]
This proves both the stated alternatives and
\eqref{eq:noe-digit-exact}.
\end{proof}

\begin{theorem}[Locality is exhaustive on \(S_n\)]\label{thm:noe-rigidity}
Every nondegenerate equilateral centroid triangle contained in
\(S_n=\{i,j,k\}^n\) is a local \(\gamma\)-cycle.  Equivalently,
\begin{equation}\label{eq:ES=LS}
  E_n^S=L_n^S
  \qquad(n\ge1).
\end{equation}
Consequently,
\begin{equation}\label{eq:ES-count}
  |E_n^S|=(2^n-1)3^{n-1}.
\end{equation}
\end{theorem}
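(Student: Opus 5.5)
We follow the chain laid out in Remark~\ref{rem:noe-rigidity-strategy}. Let \(T=\{x,y,z\}\subseteq S_n\) be a nondegenerate equilateral centroid triangle. Since the two Euclidean completions of a segment are \(A+R_\pm(B-A)\), we can order the vertices and choose \(\varepsilon\) so that \eqref{eq:noe-oriented-completion} holds. The aim is to prove, by downward induction on the coordinate index, the following claim. For every \(r\), the truncations \(x_{\le r},y_{\le r},z_{\le r}\) satisfy the same oriented relation
\[
  \Lambda_r(z_{\le r})=\Lambda_r(x_{\le r})+R_\varepsilon\bigl(\Lambda_r(y_{\le r})-\Lambda_r(x_{\le r})\bigr),
\]
with the same \(\varepsilon\). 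This is an affine identity, not a statement about a nondegenerate triangle, so it makes sense even when some prefixes coincide.

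\textbf{Inductive step.} Suppose the relation holds at level \(r\), and write \(a,b,c\) for the \(r\)-th digits. The last-digit decomposition \eqref{eq:noe-last-digit} is valid because there are no \(e\)-digits. It gives the stripping identity \eqref{eq:noe-stripping-identity}, and hence the congruence \eqref{eq:noe-digit-congruence}. Lemma~\ref{lem:noe-digit-forcing} then upgrades this congruence to the exact identity \eqref{eq:noe-digit-exact}. Consequently the right side of \eqref{eq:noe-stripping-identity} is zero. Since \(2v=0\) in \(\mathbb Z^2\) forces \(v=0\), the prefixes satisfy the same oriented relation with the same \(\varepsilon\). Iterating down to \(r=1\) (the empty prefix, with \(\Lambda_0=0\), holds trivially) shows that the oriented relation holds coordinatewise in the following sense. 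At every position \(r\), the digit triple \((x_r,y_r,z_r)\) is one of the two cases in Lemma~\ref{lem:noe-digit-forcing} for the fixed \(\varepsilon\).

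\textbf{Common cyclic direction.} Read off the table in Lemma~\ref{lem:noe-digit-forcing}. If \(\varepsilon=-\), each coordinate is either constant \((a,a,a)\) or of the form \((a,\gamma(a),\gamma^2(a))\). If \(\varepsilon=+\), each coordinate is constant or of the form \((a,\gamma^2(a),\gamma(a))\), which is \(\gamma^{-1}\)-directed. In both cases every varying coordinate has the single cyclic direction determined by \(\varepsilon\). At least one coordinate varies, because the vertices are distinct. Proposition~\ref{prop:local-recognition} therefore shows \(T\in L_n\), with \(S\) equal to the set of varying coordinates; no \(e\)-digits appear, so \(S\subseteq\supp(x)\) automatically. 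This gives \(E_n^S\subseteq L_n^S\). The reverse inclusion holds by definition, and the count \eqref{eq:ES-count} then follows from \eqref{eq:LS}.

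\textbf{Main obstacle.} The delicate point is the claim that the oriented relation descends to prefixes with \emph{the same} \(\varepsilon\) even when the prefix triangle degenerates. If a prefix triangle is degenerate (for instance all prefixes equal), one cannot re-choose an orientation. The argument must therefore avoid any appeal to nondegeneracy of the prefix triangle and instead carry the affine identity itself, as above. For this to work, the exactness statement \eqref{eq:noe-digit-exact} has to be checked in all six allowed cases, not merely modulo \(2\); this is a direct substitution. It is also worth confirming that the digit-forcing table rules out mixed directions: flipping direction at some coordinate would correspond to the opposite \(\varepsilon\), whose row in the table lands in the zero residue class modulo \(2\), which \(\lambda(c)\) never occupies.
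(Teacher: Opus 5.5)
Your proposal is correct and follows essentially the same route as the paper's proof. You carry the oriented affine relation \(C=A+R_\varepsilon(B-A)\) down through the prefixes using the stripping identity and Lemma~\ref{lem:noe-digit-forcing}, which forces every varying coordinate into the single cyclic direction fixed by \(\varepsilon\), and you conclude with Proposition~\ref{prop:local-recognition} and the count \eqref{eq:LS}; your point that no nondegeneracy of the prefixes is needed matches the paper's own emphasis on iterating the affine relation rather than inducting on prefix triangles.
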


\begin{proof}
The case \(n=1\) is immediate.  Let \(n\ge2\) and take
\(T=\{x,y,z\}\in E_n^S\).  Use the ordering and notation of
Remark~\ref{rem:noe-rigidity-strategy}.  Equation
\eqref{eq:noe-stripping-identity} gives
\[
  2\bigl[C'-A'-R_\varepsilon(B'-A')\bigr]
   =D_\varepsilon(a,b)-\lambda(c),
\]
and reduction modulo \(2\) gives
\[
  D_\varepsilon(a,b)\equiv\lambda(c)\pmod{2\mathbb Z^2}.
\]
Lemma~\ref{lem:noe-digit-forcing} shows that the last coordinate is either
constant or contains \(i,j,k\) in the cyclic direction determined by the
single sign \(\varepsilon\).  More importantly, it upgrades the congruence
to the exact identity
\[
  D_\varepsilon(a,b)=\lambda(c).
\]
The right-hand side of \eqref{eq:noe-stripping-identity} therefore
vanishes, and
\begin{equation}\label{eq:noe-prefix-relation}
  C'=A'+R_\varepsilon(B'-A').
\end{equation}

Thus stripping the last digit preserves the same oriented affine relation
with the same \(\varepsilon\).  No nondegeneracy assumption is needed for
the intermediate prefixes: relation \eqref{eq:noe-prefix-relation} remains
valid whether the three prefix points are distinct or coincide.  Iterating
the relation coordinate by coordinate shows that every coordinate of the
ordered triple \((x,y,z)\) is either constant or contains \(i,j,k\), and
every varying coordinate has one common cyclic direction: the forward
\(\gamma\)-direction when \(\varepsilon=-\), and the reverse direction when
\(\varepsilon=+\).

Since \(T\) is nondegenerate, at least one coordinate varies.
Proposition~\ref{prop:local-recognition} now recognizes \(T\) as a local
\(\gamma\)-cycle.  Hence \eqref{eq:ES=LS} holds, and
\eqref{eq:ES-count} follows from \eqref{eq:LS}.
\end{proof}

\section{Multiplication-generated equilateral triangles}\label{sec:multiplication}

Before the local-cycle construction, examples from the online multiplier
already suggested a connection between multiplication and centroid
geometry.  For some pairs of positive base vectors, the unsigned product
gives a third base vector whose centroid completes an equilateral triangle.

\begin{definition}[Multiplication-generated triangle]
An equilateral triple \(T\in E_n\) is \emph{multiplication-generated} if its
vertices can be written
\[
  T=\{b,c,u(bc)\},
  \qquad b,c\in\Delta_n,
\]
where \(u(\pm d)=d\) denotes the unsigned base vector.  Write
\[
  \mathcal M_n
   :=\{T\in E_n:T\text{ is multiplication-generated}\}.
\]
\end{definition}

\begin{proposition}[Multiplication and the product point]
\label{prop:mult-product-point}
For \(T\in E_n\), the following are equivalent:
\begin{enumerate}[label=\textup{(\roman*)}]
\item \(T\) is multiplication-generated;
\item \(p(T)=e_n\);
\item \(C_T=0\).
\end{enumerate}
Equivalently,
\[
  \mathcal M_n
   =\{T\in E_n:p(T)=e_n\}
   =\{T\in E_n:C_T=0\}.
\]
\end{proposition}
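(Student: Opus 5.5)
The argument reduces to two algebraic facts about the signed basis group \(F_n\): every positive basis word squares to \(\pm e_n\) (by \eqref{eq:square-orientation}), and any two positive words commute or anticommute. The remaining geometric input is injectivity of the centroid map, so that \(P(b)=0\) forces \(b=e_n\).

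For (i)\(\Rightarrow\)(ii), suppose \(T=\{b,c,d\}\) with \(d=u(bc)\), so \(bc=\pm d\). The unsigned product is independent of the order of the factors, so we may compute
\[
  b\,c\,d=\pm\,(bc)(bc)=\pm(bc)^2 .
\]
Since \(bc\) is \(\pm\) a positive word, \((bc)^2=\pm e_n\). Hence \(p(T)=u(bcd)=e_n\).

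For (ii)\(\Rightarrow\)(i), suppose \(T=\{b_1,b_2,b_3\}\) with \(b_1b_2b_3=\pm e_n\). Multiply on the right by \(b_3\) and use \(b_3^2=\pm e_n\) to get \(b_1b_2=\pm b_3\). Thus \(u(b_1b_2)=b_3\), and \(T=\{b_1,b_2,u(b_1b_2)\}\) is multiplication-generated. The vertices are already distinct because \(T\in E_n\), so no degeneracy check is needed. Note that the same-orientation theorem is not required here, since we only use that each individual square is scalar.

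For (ii)\(\Leftrightarrow\)(iii), note \(P(e_n)=0\) because every digit of \(e_n\) is \(e\) and \(v(e)=0\). Conversely, if \(C_T=P(p(T))=0=P(e_n)\), then injectivity of \(\Lambda_n\) (the integer realization lemma) gives \(p(T)=e_n\). The set equalities in the statement are then just restatements of the equivalences.

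The main subtlety is only bookkeeping of signs: one must confirm that \(p(T)\) is order-independent, which the paper already notes in the definition of the product point. The identity \(bcu(bc)=\pm(bc)^2\) is then immediate, so I expect no real obstacle.
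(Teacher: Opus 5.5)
Your proof is correct and follows the paper's argument essentially step for step. For (i)\(\Rightarrow\)(ii) you write \(bcd=\pm(bc)^2\), which is the same computation as the paper's \(xyz=\pm z^2=\pm e_n\); for (ii)\(\Rightarrow\)(i) you cancel the third factor using that every basis square is \(\pm e_n\), which the paper phrases as \(z^{-1}=\pm z\); and (ii)\(\Leftrightarrow\)(iii) comes from \(P(e_n)=0\) together with injectivity of the centroid map.
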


\begin{proof}
Write \(T=\{x,y,z\}\).  If, after relabeling,
\(z=u(xy)\), then \(xy=\pm z\), and hence
\[
  xyz=\pm z^2=\pm e_n.
\]
Thus \(p(T)=e_n\).

Conversely, if \(p(T)=e_n\), then for any ordering of the three vertices
\[
  xyz=\pm e_n.
\]
Therefore
\[
  xy=\pm z^{-1}.
\]
Every basis word satisfies \(z^2=\pm e_n\), so \(z^{-1}=\pm z\).
Consequently \(xy=\pm z\), and therefore \(u(xy)=z\).  Thus \(T\) is
multiplication-generated.

Finally, \(C_T=P(p(T))\), while \(P(e_n)=0\).  Since the centroid map is
injective, \(C_T=0\) holds if and only if \(p(T)=e_n\).
\end{proof}

\begin{corollary}[Coordinatewise form of multiplication-generation]
\label{cor:mult-digit-patterns}
Let \(T=\{x,y,z\}\in E_n\).  Then \(T\) is multiplication-generated if and
only if, at every coordinate \(r\), the unordered digit triple
\[
  \{x_r,y_r,z_r\}
\]
has one of the forms
\[
  \{e,e,e\},
  \qquad
  \{e,a,a\}\quad(a\in\{i,j,k\}),
  \qquad
  \{i,j,k\}.
\]
\end{corollary}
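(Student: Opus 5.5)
The plan is to combine Proposition~\ref{prop:mult-product-point} with a coordinatewise computation of the unsigned product. By that proposition, \(T\) is multiplication-generated if and only if \(p(T)=e_n\). Multiplication in \(F_n\) is coordinatewise quaternion multiplication, with all local signs collected into one global sign. Hence the unsigned product \(u(xyz)\) has \(r\)-th digit equal to the unsigned quaternion product \(u(x_ry_rz_r)\). Therefore \(p(T)=e_n\) holds exactly when \(u(x_ry_rz_r)=e\) at every coordinate \(r\). This reduces the corollary to a one-digit statement.

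The one-digit statement is that, for \(a,b,c\in\{i,j,k,e\}\), the product \(abc\) is \(\pm e\) if and only if the multiset \(\{a,b,c\}\) is \(\{e,e,e\}\), \(\{e,a,a\}\) with \(a\) noncentral, or \(\{i,j,k\}\). Any two digits commute or anticommute, so the ordering of the three factors does not matter up to sign. I will prove this by cases on the number of \(e\)'s.

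\begin{itemize}
\item With three \(e\)'s, the product is \(e\).
\item With exactly two \(e\)'s, the product is the remaining noncentral digit, which is never \(\pm e\).
\item With exactly one \(e\), the product is \(\pm e\) if and only if the other two digits \(a,b\) satisfy \(ab=\pm e\). Among noncentral digits this happens exactly when \(a=b\), since \(a^2=-e\), while distinct noncentral digits multiply to \(\pm\) the third one.
\item With no \(e\), the product \(abc\) lies in \(\pm\{e,i,j,k\}\). Unsigned, it equals \(e\) exactly when \(u(ab)=c\). If \(a=b\), then \(u(ab)=e\ne c\), so this fails. If \(a\ne b\), then \(u(ab)\) is the third noncentral digit, so the condition forces \(\{a,b,c\}=\{i,j,k\}\). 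Conversely, for \(\{i,j,k\}\) the product is \(ijk=-e\), up to sign under reordering.
\end{itemize}

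Combining the two steps gives the stated characterization. Only the hypothesis \(T\in E_n\) is used, so the conclusion lies inside \(E_n\). No step is a serious obstacle. The only point needing care is justifying that the unsigned product is computed digitwise independently of the global sign, and this is immediate from the definition of multiplication in \(F_n\).
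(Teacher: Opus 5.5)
Your proposal is correct and follows the paper's proof: reduce via Proposition~\ref{prop:mult-product-point} to \(p(T)=e_n\), read this coordinatewise, and classify the digit triples whose product is \(\pm e\). The paper simply asserts that final classification, so your case analysis on the number of \(e\)'s supplies the routine details it leaves implicit.
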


\begin{proof}
By Proposition~\ref{prop:mult-product-point}, multiplication-generation is
equivalent to \(p(T)=e_n\), or equivalently to the product of the three
digits being central in every coordinate.  Among unordered triples from
\(\{i,j,k,e\}\), the displayed patterns are exactly those whose product is
\(\pm e\).
\end{proof}

Use the signed-centralizer notation of \cite{Dement2026}:
\[
\begin{aligned}
  C_{\mathrm{tiles}}(b)
    &=\{c\in\Delta_n:cb=bc\},\\
  C_+(b)
    &=\{c\in\Delta_n:cb=bc\text{ and }cb\in\Delta_n\},\\
  C_-(b)
    &=\{c\in\Delta_n:cb=bc\text{ and }cb\in-\Delta_n\},
\end{aligned}
\]
so that
\[
  C_{\mathrm{tiles}}(b)=C_+(b)\sqcup C_-(b).
\]
The signs in \(C_+\) and \(C_-\) refer to the sign of the common product,
not to commutation versus anticommutation.

\begin{corollary}[Commutation filter for multiplication-generated triangles]
\label{cor:mult-commutation-filter}
Suppose
\[
  T=\{b,c,d\}\in E_n,\qquad d=u(bc).
\]
Then
\[
  b^2=c^2=d^2=s e_n
  \qquad(s\in\{+1,-1\}),
\]
and
\[
  cb=s\,bc.
\]
Hence \(b\) and \(c\) commute when \(s=+1\) and anticommute when
\(s=-1\).  In the commuting case,
\[
  c\in C_{\mathrm{tiles}}(b)=C_+(b)\sqcup C_-(b).
\]
\end{corollary}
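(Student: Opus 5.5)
The statement has two parts, and each follows quickly from results already in hand. First, by the Same-orientation theorem (Theorem~\ref{thm:same-orientation}), the three vertices of \(T\) have equal basis squares. Equation~\eqref{eq:square-orientation} lets us write this common value as
\[
  b^2=c^2=d^2=s\,e_n,\qquad s\in\{+1,-1\}.
\]
This settles the first claim immediately.

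For the commutation relation, I would start from \(d=u(bc)\), so \(bc=\eta d\) for some sign \(\eta\in\{\pm1\}\). Squaring gives
\[
  (bc)^2=\eta^2d^2=d^2=s\,e_n.
\]
On the other hand, positive basis words either commute or anticommute, so \(cb=\delta\,bc\) for some \(\delta\in\{\pm1\}\). Expanding the square,
\[
  (bc)^2=b\,(cb)\,c=\delta\,b(bc)c=\delta\,b^2c^2=\delta\,s^2e_n=\delta\,e_n.
\]
Comparing the two expressions gives \(\delta=s\), that is, \(cb=s\,bc\). So \(b,c\) commute exactly when \(s=+1\) and anticommute when \(s=-1\).

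Finally, in the commuting case, \(c\in C_{\mathrm{tiles}}(b)\) holds by definition. The product \(cb\) is a signed basis word, so it lies in \(\Delta_n\) or in \(-\Delta_n\), and the partition \(C_{\mathrm{tiles}}(b)=C_+(b)\sqcup C_-(b)\) applies.

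The only step needing care is the sign bookkeeping in \((bc)^2\). Multiplication in \(F_n\) is associative, being a group, so the regrouping \(b(cb)c\) is legitimate. Central signs pull out freely. Both of these should be noted explicitly; no real obstacle is expected.
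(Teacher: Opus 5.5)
Your proposal is correct and follows the paper's proof essentially step for step. You obtain the common square \(se_n\) from the same-orientation theorem, write \(cb=\delta\,bc\), and compare \((bc)^2=b(cb)c=\delta\,b^2c^2=\delta e_n\) with \((bc)^2=d^2=se_n\) to conclude \(\delta=s\).
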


\begin{proof}
Theorem~\ref{thm:same-orientation} gives equality of the three squares.
By \cite[Lemma~6.7 and Corollary~6.8]{Dement2026}, positive basis words
commute or anticommute, so write
\[
  cb=\eta\,bc,\qquad\eta\in\{+1,-1\}.
\]
The sign in \(bc=\pm d\) disappears on squaring, and
\[
  d^2=(bc)^2=b(cb)c=\eta b^2c^2=\eta e_n.
\]
Comparing with \(d^2=s e_n\) gives \(\eta=s\), hence \(cb=s\,bc\).
\end{proof}

\begin{proposition}[Global \(\gamma\)-orbits are multiplication-generated]
\label{prop:global-mult}
For every \(b\in\Delta_n\setminus\{e_n\}\),
\[
  \{b,\gamma(b),\gamma^2(b)\}
\]
is a multiplication-generated local equilateral triangle, because
\begin{equation}\label{eq:gamma-product}
  b\,\gamma(b)=\gamma^2(b).
\end{equation}
The number of distinct triangles in this family is
\begin{equation}\label{eq:Gn}
  G_n=\frac{4^n-1}{3}.
\end{equation}
\end{proposition}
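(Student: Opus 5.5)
The plan is to verify \eqref{eq:gamma-product} coordinatewise, then read off membership and the count from results already established.

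First, I will check the signed identity \(b\,\gamma(b)=\gamma^2(b)\) directly from the coordinatewise definition of multiplication in \(F_n\). In each coordinate \(r\), the pair of digits \((b_r,\gamma(b_r))\) is one of \((i,j)\), \((j,k)\), \((k,i)\) or \((e,e)\). The quaternion products are \(ij=k\), \(jk=i\), \(ki=j\) and \(ee=e\). Each of these equals \(\gamma^2(b_r)\) with local sign \(+1\). Since the global sign is the product of the local signs, no \(-1\) is ever collected. This gives \eqref{eq:gamma-product} exactly, not merely up to sign.

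Next, I will identify the triple as a local triangle. Since \(b\ne e_n\), the set \(S=\supp(b)\) is nonempty, and \(\gamma_S\) agrees with the global \(\gamma\) on \(b\) because \(\gamma\) fixes \(e\). Hence \(\{b,\gamma(b),\gamma^2(b)\}=T(b,\supp(b))\). By \cite[Theorem~4.8]{Dement2026}, as recalled after the local-cycle definition, this is a nondegenerate equilateral centroid triangle in \(L_n\); this is also the content of the Remark on global cycles. Its three vertices are distinct by the freeness argument in Proposition~\ref{prop:C3-local-representatives}. Applying \(u\) to \eqref{eq:gamma-product} gives \(u(b\,\gamma(b))=\gamma^2(b)\), so the triangle has the form \(\{c,d,u(cd)\}\) with \(c=b\) and \(d=\gamma(b)\). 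It therefore lies in \(\mathcal M_n\). As a consistency check, the product of all three vertices is \(\gamma^2(b)^2=\pm e_n\), so \(p(T)=e_n\), in agreement with Proposition~\ref{prop:mult-product-point}.

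Finally, for the count \(G_n\), I will use that \(\gamma\) acts on \(\Delta_n\setminus\{e_n\}\) with every orbit of size \(3\). The word \(e_n\) is the unique \(\gamma\)-fixed word, and \(\gamma^3=\mathrm{id}\) rules out orbits of size \(2\). Each triangle in the family is exactly one such orbit, and distinct orbits give distinct vertex sets. Hence \(G_n=(4^n-1)/3\). This can equally be read off from Proposition~\ref{prop:C3-local-representatives}, applied to the \(C_3\)-invariant set of representatives \((b,\supp(b))\). No step here is a real obstacle; the only point requiring care is confirming that the sign in \eqref{eq:gamma-product} is \(+\), which the digit table settles.
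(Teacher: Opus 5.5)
Your proposal is correct and follows the paper's proof essentially step for step. Both verify \(ij=k\), \(jk=i\), \(ki=j\), \(ee=e\) coordinatewise with all local signs positive, identify the orbit as the local cycle \(T(b,\supp(b))\), and obtain \((4^n-1)/3\) from the fact that \(e_n\) is the unique \(\gamma\)-fixed word, so the remaining words fall into orbits of size \(3\). Your explicit application of \(u\) and the consistency check \(p(T)=e_n\) are correct additions that the paper leaves implicit.
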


\begin{proof}
Coordinatewise,
\[
  ij=k,\qquad jk=i,\qquad ki=j,\qquad ee=e,
\]
all with positive sign, proving \eqref{eq:gamma-product}.  The orbit is
local with selected set \(S=\supp(b)\).  The unique fixed word of the
global \(\gamma\)-action is \(e_n\); all other \(4^n-1\) words lie in
orbits of size \(3\).
\end{proof}

\begin{corollary}[Local multiplication rigidity]
\label{cor:local-mult-rigidity}
For \(T\in L_n\), the following are equivalent:
\[
  T\in\mathcal M_n
  \qquad\Longleftrightarrow\qquad
  T=\{b,\gamma(b),\gamma^2(b)\}
  \text{ for some }b\ne e_n.
\]
Hence
\[
  |\mathcal M_n\cap L_n|=\frac{4^n-1}{3}.
\]
\end{corollary}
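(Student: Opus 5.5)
The plan is to combine Proposition~\ref{prop:mult-product-point} with the unsigned product identity of Corollary~\ref{cor:local-product}. Let \(T=T(b,S)\in L_n\) with \(\varnothing\ne S\subseteq\supp(b)\). Corollary~\ref{cor:local-product} gives \(p(T)=c(b,S)\), the word obtained from \(b\) by replacing the digits in \(S\) with \(e\). By Proposition~\ref{prop:mult-product-point}, \(T\in\mathcal M_n\) if and only if \(p(T)=e_n\). So membership in \(\mathcal M_n\) is equivalent to \(c(b,S)=e_n\).

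Next, translate \(c(b,S)=e_n\) into a condition on \(S\). The word \(c(b,S)\) agrees with \(b\) outside \(S\) and is \(e\) on \(S\). Hence \(c(b,S)=e_n\) holds exactly when \(b_r=e\) for every \(r\notin S\), that is, when \(\supp(b)\subseteq S\). Together with the admissibility condition \(S\subseteq\supp(b)\), this forces \(S=\supp(b)\).

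With \(S=\supp(b)\), the operator \(\gamma_S\) applies \(\gamma\) on all noncentral coordinates and fixes the \(e\)-coordinates. Since \(\gamma(e)=e\), this is exactly the global map \(\gamma\). Therefore \(T=\{b,\gamma(b),\gamma^2(b)\}\), and \(b\ne e_n\) because \(S\) is nonempty. Conversely, every such global orbit lies in \(L_n\cap\mathcal M_n\) by Proposition~\ref{prop:global-mult}; alternatively, run the equivalences above backwards with \(S=\supp(b)\).

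The count \((4^n-1)/3\) then follows from the global-cycles remark, or from \eqref{eq:Gn}: \(\gamma\) acts freely on \(\Delta_n\setminus\{e_n\}\), so these words fall into \((4^n-1)/3\) orbits, each of size \(3\). Distinct orbits give distinct triangles, so this number is \(|\mathcal M_n\cap L_n|\).

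The only delicate point is the intrinsic nature of \(S\). The equivalence must not depend on which representative \((b,S)\) of \(T\) is chosen. This is supplied by Proposition~\ref{prop:local-recognition}, which identifies \(S\) as the set of varying coordinates of \(T\). It is also guaranteed by the fact that \(p(T)\) depends only on \(T\). Everything else in the argument is direct substitution.
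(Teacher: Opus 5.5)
Your proof is correct and follows essentially the same route as the paper. Corollary~\ref{cor:local-product} together with Proposition~\ref{prop:mult-product-point} reduces membership in \(\mathcal M_n\) to \(c(b,S)=e_n\); combined with \(S\subseteq\supp(b)\), this forces \(S=\supp(b)\), and the converse and the count come from Proposition~\ref{prop:global-mult} and \eqref{eq:Gn}.
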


\begin{proof}
Write \(T=T(b,S)\) with the canonical condition
\(\varnothing\ne S\subseteq\supp(b)\).  By
Corollary~\ref{cor:local-product},
\[
  p(T)=c(b,S),
\]
where \(c(b,S)\) is obtained from \(b\) by replacing the selected digits by
\(e\).  Proposition~\ref{prop:mult-product-point} shows that \(T\) is
multiplication-generated exactly when \(c(b,S)=e_n\).  This occurs exactly
when every coordinate outside \(S\) is already \(e\).  Together with
\(S\subseteq\supp(b)\), that is equivalent to
\[
  S=\supp(b).
\]
Thus \(T\) is a global \(\gamma\)-orbit.  The converse follows from
Proposition~\ref{prop:global-mult}, and the count follows from
\eqref{eq:Gn}.
\end{proof}

Writing \(\nu(b)=\#\supp(b)\), the global-\(\gamma\) family splits according
to \(b^2=(-1)^{\nu(b)}e_n\).  The even- and odd-\(\nu\) word counts are
\[
  \sum_{\nu\ {\rm even}}\binom n\nu3^\nu
    =\frac{4^n+(-2)^n}{2},
  \qquad
  \sum_{\nu\ {\rm odd}}\binom n\nu3^\nu
    =\frac{4^n-(-2)^n}{2}.
\]
Removing \(e_n\) from the even count and dividing by the orbit size \(3\)
gives
\[
  G_+(n)=\frac{4^n+(-2)^n-2}{6},
  \qquad
  G_{\mathrm{anti}}(n)=\frac{4^n-(-2)^n}{6}.
\]
There are no \(C_-\)-examples in the global-\(\gamma\) family, since
\eqref{eq:gamma-product} always has positive sign.

Exhaustive exact enumeration through \(n=6\), using
\eqref{eq:zplus}--\eqref{eq:kappa-lookup} together with the unsigned-product
condition, finds
\[
  |\mathcal M_n\cap X_n|=0
  \qquad(1\le n\le6).
\]
Thus every multiplication-generated example found through order \(6\) lies
in the global-\(\gamma\) family.

The corresponding finite-order data, including the signed commutation
classification, are collected in Table~\ref{tab:mult} after the
pairwise-anticommuting class is formally defined in
Section~\ref{sec:signed}.

Corollary~\ref{cor:mult-digit-patterns} gives a concrete starting point for
the remaining classification problem.  Any nonlocal example must either
contain a coordinate of type \(\{e,a,a\}\), or use only
\(\{e,e,e\}\) and \(\{i,j,k\}\) while failing the common cyclic-direction
condition of Proposition~\ref{prop:local-recognition}.

\begin{question}[Nonlocal multiplication rigidity]
\label{q:nonlocal-mult}
Can a multiplication-generated equilateral triangle be nonlocal?
Equivalently, does
\[
  T\in E_n,\qquad C_T=0
  \quad\Longrightarrow\quad
  T\in L_n
\]
hold for every \(n\)?  By Corollary~\ref{cor:local-mult-rigidity}, a positive
answer is equivalent to saying that every multiplication-generated
equilateral triangle is a nontrivial global \(\gamma\)-orbit.
\end{question}

\begin{remark}[OEIS A002450]
The sequence
\[
  0,1,5,21,85,341,\ldots
\]
with \(G_0=0\) is
\oeislink{https://oeis.org/A002450}{OEIS A002450} \cite{OEIS}.  Its
positive-index terms are the base-\(4\) repunits
\[
  G_n=11\cdots11_4.
\]
The same entry notes that these are generalized octagonal numbers, since
\[
  3G_n+1=4^n=(2^n)^2.
\]
For \(n\ge2\), however, \(G_n\equiv5\pmod8\), so only
\(G_1=1_8=i\) is itself a positive floretion word in the octal alphabet
\(\{1,2,4,7\}\).
\end{remark}

\section[Signed multiplication and pairwise-anticommuting equilateral triangles]{Signed multiplication and\texorpdfstring{\\}{ }pairwise-anticommuting equilateral triangles}\label{sec:signed}

The preceding sections use multiplication primarily through the unsigned map
$u(\pm b)=b$.  This is sufficient for product points and
multiplication-generation because it removes a global central sign.  Signed
commutation data already appears in Section~9 when the
multiplication-generated family is split into commuting and anticommuting
cases.  Here that sign information becomes a primary classifier on all of
$E_n$, rather than only a secondary subdivision of $\mathcal M_n$.

For positive basis words $b,c\in\Delta_n$, define the commutation sign
$\chi(b,c)\in\{\pm1\}$ by
\[
cb=\chi(b,c)\,bc.
\]
Thus $\chi(b,c)=+1$ for a commuting pair and $\chi(b,c)=-1$ for an
anticommuting pair.

\begin{definition}[Coordinate commutation mismatch]
For $b,c\in\Delta_n$, put
\[
m(b,c)=\#\{\,r:\ b_r,c_r\in\{i,j,k\},\ b_r\ne c_r\,\}.
\]
\end{definition}

\begin{lemma}[Commutation sign]
For all positive basis words $b,c\in\Delta_n$,
\[
bc=(-1)^{m(b,c)}cb,
\qquad\text{equivalently}\qquad
\chi(b,c)=(-1)^{m(b,c)}.
\]
\end{lemma}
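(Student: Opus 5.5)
The plan is to reduce the statement to single coordinates and then multiply the resulting signs. By the definition of multiplication in \(F_n\), the product \(bc\) is computed coordinatewise, \(bc=\bigl(\prod_r \epsilon_r\bigr)\,(b_1c_1)'\cdots(b_nc_n)'\), where \(b_rc_r=\epsilon_r(b_rc_r)'\) in the quaternion group and \((b_rc_r)'\in\{i,j,k,e\}\) is the unsigned digit. The same holds for \(cb\) with signs \(\epsilon'_r\) coming from \(c_rb_r\). The unsigned digits agree, since \(c_rb_r=\pm b_rc_r\) in \(\mathbb H\). It therefore suffices to show that \(c_rb_r=\eta_r\,b_rc_r\) with \(\eta_r=-1\) exactly when \(b_r,c_r\in\{i,j,k\}\) and \(b_r\ne c_r\), and \(\eta_r=+1\) otherwise. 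The global signs then satisfy \(cb=\bigl(\prod_r\eta_r\bigr)bc=(-1)^{m(b,c)}bc\). Since \((-1)^{m}=(-1)^{-m}\), this is equivalent to \(bc=(-1)^{m(b,c)}cb\), and hence to \(\chi(b,c)=(-1)^{m(b,c)}\).

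The single-digit check splits into three cases. If either digit is \(e\), it is the identity, so the two digits commute. If \(b_r=c_r=a\in\{i,j,k\}\), then trivially \(aa=aa\). If \(b_r\ne c_r\) are both noncentral, the Hamilton relations \(ij=k=-ji\), \(jk=i=-kj\), \(ki=j=-ik\) give anticommutation. These three cases exhaust the ordered digit pairs, and the anticommuting case is precisely the one counted by \(m(b,c)\).

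The only point needing care is that collecting local signs into one global sign is legitimate in both orders. This follows because \(\pm1\) is central in \(F_n\), and the tensor-product multiplication on \(\mathbb H^{\otimes n}\) acts factorwise, so \((\pm x_1)\otimes\cdots\otimes(\pm x_n)=(\prod\pm)\,x_1\otimes\cdots\otimes x_n\). Once this is granted, no step is a genuine obstacle. The statement also re-derives, with an explicit exponent, the commute-or-anticommute fact that was cited from \cite[Lemma~6.7 and Corollary~6.8]{Dement2026}.
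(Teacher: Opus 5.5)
Your proposal is correct and follows the same approach as the paper: check the three types of digit pair coordinatewise, then multiply the local signs into one global factor $(-1)^{m(b,c)}$. Your extra bookkeeping (that the unsigned digits of $bc$ and $cb$ agree, and that $\pm1$ is central so local signs can be collected) makes explicit what the paper's short proof leaves implicit.
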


\begin{proof}
At one coordinate, $e$ commutes with every digit and equal noncentral
digits commute.  Two distinct digits in $\{i,j,k\}$ anticommute.  The global
sign is therefore the product of one factor $-1$ for each coordinate counted
by $m(b,c)$.
\end{proof}

\begin{definition}[Pairwise-anticommuting class]
For $T=\{b_1,b_2,b_3\}\in E_n$, put
\[
\Theta(T)=b_1+b_2+b_3.
\]
Let
\[
\mathrm{AC}_n
=
\{\,T\in E_n:\ b_1b_2=-b_2b_1,\ b_1b_3=-b_3b_1,\ b_2b_3=-b_3b_2\,\}.
\]
We call these \emph{pairwise-anticommuting equilateral triangles}.  The next
proposition gives the equivalent scalar vertex-sum-square description.
\end{definition}

For comparison with the multiplication-generated family of Section~9,
Table~\ref{tab:mult} records the exact finite-order split after the present
definition has made the notation \(\mathrm{AC}_n\) available.

\begin{table}[H]
\centering
\caption{Exact enumeration of multiplication-generated \mbox{equilateral} triples
through order \(6\).  The fourth data column is the pairwise-anticommuting
subfamily.  The final column records the nonlocal subfamily found by the
enumeration.  Its vanishing in the displayed orders is computational
evidence for Question~\ref{q:nonlocal-mult}; Corollary~\ref{cor:local-mult-rigidity}
proves only the local multiplication-generated count
\(|\mathcal M_n\cap L_n|=G_n\).}
\label{tab:mult}
\begin{tabular}{crrrrr}
\toprule
\(n\)&\(|\mathcal M_n|\)&\(C_+\)&\(C_-\)&
\(|\mathcal M_n\cap\mathrm{AC}_n|\)&
\(|\mathcal M_n\cap X_n|\)\\
\midrule
1&1&0&0&1&0\\
2&5&3&0&2&0\\
3&21&9&0&12&0\\
4&85&45&0&40&0\\
5&341&165&0&176&0\\
6&1365&693&0&672&0\\
\bottomrule
\end{tabular}
\end{table}

\noindent\textit{OEIS cross-reference.}
Within the local multiplication-generated family,
Corollary~\ref{cor:local-mult-rigidity} identifies every triangle with a
nontrivial global \(\gamma\)-orbit
\(\{b,\gamma(b),\gamma^2(b)\}\).  Writing
\(\nu(b)=\#\supp(b)\), all three vertices have square
\((-1)^{\nu(b)}e_n\); hence the commutation filter of
Corollary~\ref{cor:mult-commutation-filter} shows that the orbit is
pairwise anticommuting exactly when \(\nu(b)\) is odd.  There are
\[
  \sum_{\nu\ {\rm odd}}\binom n\nu3^\nu
  =\frac{4^n-(-2)^n}{2}
\]
odd-support words, and every global \(\gamma\)-orbit has three words.
Therefore
\[
  |\mathcal M_n\cap L_n\cap\mathrm{AC}_n|
  =G_{\mathrm{anti}}(n)
  =\frac13\sum_{\nu\ {\rm odd}}\binom n\nu3^\nu
  =\frac{4^n-(-2)^n}{6}
  =2^{n-1}J_n,
\]
where \(J_n\) is the Jacobsthal sequence.  The values
\[
  0,1,2,12,40,176,672,\ldots
\]
form \oeislink{https://oeis.org/A003683}{OEIS A003683} \cite{OEIS}.
The computed column \(|\mathcal M_n\cap\mathrm{AC}_n|\) in
Table~\ref{tab:mult} agrees with this local formula through \(n=6\) because
the exact enumeration finds no nonlocal multiplication-generated triangle
in those orders.  No general equality
\(|\mathcal M_n\cap\mathrm{AC}_n|=G_{\mathrm{anti}}(n)\) is asserted here
without a resolution of Question~\ref{q:nonlocal-mult}.

\begin{proposition}[Scalar vertex-sum square]\label{prop:scalar-square}
Let $b_1,b_2,b_3$ be three distinct positive basis words and
$T=\{b_1,b_2,b_3\}$.  Then
\[
\Theta(T)^2\in\mathbb{R}e_n
\quad\Longleftrightarrow\quad
b_1b_2=-b_2b_1,\quad
b_1b_3=-b_3b_1,\quad
b_2b_3=-b_3b_2.
\]
If in addition $T\in E_n$, then the same-orientation theorem gives a common
sign
\[
\varepsilon(T)=(-1)^{\nu(b_1)}
              =(-1)^{\nu(b_2)}
              =(-1)^{\nu(b_3)},
\]
and hence
\[
T\in\mathrm{AC}_n
\quad\Longleftrightarrow\quad
\Theta(T)^2=3\varepsilon(T)e_n.
\]
\end{proposition}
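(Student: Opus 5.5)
The plan is to expand the square directly in the group algebra. For \(T=\{b_1,b_2,b_3\}\) we have
\[
  \Theta(T)^2=b_1^2+b_2^2+b_3^2+\sum_{r<s}(b_rb_s+b_sb_r).
\]
By \eqref{eq:square-orientation} each \(b_r^2=(-1)^{\nu(b_r)}e_n\) is already scalar. By the commutation-sign lemma, \(b_sb_r=\chi(b_r,b_s)b_rb_s\), so each anticommutator equals \((1+\chi(b_r,b_s))\,b_rb_s\). This is \(0\) for an anticommuting pair and \(2b_rb_s\) for a commuting pair. Hence
\[
  \Theta(T)^2=\Bigl(\sum_r(-1)^{\nu(b_r)}\Bigr)e_n+2\sum_{\substack{r<s\\ \chi(b_r,b_s)=+1}}b_rb_s .
\]
The backward implication is now immediate: if all three pairs anticommute, the second sum is empty and \(\Theta(T)^2\) is a real multiple of \(e_n\).

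For the forward implication, suppose some pair commutes. Each product \(b_rb_s\) equals \(\pm u(b_rb_s)\) with \(u(b_rb_s)\in\Delta_n\). Distinctness of \(b_r,b_s\) gives \(u(b_rb_s)\neq e_n\), because \(b_rb_s=\pm e_n\) forces \(b_s=\pm b_r^{-1}=\pm b_r\). So every surviving term is \(\pm2\) times a non-identity basis word. These could cancel only if two surviving terms had the same unsigned word with opposite signs.

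This step is the main obstacle, and I would handle it with one identity. Suppose \(u(b_1b_2)=u(b_1b_3)\). Multiplying on the left by \(b_1^{-1}=\pm b_1\) gives \(b_2=\pm b_3\), which contradicts distinctness. The same argument works for any two of the three pairs, since each pair of pairs shares an index, with a right multiplication where the shared index is on the right. So the unsigned words \(u(b_rb_s)\) are pairwise distinct and non-identity. Since the positive basis words are linearly independent in \(\mathbb H^{\otimes n}\), the non-scalar part of \(\Theta(T)^2\) is nonzero whenever at least one pair commutes. This proves the first equivalence.

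For the second statement, assume \(T\in E_n\). Theorem~\ref{thm:same-orientation} gives \(b_1^2=b_2^2=b_3^2=\varepsilon(T)e_n\), so the scalar part of \(\Theta(T)^2\) is exactly \(3\varepsilon(T)e_n\). If \(T\in\mathrm{AC}_n\), the displayed expansion therefore reduces to \(\Theta(T)^2=3\varepsilon(T)e_n\). Conversely, if \(\Theta(T)^2=3\varepsilon(T)e_n\), then in particular \(\Theta(T)^2\in\mathbb Re_n\), and the first part yields pairwise anticommutation.
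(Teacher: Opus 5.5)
Your proof is correct and follows essentially the same route as the paper. You expand $\Theta(T)^2$, note that the self-squares are scalar and each anticommutator is $0$ or $2b_rb_s$, rule out cancellation by showing the unsigned pair products are non-identity and pairwise distinct via cancelling a shared factor, and conclude by linear independence of the basis words. One small point to make explicit: for $u(b_1b_2)$ versus $u(b_2b_3)$ the shared factor sits on opposite sides, so first rewrite $u(b_2b_3)=u(b_3b_2)$ (valid because basis words commute or anticommute) before right-cancelling $b_2$.
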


\begin{proof}
Expand
\[
\Theta(T)^2
=b_1^2+b_2^2+b_3^2
 +(b_1b_2+b_2b_1)
 +(b_1b_3+b_3b_1)
 +(b_2b_3+b_3b_2).
\]
The three self-squares are scalar multiples of $e_n$ by (2.1).  Each
anticommutator is either zero or twice a signed basis word.

It remains to exclude cancellation among distinct nonzero anticommutators.
The unsigned pair-product words
$u(b_1b_2)$, $u(b_1b_3)$, and $u(b_2b_3)$ are pairwise distinct.
For example, if $u(b_1b_2)=u(b_1b_3)$, then
$b_1b_2=\pm b_1b_3$; left multiplication by $b_1^{-1}$ gives
$b_2=\pm b_3$, and positivity of the \mbox{basis words} forces $b_2=b_3$,
contrary to distinctness.  The other pairs are identical.
Moreover, $u(b_i b_j)\ne e_n$ for $i\ne j$: coordinatewise, the
unsigned product is $e$ exactly when the two corresponding digits agree, so
$u(b_i b_j)=e_n$ would force $b_i=b_j$.  Thus the nonzero anticommutators
are supported on distinct nonidentity basis words and are linearly
independent both from one another and from the scalar line $\mathbb{R}e_n$.
Hence the square is scalar exactly when all three anticommutators vanish.

For $T\in E_n$, Theorem~\ref{thm:same-orientation} and \eqref{eq:square-orientation} imply
$b_1^2=b_2^2=b_3^2=\varepsilon(T)e_n$, giving the final formula.
\end{proof}

\begin{corollary}[Anticommutation on the main axes]\label{cor:axis-ac}
Let
\[
  T=\{b_I,b_J,b_K\}\in E_n^{\mathrm{ax}},
\]
where \(b_I\), \(b_J\), and \(b_K\) lie on the \(I\)-, \(J\)-, and
\(K\)-axes, respectively.  Write their supports as \(A,B,C\), so that
\(b_I\) has digit \(i\) on \(A\) and \(e\) elsewhere, and cyclically for
\(b_J,b_K\).  Then
\[
  T\in\mathrm{AC}_n
  \quad\Longleftrightarrow\quad
  |A\cap B|,\ |B\cap C|,\ |C\cap A|
  \text{ are all odd}.
\]
If these equivalent conditions hold, then
\[
  \Theta(T)^2=3\varepsilon(T)e_n,
  \qquad
  \varepsilon(T)=(-1)^{|A|}
                =(-1)^{|B|}
                =(-1)^{|C|}.
\]
\end{corollary}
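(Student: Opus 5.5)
The corollary should follow from the commutation-sign lemma together with Proposition~\ref{prop:scalar-square}, once the mismatch counts $m(\cdot,\cdot)$ for axis words are computed.

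First, describe the vertices. By Lemma~\ref{lem:axis-scalars} and the discussion preceding it, the $I$-axis words are exactly $\{i,e\}^n$. The $J$- and $K$-axis words are obtained by applying $\gamma$, so they are exactly $\{j,e\}^n$ and $\{k,e\}^n$. Hence $b_I$ has digit $i$ on $A$ and $e$ elsewhere, $b_J$ has $j$ on $B$ and $e$ elsewhere, and $b_K$ has $k$ on $C$ and $e$ elsewhere.

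One point needs care: the origin $e_n$ lies on all three axes, so the labeling is not automatically canonical. If one vertex is $e_n$, its support is empty. Then two of the intersections are empty and have even size, so that vertex commutes with both others. This is consistent with the claimed equivalence: the triangle is not in $\mathrm{AC}_n$, and the parity condition fails. So the degenerate labeling ambiguity causes no trouble, and it suffices to record it in one sentence.

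Next, compute the mismatch counts. For $b_I$ and $b_J$, a coordinate contributes to $m(b_I,b_J)$ exactly when both digits are noncentral and distinct. That happens exactly when the coordinate lies in $A\cap B$, where the digits are $i$ and $j$. Hence $m(b_I,b_J)=|A\cap B|$, and likewise $m(b_J,b_K)=|B\cap C|$ and $m(b_K,b_I)=|C\cap A|$. The commutation-sign lemma gives $\chi=(-1)^{m}$, so each pair anticommutes iff the corresponding intersection is odd. By the definition of $\mathrm{AC}_n$ this is the stated equivalence.

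For the final formula, apply Proposition~\ref{prop:scalar-square}: for $T\in\mathrm{AC}_n$ it gives $\Theta(T)^2=3\varepsilon(T)e_n$ with $\varepsilon(T)=(-1)^{\nu(b)}$ for each vertex $b$. Here $\nu(b_I)=|A|$, $\nu(b_J)=|B|$ and $\nu(b_K)=|C|$. These three signs agree by Theorem~\ref{thm:same-orientation}.

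There is no real obstacle. The only step needing care is the identification of axis words with $\{f,e\}^n$, which is already established, and the remark about the origin vertex.
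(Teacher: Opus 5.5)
Your proposal is correct and follows the paper's proof essentially step for step. You identify \(m(b_I,b_J)=|A\cap B|\) and its cyclic analogues, apply the commutation-sign lemma to get the parity criterion, and then use the same-orientation theorem and Proposition~\ref{prop:scalar-square} for the scalar square. Your remark about an origin vertex (empty support, so both sides of the equivalence fail) is a harmless consistency check that the paper leaves implicit.
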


\begin{proof}
At a coordinate shared by the supports of \(b_I\) and \(b_J\), the two
digits are the distinct noncentral digits \(i\) and \(j\); outside
\(A\cap B\), that pair contributes no commutation mismatch.  Hence
\[
  m(b_I,b_J)=|A\cap B|.
\]
Cyclically,
\[
  m(b_J,b_K)=|B\cap C|,
  \qquad
  m(b_K,b_I)=|C\cap A|.
\]
The commutation-sign lemma gives the parity criterion.  Since
\(T\in E_n\), the same-orientation theorem gives the common square sign
shown above, and Proposition~\ref{prop:scalar-square} gives the scalar-square
formula.
\end{proof}

\subsection*{The local parity criterion}

\begin{theorem}[Local pairwise-anticommutation criterion]\label{thm:local-ac}
Let
\[
T(b,S)=\{b,\gamma_S(b),\gamma_S^2(b)\}
\]
be a local triangle, with
$\varnothing\ne S\subseteq\supp(b)$.  Then
\[
T(b,S)\in\mathrm{AC}_n
\quad\Longleftrightarrow\quad
|S|\ \text{is odd}.
\]
\end{theorem}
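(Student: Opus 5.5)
The plan is to compute the commutation signs of the three vertex pairs directly with the commutation-sign lemma, \(\chi(b,c)=(-1)^{m(b,c)}\).

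First compare \(b\) with \(\gamma_S(b)\). At a coordinate \(r\notin S\) the two words have the same digit. That digit is equal or central, so the coordinate contributes nothing to \(m\). At a coordinate \(r\in S\), the canonical condition \(S\subseteq\supp(b)\) gives \(b_r\in\{i,j,k\}\). Then \(\gamma(b_r)\) is a different noncentral digit, so every selected coordinate contributes exactly one mismatch. Hence
\[
  m\bigl(b,\gamma_S(b)\bigr)=|S|.
\]

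The same count applies to the other two pairs without further computation. The pair \(\gamma_S(b),\gamma_S^2(b)\) is the pair \(b',\gamma_S(b')\) with \(b'=\gamma_S(b)\). Since \(\supp(b')=\supp(b)\supseteq S\), the pair \((b',S)\) is again an admissible representative; this is the \(C_3\)-invariance in Proposition~\ref{prop:C3-local-representatives}. The pair \(\gamma_S^2(b),b\) is likewise the pair \(b'',\gamma_S(b'')\) with \(b''=\gamma_S^2(b)\), because \(\gamma_S^3(b)=b\). Applying the first computation to each representative shows that all three pairs have mismatch count exactly \(|S|\).

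It follows that every pair of vertices of \(T(b,S)\) has commutation sign \((-1)^{|S|}\). All three pairs anticommute exactly when \(|S|\) is odd, and otherwise all three commute. By the definition of \(\mathrm{AC}_n\), this is the asserted equivalence.

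As a remark to append, Proposition~\ref{prop:scalar-square} then gives \(\Theta(T)^2=3\varepsilon(T)e_n\) in the odd case, with \(\varepsilon(T)=(-1)^{\nu(b)}\).

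No step is a real obstacle. The only point needing care is that every selected digit is noncentral, so that \(\gamma\) genuinely moves it; this is exactly where the canonical requirement \(S\subseteq\supp(b)\) is used.
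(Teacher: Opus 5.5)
Your proof is correct and matches the paper's argument: both show that every pair of vertices has mismatch count $m=|S|$ (equal digits off $S$, distinct noncentral digits on $S$), and then conclude via the commutation-sign lemma. The only cosmetic difference is how you handle the second and third pairs. You re-base the admissible representative along its $C_3$-orbit, whereas the paper simply notes that $a,\gamma(a),\gamma^2(a)$ are pairwise distinct at each selected coordinate.
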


\begin{proof}
Outside $S$, every pair of vertices has equal digits.  At each selected
coordinate, every pair has two distinct noncentral digits.  Hence
\[
m\bigl(\gamma_S^r(b),\gamma_S^s(b)\bigr)=|S|
\qquad(r\ne s).
\]
The result follows from the commutation-sign lemma.
\end{proof}

\begin{corollary}[Number of local pairwise-anticommuting triangles]\label{cor:local-ac-count}
For $n\ge1$,
\[
\begin{aligned}
|\mathrm{AC}_n\cap L_n|
&=
\sum_{\substack{1\le k\le n\\ k\ {\rm odd}}}
\binom nk\,3^{k-1}4^{n-k}  \\
&=\frac{7^n-1}{6}.
\end{aligned}
\]
The first values are
\[
1,\ 8,\ 57,\ 400,\ 2801,\ 19608,\ 137257,\ 960800,\ldots.
\]
\end{corollary}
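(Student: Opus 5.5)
The count follows by combining Theorem~\ref{thm:local-ac} with the fixed-selected-set count of Proposition~\ref{prop:fixed-S-count}. By Proposition~\ref{prop:fixed-S-count}, the local classes \(L_n(S)\) are pairwise disjoint and \(|L_n(S)|=3^{|S|-1}4^{n-|S|}\). By Theorem~\ref{thm:local-ac}, whether a local triangle lies in \(\mathrm{AC}_n\) depends only on the parity of its intrinsic selected set \(S\). That set is well defined by Proposition~\ref{prop:local-recognition}, since it is exactly the set of varying coordinates. Hence \(\mathrm{AC}_n\cap L_n\) is the disjoint union of the classes \(L_n(S)\) with \(|S|\) odd. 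Grouping these classes by \(k=|S|\) gives the first displayed sum:
\[
  \sum_{k\ \mathrm{odd}}\binom nk3^{k-1}4^{n-k}.
\]
Equivalently, in the language of Proposition~\ref{prop:C3-local-representatives}, the set of representatives \((b,S)\) with \(|S|\) odd is \(C_3\)-invariant, because the action does not change \(S\). So it suffices to count these representatives and divide by \(3\).

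For the closed form, apply the standard parity filter to the binomial expansion:
\[
  \sum_{k\ \mathrm{odd}}\binom nk3^k4^{n-k}
  =\frac{(4+3)^n-(4-3)^n}{2}
  =\frac{7^n-1}{2}.
\]
Dividing by \(3\) to account for the factor \(3^{k-1}\) yields \((7^n-1)/6\).

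The listed initial values then follow from this formula: \(n=1\) gives \(1\), \(n=2\) gives \(48/6=8\), \(n=3\) gives \(342/6=57\), and so on.

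No step is a real obstacle. The only point to state explicitly is why dividing by \(3\) is legitimate. Either the disjointness of the \(L_n(S)\) in Proposition~\ref{prop:fixed-S-count} ensures that no triangle is counted under two selected sets, or, in the orbit formulation, one checks that the odd-\(|S|\) representative set is \(C_3\)-invariant, so that \eqref{eq:C3-invariant-subset} applies.
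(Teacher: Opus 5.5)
Your proposal is correct and follows essentially the same route as the paper: you restrict the fixed-selected-set count $3^{k-1}4^{n-k}$ of Proposition~\ref{prop:fixed-S-count} to odd $k$ via Theorem~\ref{thm:local-ac}, then apply the parity filter $\tfrac12\bigl((4+3)^n-(4-3)^n\bigr)$ and divide by $3$. Your extra remarks on the intrinsic selected set and on the $C_3$-invariant orbit formulation only make explicit the disjointness that the paper's short proof uses implicitly.
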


\begin{proof}
For a fixed selected set $S$ of size $k$, Proposition~\ref{prop:fixed-S-count} gives
$3^{k-1}4^{n-k}$ local triangles.  The theorem retains exactly the odd
values of $k$.  Therefore
\[
|\mathrm{AC}_n\cap L_n|
=
\frac13\cdot\frac{(4+3)^n-(4-3)^n}{2}
=
\frac{7^n-1}{6}.
\]
\end{proof}

\subsection*{Even selected sets and a secondary global orbit}

For a local representative $(b,S)$, let $d_S(b)\in\Delta_n$ be the word
obtained by retaining the digits of $b$ on $S$ and replacing every
unselected digit by $e$.

\begin{proposition}[Square image of a local cycle]\label{prop:local-square-image}
Let $T=T(b,S)$ and put $\varepsilon=(-1)^{\nu(b)}$.  Then
\[
\Theta(T)^2=
\begin{cases}
3\varepsilon e_n,
& |S|\text{ odd},\\[1mm]
\varepsilon\!\left[
3e_n+
2\bigl(d_S(b)+\gamma(d_S(b))+\gamma^2(d_S(b))\bigr)
\right],
& |S|\text{ even}.
\end{cases}
\]
In the even case, the non-scalar part is supported on a nontrivial global
$\gamma$-orbit, hence on the vertices of another equilateral triangle.
\end{proposition}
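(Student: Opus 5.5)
The plan is to expand \(\Theta(T)^2\) exactly as in the proof of Proposition~\ref{prop:scalar-square}, and then to evaluate each pairwise product coordinatewise. Write \(b_0=b\), \(b_1=\gamma_S(b)\), \(b_2=\gamma_S^2(b)\). Then
\[
  \Theta(T)^2=b_0^2+b_1^2+b_2^2+(b_0b_1+b_1b_0)+(b_1b_2+b_2b_1)+(b_2b_0+b_0b_2).
\]

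The first step handles the self-squares. Since \(S\subseteq\supp(b)\) and \(\gamma\) permutes \(\{i,j,k\}\), all three vertices have the same support, so \(\nu(b_t)=\nu(b)\). By \eqref{eq:square-orientation}, each self-square therefore equals \(\varepsilon e_n\), and together they contribute \(3\varepsilon e_n\). This agrees with the same-orientation theorem.

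The second step treats the case of odd \(|S|\). Here Theorem~\ref{thm:local-ac} gives pairwise anticommutation, so every anticommutator vanishes and \(\Theta(T)^2=3\varepsilon e_n\).

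The third step treats the case of even \(|S|\). The proof of Theorem~\ref{thm:local-ac} shows that \(m(b_r,b_s)=|S|\) is even for \(r\ne s\). By the commutation-sign lemma the vertices then pairwise commute, so each anticommutator equals \(2b_rb_s\) in either order. I would compute \(b_0b_1\), \(b_1b_2\) and \(b_2b_0\) coordinatewise, collecting the local signs into one global sign.

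At an unselected coordinate the two digits are equal. The product is \(e\), with sign \(-1\) when the digit is noncentral and \(+1\) when it is \(e\). The total sign from these coordinates is therefore \((-1)^{\nu(b)-|S|}\), which equals \(\varepsilon\) because \(|S|\) is even.

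At a selected coordinate with \(b\)-digit \(a\), the relations \(ij=k\), \(jk=i\), \(ki=j\) (all with positive sign) give
\[
  a\,\gamma(a)=\gamma^2(a),\qquad \gamma(a)\gamma^2(a)=a,\qquad \gamma^2(a)\,a=\gamma(a).
\]
Hence
\[
  b_0b_1=\varepsilon\,\gamma^2(d_S(b)),\qquad b_1b_2=\varepsilon\,d_S(b),\qquad b_2b_0=\varepsilon\,\gamma(d_S(b)).
\]
In each of these, the unselected coordinates become \(e\) and the selected coordinates carry the indicated \(\gamma\)-power of the digit of \(b\). Summing the three terms gives the stated even-case formula.

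The final step identifies the non-scalar part. Since \(S\neq\varnothing\) and the digits of \(b\) on \(S\) are noncentral, we have \(d_S(b)\ne e_n\). Proposition~\ref{prop:global-mult} then shows that \(\{d_S(b),\gamma(d_S(b)),\gamma^2(d_S(b))\}\) is a nontrivial global \(\gamma\)-orbit, and hence a (local, multiplication-generated) equilateral centroid triangle.

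The main obstacle is the sign bookkeeping in the third step. One must check that the cyclic products on \(S\) carry no sign, which uses the positivity of \(ij=k\), \(jk=i\), \(ki=j\), and that the unselected sign \((-1)^{\nu(b)-|S|}\) coincides with \(\varepsilon\) precisely because \(|S|\) is even. One must also use commutation to replace each anticommutator by twice a single product, independent of the order of the factors.
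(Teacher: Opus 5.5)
Your proposal is correct and follows the paper's proof essentially step for step. It uses the same expansion of $\Theta(T)^2$, the odd case via Theorem~\ref{thm:local-ac}, and in the even case pairwise commutation from $m=|S|$ followed by the coordinatewise products $b_0b_1=\varepsilon\,\gamma^2(d_S(b))$, $b_1b_2=\varepsilon\,d_S(b)$, $b_2b_0=\varepsilon\,\gamma(d_S(b))$; your explicit count $(-1)^{\nu(b)-|S|}=\varepsilon$ only makes precise a sign step the paper leaves implicit, and you were right not to claim the secondary triangle differs from $T$, since it coincides with $T$ when $S=\supp(b)$.
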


\begin{proof}
Write
\[
x=b,\qquad y=\gamma_S(b),\qquad z=\gamma_S^2(b).
\]
If $|S|$ is odd, Theorem~\ref{thm:local-ac} shows that $x,y,z$ pairwise
anticommute.  Hence all three anticommutators vanish in
\[
(x+y+z)^2
=x^2+y^2+z^2+(xy+yx)+(yz+zy)+(zx+xz).
\]
Since $x^2=y^2=z^2=\varepsilon e_n$, this gives
\(\Theta(T)^2=3\varepsilon e_n\).

Now suppose $|S|$ is even.  Any two vertices agree outside $S$, while at
each coordinate in $S$ their digits are distinct members of
$\{i,j,k\}$.  Thus the commutation mismatch of each pair is exactly
\(|S|\).  By the commutation-sign lemma, all three pairs therefore commute.
At an unselected noncentral coordinate the square of the common digit
contributes $-e$, while at a selected coordinate
$a\gamma(a)=\gamma^2(a)$, cyclically.  Since every selected coordinate is
noncentral,
\[
xy=\varepsilon\,\gamma^2(d_S(b)),\qquad
yz=\varepsilon\,d_S(b),\qquad
zx=\varepsilon\,\gamma(d_S(b)).
\]
Together with $x^2=y^2=z^2=\varepsilon e_n$, expansion of
$(x+y+z)^2$ gives the formula.  Because $S\ne\varnothing$, the global
$\gamma$-orbit of $d_S(b)$ is nontrivial.
\end{proof}

\medskip
\noindent\textit{Geometric interpretation.}
The even case is more rigid than a mere failure of scalarity: the entire
non-scalar support of $\Theta(T)^2$ is the global $\gamma$-orbit of
$d_S(b)$, hence is itself the vertex set of an equilateral triangle.  Thus
the local parity dichotomy sends odd $|S|$ to a scalar square, while even
$|S|$ produces a scalar term together with a uniformly weighted secondary
equilateral support.
\medskip

\begin{example}[The contrast between B.2 and B.3 from Appendix B]
For the local but non-product-centered triangle
\[
T=\{\sel{i}ij,\sel{j}ij,\sel{k}ij\}=T(iij,\{1\}),
\]
the selected-set size is odd, so
\[
(\sel{i}ij+\sel{j}ij+\sel{k}ij)^2=-3eee.
\]
By contrast,
\[
T'=\{j\sel{i}\sel{i},j\sel{j}\sel{j},j\sel{k}\sel{k}\}
   =T(jii,\{2,3\})
\]
is product-centered but has an even selected set.  Here
$d_{\{2,3\}}(jii)=eii$ and
\[
(j\sel{i}\sel{i}+j\sel{j}\sel{j}+j\sel{k}\sel{k})^2
=-3eee-2(eii+ejj+ekk).
\]
Thus product-centeredness and pairwise anticommutation are distinct
predicates.
\end{example}

\subsection*{Nonlocal examples and exact finite-order enumeration}

The local parity theorem does not exhaust the pairwise-anticommuting class.
The first nonlocal examples occur already at order $3$.

\begin{example}[A nonlocal pairwise-anticommuting triangle]
Let
\[
T=\{iji,\ jij,\ eek\}.
\]
Its scaled centroid coordinates are
\[
\Lambda_3(iji)=(5,2),\qquad
\Lambda_3(jij)=(2,5),\qquad
\Lambda_3(eek)=(-1,-1),
\]
which form an equilateral triangle.  The three commutation parities are
\[
m(iji,jij)=3,\qquad
m(iji,eek)=1,\qquad
m(jij,eek)=1,
\]
so $T\in\mathrm{AC}_3$.  It is nonlocal because the first coordinate has
digit set $\{i,j,e\}$, which is neither constant nor $\{i,j,k\}$ and hence
fails Proposition~\ref{prop:local-recognition}.
\end{example}

Exhaustive exact enumeration using the completion formulas
\eqref{eq:zplus}--\eqref{eq:kappa-lookup} and the commutation-parity test gives the following split.
No closed formula for the nonlocal column is asserted here.

\begin{table}[h]
\centering
\caption{Pairwise-anticommuting equilateral triangles for small orders.}
\begin{tabular}{r r r r}
\toprule
$n$ & $|\mathrm{AC}_n|$ & $|\mathrm{AC}_n\cap L_n|$
& $|\mathrm{AC}_n\cap X_n|$\\
\midrule
1 & 1       & 1       & 0\\
2 & 8       & 8       & 0\\
3 & 96      & 57      & 39\\
4 & 928     & 400     & 528\\
5 & 13\,472 & 2\,801  & 10\,671\\
6 & 183\,936& 19\,608 & 164\,328\\
\bottomrule
\end{tabular}
\end{table}

The rapid emergence of the nonlocal column shows that the signed
commutation structure cuts across locality rather than merely refining it.

\begin{remark}[Order-two recurrence contrast]\label{rem:ac-order2}
The exact local count
\[
  |\mathrm{AC}_n\cap L_n|=\frac{7^n-1}{6}
\]
satisfies the homogeneous second-order recurrence
\(a_n=8a_{n-1}-7a_{n-2}\) and therefore fits the order-two
recurrence-generator framework of Appendix~A.  By contrast, neither of the
computed sequences
\[
  |\mathrm{AC}_n|
  =1,8,96,928,13472,183936
\]
nor
\[
  |\mathrm{AC}_n\cap X_n|
  =0,0,39,528,10671,164328
\]
satisfies a homogeneous second-order linear recurrence with constant
coefficients across the displayed orders.  Thus these total and nonlocal
counts do not fit the present order-two recurrence scheme.  This observation
does not preclude other floretion constructions for the same sequences.
\end{remark}

\begin{remark}[Unsigned product data versus commutation-sign data]
Product points and multiplication-generation are largely governed by
unsigned products such as $u(b_1b_2b_3)$.  Pairwise anticommutation instead
depends on the central signs discarded by $u$.  Thus the two viewpoints
extract complementary information from the same multiplication law:
unsigned product words locate algebraic products geometrically, whereas
commutation signs record how the signed central extension behaves on pairs.
\end{remark}

\section*{Directions for further work}

The classes studied here are better viewed as overlapping structural
predicates on \(E_n\) than as a single nested hierarchy.  Locality,
product-centeredness, reflection symmetry, multiplication-generation, and
pairwise anticommutation define distinct intersecting families.  The new
class provides a concrete example: inside \(L_n\) it is governed by the simple
parity \(|S|\bmod 2\) and has the closed count \((7^n-1)/6\), yet nonlocal
pairwise-anticommuting triangles already occur in order \(3\).  Determining
the resulting intersection structure---which predicate combinations are
empty, which occur, and which admit closed enumerations---is a natural
extension of the present classification program.

Theorem~\ref{thm:noe-rigidity} resolves the no-\(e\) rigidity question:
although local triangles form an exponentially small fraction of \(E_n\),
locality is exhaustive on the distinguished Sierpiński-type support \(S_n\).
The mod-\(2\) digit-forcing mechanism suggests asking whether other
digit-restricted supports admit comparable rigidity theorems.

Several concrete classification problems remain.  First,
Corollary~\ref{cor:local-mult-rigidity} settles multiplication-generation
inside \(L_n\): the local examples are exactly the nontrivial global
\(\gamma\)-orbits.  Question~\ref{q:nonlocal-mult} asks whether
\[
  \mathcal M_n\cap X_n=\varnothing
\]
for every \(n\).  By Corollary~\ref{cor:mult-digit-patterns}, a counterexample
would have to exploit a coordinate of type \(\{e,a,a\}\) or an inconsistent
cyclic direction among \(\{i,j,k\}\)-coordinates.  A proof or counterexample
would also clarify whether the negative commuting component \(C_-\) can
occur.

Second, product-centeredness gives a decomposition transverse to locality.
The local class is governed exactly by Theorem~\ref{thm:pc-local-count},
while exhaustive enumeration gives
\[
  |P_n\cap X_n|=0,0,15,225,2280,20295,\ldots
\]
through order \(6\).  A structural description or closed count for the
nonlocal product-centered class remains open.  As noted in
Remark~\ref{rem:reflection-pc-enumeration}, the reflection-restricted problem
remains open as well: no closed formula is presently known for
\[
  |A_n\cap P_n|
  \qquad\text{or}\qquad
  |A_n\cap X_n\cap P_n|.
\]
The equality of the local and nonlocal \emph{reflection-restricted}
product-centered counts at \(n=6\),
\[
  |A_6\cap L_6\cap P_6|
   =|A_6\cap X_6\cap P_6|
   =813,
\]
may be worth investigating.  The ordinary Fibonacci count on \(S_n\) and
the even-index Fibonacci count under reflection symmetry suggest that
further restrictions may admit similarly simple parity descriptions.

Third, the new pairwise-anticommuting class raises its own nonlocal and
intersection questions.  The local count is exact, but no closed formula is
presently asserted for \(|\mathrm{AC}_n|\) or
\(|\mathrm{AC}_n\cap X_n|\).  It is also natural to study intersections such
as
\[
  \mathrm{AC}_n\cap P_n,\qquad
  \mathrm{AC}_n\cap A_n,\qquad
  \mathrm{AC}_n\cap\mathcal M_n,
\]
and to ask whether signed commutation data and unsigned product-point data
admit a useful joint finite-state description.

The \(C_3\)-orbit description of
Proposition~\ref{prop:C3-local-representatives} suggests another organizing
principle: one may ask whether further restricted counts can be understood
through the existing digit actions, their orbits, and their stabilizers.  No
general orbit classification is asserted here.

A different incidence question is suggested by a radial pattern in the
computations.  For \(b\in\Delta_n\), define
\[
  d_n(b)=|\{\,T\in E_n:b\in T\,\}|.
\]
Exact enumeration through \(n=6\) finds that
\[
  q(\Lambda_n(b))=q(\Lambda_n(c))
  \quad\Longrightarrow\quad
  d_n(b)=d_n(c).
\]
Thus the total equilateral-triangle incidence appears to depend only on the
Euclidean distance of the vertex from \(e_n\), even on shells that meet the
anisotropic outer part of the finite triangular region.  This is not merely
inherited from the most immediate subclasses: shell-homogeneity already
fails for \(\mathrm{AC}_n\) in order \(3\), and for \(L_n\) and \(X_n\) in
order \(4\).  Whether
\[
  d_n(b)=\Phi_n\!\left(q(\Lambda_n(b))\right)
\]
holds for every \(n\), and whether \(\Phi_n\) admits an explicit or recursive
description, are left as separate radial-incidence questions.

Appendix~A suggests a complementary search strategy.  In the translated
Jacobsthal family, the member \(r=3\) was initially missing from the
geometric list.  Reading the recurrence family in reverse suggested a class
with exactly two allowed unselected digits; the reflection-fixed alphabet
\(\{e,f_\tau\}\) then produced the fixed-axis product-point class of
Proposition~\ref{prop:Y-axis} and the count \((5^n-2^n)/3\).  Recurrence
sequences generated by the order-two floretion family but not yet represented
among the counts above may likewise point to further natural restrictions on
equilateral centroid triangles.  The pairwise-anticommuting local count in
Appendix~\ref{app:ac-recurrence} supplies a new instance in the opposite
direction: a geometric class first discovered experimentally yields a simple
recurrence and then a natural order-two power realization.

\appendix
\section{Order-two recurrence generators for the enumerative sequences}

Appendix~B of \cite{Dement2026} records an order-two floretion family whose
successive powers generate second-order linear recurrences under coefficient
extraction.  Several sequences arising independently from the geometric
enumeration in Sections~5--10 fit naturally into the same construction.  The
comparison can also be used in reverse: the missing \(r=3\) translate below
led to the geometric class counted in Proposition~\ref{prop:Y-axis}.  None of the
algebraic realizations in this appendix is needed for Sections~1--10.

For an order-two floretion \(Y\) and a basis word \(u\in\Delta_2\), write
\([u]Y\) for the coefficient of \(u\) in \(Y\).  Put
\begin{equation}\label{eq:Eprime}
  E'=\frac14(ie+ei+ii+jj+kk+jk+kj+ee),
\end{equation}
and, for scalars \(A,B,C\), define
\begin{equation}\label{eq:ZABC}
  Z(A,B,C)=E'(Aei+Bej+Cek).
\end{equation}
As shown in the recurrence-generator construction of
\cite[Appendix~B]{Dement2026}, direct multiplication gives
\begin{equation}\label{eq:Zcubic}
  Z^3+AZ^2+BC\,Z=0.
\end{equation}
Thus, with \(p=-A\) and \(q=-BC\), every fixed coefficient of \(Z^m\)
satisfies the second-order recurrence
\[
  a_m=p\,a_{m-1}+q\,a_{m-2}.
\]

\begin{proposition}[Matching the initial conditions]
Let \(a_0,a_1\) be prescribed and
\[
  a_m=p\,a_{m-1}+q\,a_{m-2}
  \qquad(m\ge2).
\]
Choose \(B,C\) with \(BC=-q\) and \(B\ne C\), put
\(Z=Z(-p,B,C)\), and define
\begin{equation}\label{eq:ell}
  \ell(Y)
   =a_0\bigl([ee]Y+[kk]Y\bigr)
    +\frac{4a_1-2pa_0}{B-C}[ij]Y.
\end{equation}
Then
\[
  \ell(Z^m)=a_m
  \qquad(m\ge0),
\]
where \(Z^0=e_2\).
\end{proposition}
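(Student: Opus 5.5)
The plan is to treat \(\ell\) as a linear functional and reduce everything to the recurrence satisfied by the coefficients of the powers \(Z^m\). By \eqref{eq:Zcubic} with \(A=-p\) and \(BC=-q\),
\[
  Z^{m}=p\,Z^{m-1}+q\,Z^{m-2}\qquad(m\ge3).
\]
Applying the linear functional \(\ell\) gives \(\ell(Z^m)=p\,\ell(Z^{m-1})+q\,\ell(Z^{m-2})\) for \(m\ge3\). So it suffices to verify the three initial values
\[
  \ell(Z^0)=a_0,\qquad \ell(Z^1)=a_1,\qquad \ell(Z^2)=a_2=pa_1+qa_0.
\]
The case \(m=2\) has to be checked directly, since the cubic relation only propagates the recurrence from \(m=3\) onward. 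The relation cannot simply be extended down to \(m=2\): \(Z^0=e_2\) need not satisfy \(Z^2=pZ+qe_2\).

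The case \(m=0\) is immediate: \([ee]e_2=1\), \([kk]e_2=0\) and \([ij]e_2=0\), so \(\ell(e_2)=a_0\).

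For \(m=1\) I would expand \(Z=E'(-p\,ei+B\,ej+C\,ek)\) by coordinatewise quaternion multiplication, multiplying each of the eight terms of \(E'\) by \(ei\), \(ej\), \(ek\). I would record only the coefficients \([ee]Z\), \([kk]Z\) and \([ij]Z\).
\begin{itemize}
  \item Only \(ie\cdot ei=ii\), \(ii\cdot ei=i(-e)\) and similar terms land on \(ee\) or \(kk\). I expect \([ee]Z+[kk]Z\) to come out as a multiple of \(p\) (plausibly \(-p/2\)).
  \item I expect \([ij]Z\) to be proportional to \(B-C\) (plausibly \((B-C)/4\)).
\end{itemize}
The normalizing coefficient \((4a_1-2pa_0)/(B-C)\) in \eqref{eq:ell} is built precisely so that these values give \(\ell(Z)=a_0\cdot(p/2)+\bigl((4a_1-2pa_0)/(B-C)\bigr)\cdot(B-C)/4=a_1\). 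This indicates the exact constants to aim for, with signs to be confirmed.

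The main obstacle is \(m=2\), which requires the same three coefficients of \(Z^2\). Here I would use the structure from \cite[Appendix~B]{Dement2026}: \(E'\) is an idempotent-like element commuting appropriately with the \(e\ast\) terms, so \(Z^2=E'(-p\,ei+B\,ej+C\,ek)^2\) up to the same projection. Since \((ei)^2=(ej)^2=(ek)^2=-ee\) and the cross terms anticommute, \((-p\,ei+Bej+Cek)^2=-(p^2+B^2+C^2)ee\). The coefficients of \(Z^2\) then come from \(E'\) itself, combined with what the cubic forces.

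If this shortcut fails, because \(E'\) does not commute with \(ei,ej,ek\), I would instead compute \(Z^2=Z\cdot Z\) directly, again tracking only the three needed coefficients. I would then check the identity
\[
  a_0\bigl([ee]Z^2+[kk]Z^2\bigr)+\frac{4a_1-2pa_0}{B-C}[ij]Z^2=pa_1+qa_0
\]
as a polynomial identity in \(a_0,a_1,p,B,C\) with \(q=-BC\). This amounts to checking separately that the \(a_1\)-part and the \(a_0\)-part match. Given the cited cubic, these are finite bookkeeping computations. The only real risk is sign conventions, which I would cross-check by a numerical instance such as \(p=1\), \(B=1\), \(C=-1\).
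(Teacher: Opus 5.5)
Your plan is the paper's proof. Verify \(\ell(Z^0)=a_0\), \(\ell(Z)=a_1\) and \(\ell(Z^2)=pa_1+qa_0\) by direct multiplication, then use \eqref{eq:Zcubic} in the form \(Z^m=pZ^{m-1}+qZ^{m-2}\) (\(m\ge3\)) together with linearity of \(\ell\). You are right that \(m=2\) needs its own check. The element \(Z^2-pZ-qe_2\) is nonzero: its \([ee]\)- and \([kk]\)-coefficients are \(-q/2\) and \(q/2\), and its \([ij]\)-coefficient is \(0\). It is nevertheless annihilated by \(\ell\). The paper records the values
\([ee]Z=[kk]Z=p/4\), \([ij]Z=(B-C)/4\), \([ee]Z^2=[kk]Z^2=(p^2+2q)/4\) and \([ij]Z^2=p(B-C)/4\).

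Two details of your sketch need correcting before you carry it out.

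\textbf{The sign at \(m=1\).} The correct value is \([ee]Z+[kk]Z=+p/2\). It comes from \(ei\cdot ei=-ee\) and \(kj\cdot ei=-kk\), weighted by the coefficient \(-p\) of \(ei\). The products you list, \(ie\cdot ei=ii\) and \(ii\cdot ei=-ie\), land on neither \(ee\) nor \(kk\). With \(-p/2\) the \(m=1\) check would fail.

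\textbf{The shortcut for \(Z^2\).} This step does not work. \(E'\) is not idempotent, since \(E'^2=\tfrac14(ee+ii+jj+kk)\). It also does not commute with \(ej\) or \(ek\): for instance \(ii\cdot ej=ik\) but \(ej\cdot ii=-ik\). Followed through, the shortcut gives \(Z^2=-(p^2+B^2+C^2)E'\). That yields \([ee]Z^2=-(p^2+B^2+C^2)/4\) instead of \((p^2-2BC)/4\), and these never agree when \(B\ne C\). It also yields \([ij]Z^2=0\) instead of \(p(B-C)/4\). Nor can the cubic supply \(Z^2\) from \(Z\).

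So the \(m=2\) step must be done by your fallback: compute the three needed coefficients of \(Z\cdot Z\) directly. This produces the values above and gives \(\ell(Z^2)=a_0\frac{p^2+2q}{2}+pa_1-\frac{p^2a_0}{2}=pa_1+qa_0\).
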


\begin{proof}
Direct multiplication from \eqref{eq:Eprime}--\eqref{eq:ZABC} gives
\[
  [ee]Z=[kk]Z=\frac p4,\qquad
  [ij]Z=\frac{B-C}{4},
\]
and
\[
  [ee]Z^2=[kk]Z^2=\frac{p^2+2q}{4},
  \qquad
  [ij]Z^2=\frac{p(B-C)}4.
\]
Hence \(\ell(Z^0)=a_0\), \(\ell(Z)=a_1\), and
\(\ell(Z^2)=pa_1+qa_0\).  Equation \eqref{eq:Zcubic} propagates the same
recurrence for all higher powers.
\end{proof}

Thus the common factor \(E'\) is not tied to one named sequence: it realizes
arbitrary second-order recurrence coefficients and, through a suitable
coefficient functional, arbitrary initial conditions.

\subsection{Fibonacci, Jacobsthal, and the sequence \texorpdfstring{\(H_n\)}{Hn}}

The Fibonacci generator is already given in
\cite[Appendix~B]{Dement2026}.  With
\[
  Z_F=Z(-1,1,-1)=E'(-ei+ej-ek),
\]
one has
\begin{equation}\label{eq:ZF}
  2[ij]Z_F^m=F_m
  \qquad(m\ge0),
\end{equation}
where \(F_0=0,F_1=1\).  Consequently
\[
  2[ij](Z_F^2)^m=F_{2m},
\]
so the even-index Fibonacci sequence in Section~\ref{sec:reflection} is generated by the
square of the same order-two floretion.

For the Jacobsthal numbers \(J_0=0,J_1=1\),
\[
  J_m=J_{m-1}+2J_{m-2},
\]
take
\[
  Z_J=Z(-1,1,-2)=E'(-ei+ej-2ek).
\]
Then
\begin{equation}\label{eq:ZJ}
  -4[ii]Z_J^m=J_m
  \qquad(m\ge0).
\end{equation}

The recurrence of Theorem~\ref{thm:pc-local-count},
\[
  H_0=1,\qquad H_1=7,\qquad
  H_m=5H_{m-1}+5H_{m-2},
\]
is obtained from
\[
  Z_H=Z(-5,1,-5)=E'(-5ei+ej-5ek)
\]
by
\begin{equation}\label{eq:ZH}
  H_m=[ee]Z_H^m+[kk]Z_H^m+3[ij]Z_H^m.
\end{equation}
Thus the same order-two family that contains the Fibonacci example from the
preceding paper also contains the recurrence governing all
product-centered local triangles here.

\subsection{Translation by the identity and binomial transforms}

The order-two identity word \(e_2=ee\) commutes with every order-two
floretion.  Therefore,
for any scalar \(r\), linear coefficient functional \(\ell\), and \(m\ge0\),
\begin{equation}\label{eq:binomial}
  \ell\bigl((X+r e_2)^m\bigr)
   =\sum_{k=0}^m\binom mk r^{m-k}\ell(X^k).
\end{equation}
For a nonnegative integer \(r\), the right side is the \(r\)-fold iterate
of the ordinary binomial transform.

From \eqref{eq:ZJ} and
\[
  J_m=\frac{2^m-(-1)^m}{3},
\]
we obtain
\begin{equation}\label{eq:ZJtranslate}
  -4[ii](Z_J+r e_2)^m
   =\frac{(r+2)^m-(r-1)^m}{3}.
\end{equation}

For a subset \(D\subseteq\{i,j,k,e\}\), let \(L_n(D)\) be the class of
local triangles \(T(b,S)\) for which every common unselected digit
\(b_r\), \(r\notin S\), belongs to \(D\).

\begin{proposition}[Allowed unselected alphabets]
If \(|D|=t\), then
\begin{equation}\label{eq:LnD}
  |L_n(D)|=\frac{(t+3)^n-t^n}{3}.
\end{equation}
Consequently, with \(r=t+1\),
\begin{equation}\label{eq:LnD-ZJ}
  |L_n(D)|=-4[ii](Z_J+r e_2)^n.
\end{equation}
\end{proposition}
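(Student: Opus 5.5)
The plan is to count \(L_n(D)\) through the \(C_3\)-orbit description of Proposition~\ref{prop:C3-local-representatives}, exactly as in Proposition~\ref{prop:Y-axis}, which is the special case \(D=\{e,f_\tau\}\).

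First, consider the set
\[
  \widetilde L_n(D)=\{(b,S)\in\widetilde L_n:\ b_r\in D\text{ for every }r\notin S\}.
\]
This set is \(C_3\)-invariant, because \(\tau\cdot(b,S)=(\gamma_S(b),S)\) changes only selected coordinates. Hence \(\pi(\widetilde L_n(D))=L_n(D)\) and \(|L_n(D)|=|\widetilde L_n(D)|/3\).

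One point needs checking: membership in \(L_n(D)\) is intrinsic to the triangle. By Proposition~\ref{prop:local-recognition}, \(S\) is determined by \(T\), and the unselected digits are common to all three vertices. So the condition does not depend on the chosen representative, and \(\pi^{-1}(L_n(D))=\widetilde L_n(D)\).

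Next, count representatives coordinatewise. A selected coordinate must lie in \(\supp(b)\), so it has \(3\) choices of noncentral digit. An unselected coordinate has \(t=|D|\) choices. Summing over all \(S\), including \(S=\varnothing\), gives \((t+3)^n\) by the binomial theorem. Removing the \(t^n\) pairs with empty selection gives
\[
  |\widetilde L_n(D)|=(t+3)^n-t^n,
\]
and dividing by \(3\) yields \eqref{eq:LnD}.

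For \eqref{eq:LnD-ZJ}, substitute \(r=t+1\) into \eqref{eq:ZJtranslate}. This gives
\[
  \frac{(r+2)^n-(r-1)^n}{3}=\frac{(t+3)^n-t^n}{3},
\]
which matches \eqref{eq:LnD}. This step uses \eqref{eq:ZJ} together with the translation identity \eqref{eq:binomial}.

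The only genuinely delicate point is the intrinsic-ness check. Everything else is routine.
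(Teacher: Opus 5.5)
Your proof is correct and essentially the paper's argument: count $3^{|S|}t^{n-|S|}$ representatives for each nonempty $S$, divide by the three cyclic representatives, sum with the binomial theorem, and obtain \eqref{eq:LnD-ZJ} by setting $r=t+1$ in \eqref{eq:ZJtranslate}. Your intrinsic-ness check is harmless but not needed, since $C_3$-invariance of $\widetilde L_n(D)$ already gives $|\pi(\widetilde L_n(D))|=|\widetilde L_n(D)|/3$ by \eqref{eq:C3-invariant-subset}.
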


\begin{proof}
For fixed nonempty \(S\), selected coordinates have three possible starting
digits and unselected coordinates \(t\) choices.  Dividing
\(3^{|S|}t^{n-|S|}\) by the three cyclic representatives and summing over
\(S\ne\varnothing\) gives
\[
  \frac13\sum_{k=1}^n\binom nk3^kt^{n-k}
  =\frac{(t+3)^n-t^n}{3}.
\]
Equation \eqref{eq:LnD-ZJ} is \eqref{eq:ZJtranslate} with \(r=t+1\).
\end{proof}

The four natural choices used here are
\[
\begin{array}{c@{\qquad}c@{\qquad}l}
\toprule
t&D&L_n(D)\\
\midrule
1&\{e\}&\text{global \(\gamma\)-orbits}\\
2&\{e,f_\tau\}&Y_n(\tau)\\
3&\{i,j,k\}&L_n^S\\
4&\{i,j,k,e\}&L_n\\
\bottomrule
\end{array}
\]
and therefore
\begin{align}
 -4[ii](Z_J+2e_2)^m
   &=\frac{4^m-1}{3}=G_m,\label{eq:J2}\\
 -4[ii](Z_J+3e_2)^m
   &=\frac{5^m-2^m}{3}=|Y_m(\tau)|,\label{eq:J3}\\
 -4[ii](Z_J+4e_2)^m
   &=\frac{6^m-3^m}{3}
     =(2^m-1)3^{m-1}=|L_m^S|,\label{eq:J4}\\
 -4[ii](Z_J+5e_2)^m
   &=\frac{7^m-4^m}{3}=|L_m|.\label{eq:J5}
\end{align}
Here the geometric sequences are extended by their natural order-zero value
\(0\).  Equation \eqref{eq:J4} is precisely the fourth-binomial-transform
relation between A001045 and A016137 recorded at
\oeislink{https://oeis.org/A016137}{OEIS A016137} \cite{OEIS}.

The total equilateral-triangle count also has a compact realization:
\begin{equation}\label{eq:E-generator}
  -[ii](4Z_J+8e_2)^m
   =\frac{16^m-4^m}{12}=|E_m|.
\end{equation}
Similarly, the fixed-axis local count \eqref{eq:Aaxis-local} is generated
inside the common \(E'\)-family by
\[
  Z_A=Z(-5,2,3)=E'(-5ei+2ej+3ek),
\]
for which
\begin{equation}\label{eq:A-generator}
  -4[ij]Z_A^m=3^m-2^m=|A_m(\tau)\cap L_m|.
\end{equation}

\subsection{The two axis-supported branches}\label{app:axis-recurrence}

The geometric split of Theorem~\ref{thm:axis-supported} also separates the
axis-supported count into two elementary second-order sequences.  For the
equal-coordinate branch, extend the sequence naturally by \(a_0=0\); for
\(m\ge1\),
\[
  a_m=|E_m^{\mathrm{ax},=}|=2^m-1,
  \qquad
  a_m=3a_{m-1}-2a_{m-2}\quad(m\ge2).
\]
Taking \(p=3\), \(q=-2\), \(B=2\), and \(C=1\) in Proposition~A.1 gives
\[
  Z_{\mathrm{same}}
   =Z(-3,2,1)
   =E'(-3ei+2ej+ek),
\]
with
\begin{equation}\label{eq:Zsame}
  4[ij]Z_{\mathrm{same}}^m=2^m-1.
\end{equation}

For the zero-sum branch, use the naturally shifted sequence
\[
  b_m=|E_{m+1}^{\mathrm{ax},0}|=4^m-1,\qquad
  b_m=5b_{m-1}-4b_{m-2},
  \qquad b_0=0,\ b_1=3.
\]
Taking \(p=5\), \(q=-4\), \(B=4\), and \(C=1\) gives
\[
  Z_{\mathrm{zero}}
   =Z(-5,4,1)
   =E'(-5ei+4ej+ek),
\]
and
\begin{equation}\label{eq:Zzero}
  4[ij]Z_{\mathrm{zero}}^m=4^m-1.
\end{equation}
Consequently, for \(n\ge1\),
\[
  |E_n^{\mathrm{ax}}|
  =
  4[ij]Z_{\mathrm{same}}^n
  +
  4[ij]Z_{\mathrm{zero}}^{\,n-1}.
\]
The geometric factorization and the stars-and-bars enumeration in
Subsection~\ref{subsec:axis-supported} are the substantive ingredients here; the
order-two realizations simply record that the two resulting branch counts
lie in the common \(E'\)-family.

\subsection{Pairwise-anticommuting local cycles}\label{app:ac-recurrence}

The local pairwise anticommuting count of
Corollary~\ref{cor:local-ac-count} supplies another direct link between a
geometric class and the order-two recurrence family.  Put
\[
  a_m=|\mathrm{AC}_m\cap L_m|=\frac{7^m-1}{6},
  \qquad a_0=0,\quad a_1=1.
\]
Then
\[
  a_m=8a_{m-1}-7a_{m-2}\qquad(m\ge2).
\]
In Proposition~A.1 take \(p=8\), \(q=-7\), \(B=7\), and \(C=1\).  Thus
\[
  Z_{\mathrm{AC}}
   =Z(-8,7,1)
   =E'(-8ei+7ej+ek),
\]
and the initial-condition functional reduces to
\[
  \ell(Y)=\frac23[ij]Y.
\]
Therefore
\[
  \frac23[ij]Z_{\mathrm{AC}}^m
   =|\mathrm{AC}_m\cap L_m|
   =\frac{7^m-1}{6}.
\]

The new pairwise-anticommuting family supplies the complementary direction
to the inverse recurrence search discussed above: the geometric odd-\(|S|\)
criterion yields \((7^n-1)/6\) first, after which the second-order recurrence
identifies the natural generator \(Z_{\mathrm{AC}}\).

For reference:
\[
\begin{array}{ll}
\toprule
\text{order-two power family}&\text{coefficient extraction}\\
\midrule
Z_F^m&2[ij]Z_F^m=F_m\\
Z_J^m&-4[ii]Z_J^m=J_m\\
Z_H^m&([ee]+[kk]+3[ij])Z_H^m=H_m\\
(Z_J+2e_2)^m&-4[ii](Z_J+2e_2)^m=G_m\\
(Z_J+3e_2)^m&-4[ii](Z_J+3e_2)^m=|Y_m(\tau)|\\
(Z_J+4e_2)^m&-4[ii](Z_J+4e_2)^m=|L_m^S|\\
(Z_J+5e_2)^m&-4[ii](Z_J+5e_2)^m=|L_m|\\
(4Z_J+8e_2)^m&-[ii](4Z_J+8e_2)^m=|E_m|\\
Z_A^m&-4[ij]Z_A^m=|A_m(\tau)\cap L_m|\\
Z_{\mathrm{same}}^m&4[ij]Z_{\mathrm{same}}^m=2^m-1\\
Z_{\mathrm{zero}}^m&4[ij]Z_{\mathrm{zero}}^m=4^m-1\\
\bottomrule
\end{array}
\]

Earlier floretion/OEIS work used both individual coefficient extractions and
sums of coefficients as sequence functions; see \cite{Munafo}.  The
identities above do not assert that the geometric counts of Sections~5--9 are
caused by the order-two power sequences.  They do show that the comparison
can be used prospectively.  In the translated Jacobsthal family, the
already identified cases \(r=2,4,5\) left the intermediate sequence
\[
  \frac{5^n-2^n}{3}
\]
at \(r=3\).  Reading the construction backwards suggested a class with two
allowed unselected digits; the reflection-fixed alphabet
\(\{e,f_\tau\}\) then led to \(Y_n(\tau)\), whose count was proved
independently in Proposition~\ref{prop:Y-axis}.

This suggests a possible inverse direction for future work: recurrence
sequences generated algebraically by \(Z(A,B,C)^m\) but not presently
visible in Sections~5--9 may correspond to geometrically natural subclasses
defined by support, symmetry, completion behavior, or product-point
constraints.  Conversely, new geometric counting problems may single out
further members of the recurrence family.  No general correspondence is
claimed.

\clearpage
\section{Examples}\label{app:examples}

Readers looking for a quick feel for the triangle classes may start here and
return to the definitions as needed.  Most examples below are of order
three.  The online multiplier \cite{Calculator} operates on algebra elements,
so an unordered triangle
\[
  T=\{b_1,b_2,b_3\}
\]
is entered there as \(b_1+b_2+b_3\).  The sum is only a convenient input
format for displaying the three constituent tiles; the triangle itself
remains an unordered set.  For local cycles, selected coordinates are shown
in bold blue.

\begin{example}[A global orbit with several symmetries]
Take
\[
  T=\{\sel{i}\sel{i}\sel{i},
       \sel{j}\sel{j}\sel{j},
       \sel{k}\sel{k}\sel{k}\}
    =T(iii,\{1,2,3\}).
\]
This is a nontrivial global \(\gamma\)-orbit.  It lies in the no-\(e\)
support \(S_3\), is product-centered and multiplication-generated, and is
invariant under all three main reflections.  It is also an axis-supported
triangle in the equal-coordinate branch of Theorem~\ref{thm:axis-supported},
with axis parameters \(x=y=z=7\).  Its product point is the origin, since
\(p(T)=eee\).  It is also pairwise anticommuting, and
\[
  (iii+jjj+kkk)^2=-3eee.
\]
\noindent\emph{Calculator input:} \(iii+jjj+kkk\).
\end{example}

\begin{example}[Local but not product-centered]
Let
\[
  T=\{\sel{i}ij,\sel{j}ij,\sel{k}ij\}
    =T(iij,\{1\}).
\]
This is local by construction.  The unselected coordinate \(r=2\) is
noncentral and has one selected coordinate before it, so the product-center
parity criterion fails.  Hence \(T\notin P_3\).  Nevertheless \(|S|=1\) is
odd, so \(T\in\mathrm{AC}_3\) and
\[
  (iij+jij+kij)^2=-3eee.
\]
\noindent\emph{Calculator input:} \(iij+jij+kij\).
\end{example}

\begin{example}[Local, product-centered, and reflection-symmetric]
Take
\[
  T=\{j\sel{i}\sel{i},
       j\sel{j}\sel{j},
       j\sel{k}\sel{k}\}
    =T(jii,\{2,3\}).
\]
The selected coordinates form one terminal block, so the local parity
criterion is satisfied.  Thus
\[
  C_T=Q_T=P(jee).
\]
The transposition fixing \(j\) and exchanging \(i\) and \(k\) fixes the
middle vertex and exchanges the other two, so \(T\) is symmetric about the
\(j\)-axis.  This is the upper order-three example in Figure~1(b).
Since \(|S|=2\) is even, it is not pairwise anticommuting.
Proposition~\ref{prop:local-square-image} gives
\[
  (jii+jjj+jkk)^2=-3eee-2(eii+ejj+ekk).
\]
The non-scalar part is supported on the global \(\gamma\)-orbit of \(eii\).
\noindent\emph{Calculator input:} \(jii+jjj+jkk\).
\end{example}

\begin{example}[Nonlocal and not product-centered]
The lower order-three example in Figure~1(b) is
\[
  T=\{iee,iij,iji\}.
\]
It is nonlocal: in the second coordinate the digit triple is
\(\{e,i,j\}\), which is neither constant nor \(\{i,j,k\}\), so it fails the
digitwise recognition criterion of Proposition~\ref{prop:local-recognition}.
Its product word is \(p(T)=ikk\), and \(P(ikk)\) is not the Euclidean center.
\noindent\emph{Calculator input:} \(iee+iij+iji\).
\end{example}

\begin{example}[A nonlocal product-centered triangle]
Consider
\[
  T=\{iie,ejj,ekk\}.
\]
This triangle is nonlocal: already in the first coordinate the digit triple
is \(\{i,e,e\}\), which is neither constant nor \(\{i,j,k\}\).  Nevertheless
\[
  p(T)=iei,\qquad C_T=Q_T=P(iei),
\]
so \(T\in P_3\cap X_3\).  It is also symmetric about the \(i\)-axis: the
reflection fixing \(i\) fixes \(iie\) and exchanges \(ejj\) with \(ekk\).
Moreover, it is an axis-supported triangle in the zero-sum branch of
Theorem~\ref{thm:axis-supported}: its three axis parameters are
\(6,-3,-3\).  Thus this example also gives a concrete representative of
\(E_3^{\mathrm{ax},0}\).
\noindent\emph{Calculator input:} \(iie+ejj+ekk\).
\end{example}

\begin{example}[A nonlocal pairwise-anticommuting triangle]
The triangle
\[
  T=\{iji,jij,eek\}
\]
is the nonlocal example from Section~\ref{sec:signed}.  Its pairwise
commutation mismatch counts are \(3,1,1\), so all three pairs anticommute,
while its first coordinate \(\{i,j,e\}\) rules out locality.
\noindent\emph{Calculator input:} \(iji+jij+eek\).
\end{example}

\begin{remark}[Calculator square versus exact power]
For the coefficient-\(1\) sums in Examples B.1--B.3, the Single-tab
\texttt{power 2} preview computes the exact algebraic square.  The separate
\texttt{square} transform intentionally normalizes the current input before
squaring; consequently it displays coefficients four times as large for
these examples.  Thus B.1 and B.2 appear there as \(-12eee\), while B.3
appears as \(-12eee-8eii-8ejj-8ekk\).  The mathematical identities above
are the unnormalized coefficient-\(1\) identities.
\end{remark}

\section{Sequence index}\label{app:sequence-index}

This appendix collects the named or explicitly enumerated integer sequences
that play a structural role in the paper.  It is intended as a quick index:
the proofs and qualifications remain in the \mbox{corresponding sections} of the
paper.  Unless stated otherwise, the displayed initial terms use
\(n=1,2,\ldots\).  A dash in the OEIS column means only that no OEIS
identification is asserted in this paper; it is not a claim that no such
entry exists.  OEIS identifications are understood with the index shifts
specified in the \mbox{corresponding sections}.  Likewise, ``enumerated through
\(6\)'' records finite-order data rather than a conjectured closed formula.

\subsection*{Completion, locality, product points, and reflection}

\begingroup
\scriptsize
\setlength{\tabcolsep}{2.7pt}
\renewcommand{\arraystretch}{1.14}
\rowcolors{2}{black!2}{white}
\begin{longtable}{
  >{\raggedright\arraybackslash}p{0.14\textwidth}
  >{\raggedright\arraybackslash}p{0.22\textwidth}
  >{\raggedright\arraybackslash}p{0.18\textwidth}
  >{\raggedright\arraybackslash}p{0.27\textwidth}
  >{\centering\arraybackslash}p{0.11\textwidth}}
\caption{Completion, locality, product-point, and reflection sequences.}
\label{tab:sequence-index-main}\\
\rowcolor{orange!10}
\textbf{Name} & \textbf{Description} & \textbf{First terms} &
\textbf{Formula / status} & \textbf{OEIS}\\
\toprule
\endfirsthead
\rowcolor{orange!10}
\multicolumn{5}{c}{\tablename\ \thetable\ (continued)}\\
\rowcolor{orange!10}
\textbf{Name} & \textbf{Description} & \textbf{First terms} &
\textbf{Formula / status} & \textbf{OEIS}\\
\toprule
\endhead
\bottomrule
\endfoot

\(A_0=A_2=B_2\) &
same-orientation pairs with \(0\) or \(2\) completions; equivalently all
pairs with \(2\) completions &
\(0,\allowbreak12,\allowbreak240,\allowbreak4032,\allowbreak65280,\allowbreak1047552\) &
Exact: \(N^2(N^2-4)/16,\ N=2^n\). &
-- \\

\(A_1=B_1\) &
same-orientation pairs with one completion; equivalently all pairs with one
completion &
\(3,\allowbreak36,\allowbreak528,\allowbreak8256,\allowbreak131328,\allowbreak2098176\) &
Exact: \(N^2(N^2+2)/8\). &
\oeislink{https://oeis.org/A233196}{A233196} \\

\(B_0\) &
all unordered pairs with no completion &
\(3,\allowbreak72,\allowbreak1248,\allowbreak20352,\allowbreak327168,\allowbreak5240832\) &
Exact: \(N^2(5N^2-8)/16\). &
-- \\

\(|E_n|\) &
all equilateral centroid triangles &
\(1,\allowbreak20,\allowbreak336,\allowbreak5440,\allowbreak87296,\allowbreak1397760\) &
Exact: \(4^n(4^n-1)/12\). &
\oeislink{https://oeis.org/A166984}{A166984} \\

\(|L_n|\) &
local synchronized \(\gamma\)-cycles &
\(1,\allowbreak11,\allowbreak93,\allowbreak715,\allowbreak5261,\allowbreak37851\) &
Exact: \((7^n-4^n)/3\). &
\oeislink{https://oeis.org/A016150}{A016150} \\

\(|X_n|\) &
nonlocal equilateral triangles &
\(0,\allowbreak9,\allowbreak243,\allowbreak4725,\allowbreak82035,\allowbreak1359909\) &
Exact: \((16^n+3\cdot4^n-4\cdot7^n)/12\). &
-- \\

\(|E_n^{\mathrm{ax}}|\) &
equilateral triangles supported on the union of the three main axes &
\(1,\allowbreak6,\allowbreak22,\allowbreak78,\allowbreak286,\allowbreak1086\) &
Exact: \(4^{n-1}+2^n-2\). &
-- \\

\(|E_n^{\mathrm{ax},=}|\) &
equal-coordinate branch \(x=y=z\) of the axis-supported class &
\(1,\allowbreak3,\allowbreak7,\allowbreak15,\allowbreak31,\allowbreak63\) &
Exact: \(2^n-1\). &
\oeislink{https://oeis.org/A000225}{A000225} \\

\(|E_n^{\mathrm{ax},0}|\) &
zero-sum branch \(x+y+z=0\) of the axis-supported class &
\(0,\allowbreak3,\allowbreak15,\allowbreak63,\allowbreak255,\allowbreak1023\) &
Exact: \(4^{n-1}-1\). &
\oeislink{https://oeis.org/A024036}{A024036} (shift) \\

\(|P_n\cap L_n|\) &
product-centered local triangles &
\(1,\allowbreak8,\allowbreak57,\allowbreak373,\allowbreak2342,\allowbreak14343\) &
Exact: \((H_n-4^n)/3\), with \(H_n=5H_{n-1}+5H_{n-2}\). &
-- \\

\(|P_n|\) &
all product-centered equilateral triangles &
\(1,\allowbreak8,\allowbreak72,\allowbreak598,\allowbreak4622,\allowbreak34638\) &
Enumerated through \(n=6\); no closed formula asserted. &
-- \\

\(|P_n\cap X_n|\) &
nonlocal product-centered triangles &
\(0,\allowbreak0,\allowbreak15,\allowbreak225,\allowbreak2280,\allowbreak20295\) &
Enumerated through \(n=6\); no closed formula asserted. &
-- \\

\(|Y_n(\tau)|\) &
local triangles whose product point lies on one fixed main axis &
\(1,\allowbreak7,\allowbreak39,\allowbreak203,\allowbreak1031,\allowbreak5187\) &
Exact: \((5^n-2^n)/3\). &
\oeislink{https://oeis.org/A016127}{A016127} \\

\(|A_n(\tau)|\) &
triangles symmetric about one fixed main axis &
\(1,\allowbreak8,\allowbreak40,\allowbreak176,\allowbreak736,\allowbreak3008\) &
Exact: \(3\cdot4^{n-1}-2^n\). &
\oeislink{https://oeis.org/A165665}{A165665} \\

\(|A_n(\tau)\cap L_n|\) &
local triangles symmetric about one fixed main axis &
\(1,\allowbreak5,\allowbreak19,\allowbreak65,\allowbreak211,\allowbreak665\) &
Exact: \(3^n-2^n\). &
\oeislink{https://oeis.org/A001047}{A001047} \\

\(|A_n(\tau)\cap X_n|\) &
nonlocal triangles symmetric about one fixed main axis &
\(0,\allowbreak3,\allowbreak21,\allowbreak111,\allowbreak525,\allowbreak2343\) &
Exact: \(3\cdot4^{n-1}-3^n\). &
-- \\

\(|A_n(\tau)\cap L_n\cap P_n|\) &
product-centered local triangles symmetric about one fixed main axis &
\(1,\allowbreak4,\allowbreak13,\allowbreak39,\allowbreak112,\allowbreak313\) &
Exact: \(F_{2n+2}-2^n\). &
\oeislink{https://oeis.org/A105693}{A105693} \\

\(|A_n|\) &
triangles symmetric about at least one main axis &
\(1,\allowbreak18,\allowbreak106,\allowbreak498,\allowbreak2146,\allowbreak8898\) &
Exact: \(9\cdot4^{n-1}-5\cdot2^n+2\). &
-- \\

\(|A_n\cap L_n|\) &
local triangles symmetric about at least one main axis &
\(1,\allowbreak9,\allowbreak43,\allowbreak165,\allowbreak571,\allowbreak1869\) &
Exact: \(3^{n+1}-5\cdot2^n+2\). &
\oeislink{https://oeis.org/A281773}{A281773} \\

\(|A_n\cap X_n|\) &
nonlocal triangles symmetric about at least one main axis &
\(0,\allowbreak9,\allowbreak63,\allowbreak333,\allowbreak1575,\allowbreak7029\) &
Exact: \(9\cdot4^{n-1}-3^{n+1}\). &
-- \\

\(|A_n\cap L_n\cap P_n|\) &
product-centered local triangles with a main-axis symmetry &
\(1,\allowbreak6,\allowbreak25,\allowbreak87,\allowbreak274,\allowbreak813\) &
Exact: \(3F_{2n+2}-5\cdot2^n+2\). &
-- \\

\(|A_n\cap P_n|\) &
product-centered triangles with a main-axis symmetry &
\(1,\allowbreak6,\allowbreak34,\allowbreak138,\allowbreak490,\allowbreak1626\) &
Enumerated through \(n=6\); no closed formula asserted. &
-- \\

\(|A_n\cap X_n\cap P_n|\) &
nonlocal product-centered triangles with a main-axis symmetry &
\(0,\allowbreak0,\allowbreak9,\allowbreak51,\allowbreak216,\allowbreak813\) &
Enumerated through \(n=6\); no closed formula asserted. &
-- \\

\end{longtable}
\endgroup

\subsection*{No-\texorpdfstring{\(e\)}{e} support, multiplication generation, and anticommutation}

\begingroup
\scriptsize
\setlength{\tabcolsep}{2.7pt}
\renewcommand{\arraystretch}{1.14}
\rowcolors{2}{black!2}{white}
\begin{longtable}{
  >{\raggedright\arraybackslash}p{0.16\textwidth}
  >{\raggedright\arraybackslash}p{0.21\textwidth}
  >{\raggedright\arraybackslash}p{0.18\textwidth}
  >{\raggedright\arraybackslash}p{0.26\textwidth}
  >{\centering\arraybackslash}p{0.11\textwidth}}
\caption{No-\texorpdfstring{\(e\)}{e}, multiplication-generated, and anticommutation sequences.}
\label{tab:sequence-index-signed}\\
\rowcolor{orange!10}
\textbf{Name} & \textbf{Description} & \textbf{First terms} &
\textbf{Formula / status} & \textbf{OEIS}\\
\toprule
\endfirsthead
\rowcolor{orange!10}
\multicolumn{5}{c}{\tablename\ \thetable\ (continued)}\\
\rowcolor{orange!10}
\textbf{Name} & \textbf{Description} & \textbf{First terms} &
\textbf{Formula / status} & \textbf{OEIS}\\
\toprule
\endhead
\bottomrule
\endfoot

\(|L_n^S|=|E_n^S|\) &
equilateral triangles on the no-\(e\) support \(S_n\) &
\(1,\allowbreak9,\allowbreak63,\allowbreak405,\allowbreak2511,\allowbreak15309\) &
Exact: \((2^n-1)3^{n-1}\). &
\oeislink{https://oeis.org/A016137}{A016137} \\

\(C_n^S\) &
product-centered local triangles in \(S_n\) &
\(1,\allowbreak6,\allowbreak36,\allowbreak189,\allowbreak972,\allowbreak4860\) &
Exact: \((F_{n+2}-1)3^{n-1}\). &
\oeislink{https://oeis.org/A397105}{A397105} \\

\(G_n=|\mathcal M_n\cap L_n|\) &
local multiplication-generated triangles; nontrivial global \(\gamma\)-orbits &
\(1,\allowbreak5,\allowbreak21,\allowbreak85,\allowbreak341,\allowbreak1365\) &
Exact: \((4^n-1)/3\). &
\oeislink{https://oeis.org/A002450}{A002450} \\

\(G_+(n)\) &
even-\(\nu\) commuting part of the global-\(\gamma\) family &
\(0,\allowbreak3,\allowbreak9,\allowbreak45,\allowbreak165,\allowbreak693\) &
Exact: \((4^n+(-2)^n-2)/6\). &
-- \\

\(G_{\mathrm{anti}}(n)
 =|\mathcal M_n\cap L_n\cap\mathrm{AC}_n|\) &
pairwise-anticommuting part of the local multiplication-generated family &
\(1,\allowbreak2,\allowbreak12,\allowbreak40,\allowbreak176,\allowbreak672\) &
Exact: \((4^n-(-2)^n)/6=2^{n-1}J_n\). &
\oeislink{https://oeis.org/A003683}{A003683} \\

\(|\mathcal M_n|\) &
all multiplication-generated equilateral triangles &
\(1,\allowbreak5,\allowbreak21,\allowbreak85,\allowbreak341,\allowbreak1365\) &
Enumerated through \(n=6\); equality with \(G_n\) is open in general. &
-- \\

\(C_+\) in Table~\ref{tab:mult} &
positive commuting multiplication-generated column &
\(0,\allowbreak3,\allowbreak9,\allowbreak45,\allowbreak165,\allowbreak693\) &
Enumerated through \(n=6\). &
-- \\

\(C_-\) in Table~\ref{tab:mult} &
negative commuting multiplication-generated column &
\(0,\allowbreak0,\allowbreak0,\allowbreak0,\allowbreak0,\allowbreak0\) &
Enumerated through \(n=6\). &
-- \\

\(|\mathcal M_n\cap\mathrm{AC}_n|\) &
pairwise-anticommuting multiplication-generated triangles &
\(1,\allowbreak2,\allowbreak12,\allowbreak40,\allowbreak176,\allowbreak672\) &
Enumerated through \(n=6\); matches \(G_{\mathrm{anti}}(n)\) there. &
-- \\

\(|\mathcal M_n\cap X_n|\) &
nonlocal multiplication-generated triangles &
\(0,\allowbreak0,\allowbreak0,\allowbreak0,\allowbreak0,\allowbreak0\) &
Enumerated through \(n=6\); vanishing is open in general. &
-- \\

\(|\mathrm{AC}_n|\) &
all pairwise-anticommuting equilateral triangles &
\(1,\allowbreak8,\allowbreak96,\allowbreak928,\allowbreak13472,\allowbreak183936\) &
Enumerated through \(n=6\); no closed formula asserted; see
Remark~\ref{rem:ac-order2}. &
-- \\

\(|\mathrm{AC}_n\cap L_n|\) &
local pairwise-anticommuting triangles &
\(1,\allowbreak8,\allowbreak57,\allowbreak400,\allowbreak2801,\allowbreak19608\) &
Exact: \((7^n-1)/6\). &
\oeislink{https://oeis.org/A023000}{A023000} \\

\(|\mathrm{AC}_n\cap X_n|\) &
nonlocal pairwise-anticommuting triangles &
\(0,\allowbreak0,\allowbreak39,\allowbreak528,\allowbreak10671,\allowbreak164328\) &
Enumerated through \(n=6\); no closed formula asserted; see
Remark~\ref{rem:ac-order2}. &
-- \\

\end{longtable}
\endgroup

\subsection*{Auxiliary and classical sequences}

\begingroup
\scriptsize
\setlength{\tabcolsep}{3pt}
\renewcommand{\arraystretch}{1.14}
\rowcolors{2}{black!2}{white}
\begin{longtable}{
  >{\raggedright\arraybackslash}p{0.13\textwidth}
  >{\raggedright\arraybackslash}p{0.23\textwidth}
  >{\raggedright\arraybackslash}p{0.18\textwidth}
  >{\raggedright\arraybackslash}p{0.25\textwidth}
  >{\centering\arraybackslash}p{0.11\textwidth}}
\caption{Auxiliary and classical sequences used in the paper.}
\label{tab:sequence-index-aux}\\
\rowcolor{orange!10}
\textbf{Name} & \textbf{Role} & \textbf{First terms} &
\textbf{Definition} & \textbf{OEIS}\\
\toprule
\endfirsthead
\rowcolor{orange!10}
\multicolumn{5}{c}{\tablename\ \thetable\ (continued)}\\
\rowcolor{orange!10}
\textbf{Name} & \textbf{Role} & \textbf{First terms} &
\textbf{Definition} & \textbf{OEIS}\\
\toprule
\endhead
\bottomrule
\endfoot

\(t_k=\binom{k+2}{4}\) &
all equilateral triangles in the \(k\)-row triangular point set \(T_k\) &
\(0,\allowbreak1,\allowbreak5,\allowbreak15,\allowbreak35,\allowbreak70,\ldots\)
for \(k=1,2,\ldots\) &
Classical triangular-grid count used in Remark~4.5. &
\oeislink{https://oeis.org/A000332}{A000332} (shift) \\

\(F_n\) &
Fibonacci counts in Sections~6--8 and reflection symmetry &
\(0,\allowbreak1,\allowbreak1,\allowbreak2,\allowbreak3,\allowbreak5,\allowbreak8,\ldots\) &
\(F_0=0,\ F_1=1,\ F_n=F_{n-1}+F_{n-2}\). &
\oeislink{https://oeis.org/A000045}{A000045} \\

\(J_n\) &
Jacobsthal sequence in Appendix~A and \(G_{\mathrm{anti}}(n)=2^{n-1}J_n\) &
\(0,\allowbreak1,\allowbreak1,\allowbreak3,\allowbreak5,\allowbreak11,\allowbreak21,\ldots\) &
\(J_0=0,\ J_1=1,\ J_n=J_{n-1}+2J_{n-2}\). &
\oeislink{https://oeis.org/A001045}{A001045} \\

\(H_n\) &
two-state recurrence controlling \(|P_n\cap L_n|\) &
\(1,\allowbreak7,\allowbreak40,\allowbreak235,\allowbreak1375,\allowbreak8050,\allowbreak47125,\ldots\) &
\(H_0=1,\ H_1=7,\ H_n=5H_{n-1}+5H_{n-2}\). &
-- \\

\end{longtable}
\endgroup

\section*{Statements and Declarations}

\noindent\textbf{Funding.}
The author received no financial support for the research, authorship, or
publication of this article.

\medskip
\noindent\textbf{Conflict of interest.}
The author declares that he has no conflict of interest.

\medskip
\noindent\textbf{Use of generative AI.}
During the preparation and revision of this manuscript, the author used
generative AI tools for language editing, structural suggestions, and
exploratory mathematical discussion, including searching for relevant
literature.  All suggestions incorporated into the present manuscript were
critically assessed and mathematically worked through by the author.  The
author takes full responsibility for the present content of the manuscript.

\end{document}